
\documentclass[11pt]{amsart}

\usepackage{verbatim, amssymb,hyperref}

\usepackage{color}
\usepackage{bbm}
\usepackage{graphicx}
\usepackage{subcaption}
\usepackage{float}

\setlength{\textwidth}{16.0cm}
\setlength{\textheight}{22.5cm}
\setlength{\hoffset}{-2cm}
\setlength{\voffset}{-0.4cm}
\addtolength{\headheight}{3.5pt}
\frenchspacing

\newcommand{\red}{\color[rgb]{1,0,0}}
\newcommand{\blue}{\color[rgb]{0,0,1}}

\newcommand{\orange}{\color[rgb]{0.83,0.38,0.22}}

\newcommand{\eps}{\varepsilon}


\newcommand{\cF}{\mathcal F}

\newcommand{\cD}{\mathcal D}

\newcommand{\cO}{\mathcal O}

\newcommand{\grad}{\nabla}

\newcommand{\sfrac}[2]{\mbox{$\frac{#1}{#2}$}}

\newcommand{\loja}{\L ojasiewicz}

\newcommand{\IB}{\mathbb B}


\newcommand{\1}{1\hspace{-0.098cm}\mathrm{l}}

\renewcommand{\P}{{\mathbb P}}

\newcommand{\N}{{\mathbb N}}
\newcommand{\IA}{{\mathbb A}}

\newcommand{\E}{{\mathbb E}}

\newcommand{\R}{{\mathbb R}}

\newcommand{\Hess}{\text{Hess}\,}

\setcounter{secnumdepth}{2}

\theoremstyle{plain}
\newtheorem{theorem}{Theorem}[section]
\newtheorem{proposition}[theorem]{Proposition}
\newtheorem{lemma}[theorem]{Lemma}
\newtheorem{corollary}[theorem]{Corollary}


\theoremstyle{definition}
\newtheorem{remark}[theorem]{Remark}

\title[Convergence rates for momentum stochastic gradient descent]%
	{Exponential convergence rates for momentum stochastic gradient descent in the overparametrized setting}

\author[]
{Benjamin Gess}
\address{Benjamin Gess\\
	Fakult\"at f\"ur Mathematik\\
	Universit\"at Bielefeld\\
	Universit\"atsstrasse 25\\
	33615 Bielefeld\\
	Germany, 
	Max Planck Institute for Mathematics in the Sciences\\
	Inselstrasse 22\\
	04103 Leipzig\\ Germany}
\email{bgess@math.uni-bielefeld.de}
	
\author[]
{Sebastian Kassing}
\address{Sebastian Kassing\\
	Fakult\"at f\"ur Mathematik\\
	Universit\"at Bielefeld\\
	Universit\"atsstrasse 25\\
	33615 Bielefeld\\
	Germany}
\email{skassing@math.uni-bielefeld.de}
	
		\keywords{Momentum stochastic gradient descent; \loja-inequality; almost sure convergence; overparametrization; damping}
	\subjclass[2020]{Primary 90C15; Secondary 68T07, 90C26, 62L20}

%

\begin{document}

\begin{abstract}
	We prove explicit bounds on the exponential rate of convergence for the momentum stochastic gradient descent scheme (MSGD) for arbitrary, fixed hyperparameters (learning rate, friction parameter) and its continuous-in-time counterpart in the context of non-convex optimization. In the small step-size regime and in the case of flat minima or large noise intensities, these bounds prove faster convergence of MSGD compared to plain stochastic gradient descent (SGD). The results are shown for objective functions satisfying a local Polyak-\L ojasiewicz inequality and under assumptions on the variance of MSGD that are satisfied in overparametrized settings. Moreover, we analyze the optimal choice of the friction parameter and show that the MSGD process almost surely converges to a local minimum.
\end{abstract}

\maketitle

\section{Introduction}
Many machine learning tasks involve the minimization of a function $f:\R^d \to \R$ given as an expectation $f(x)=\E[g(x,\Gamma)]$ for a random variable $\Gamma$ and a non-negative loss $g$. For example, in supervised learning one aims to minimize the average loss over a fixed training data set. 
In practice, the large size of the employed data sets requires the use of stochastic optimization methods, such as stochastic gradient descent (SGD). Such methods use random approximations of the gradient $\nabla f(x)$ for each iteration, e.g. through i.i.d.\ samples of $\nabla g(x,\Gamma)$. 

A second main challenge for the theoretical analysis of stochastic optimization algorithms in machine learning is the non-convexity of the loss landscape. In particular, often objective functions in supervised learning using neural networks possess rich, non-discrete sets of global minima, see e.g. \cite{cooper2018loss, fehrman2020convergence, dereich2022minimal}. 

Empirical observations~\cite{sutskever2013importance, gadat2018stochastic, sebbouh2021almost} motivate the long-standing conjecture that including momentum improves the performance of stochastic optimization algorithms. In recent years, a large class of  optimization algorithms has been proposed using combinations of various variants of momentum with other techniques such as adaptive step-sizes, preconditioning and batch-normalization \cite{nesterov1983method, QIAN1999145, duchi2011adaptive, kingma2014adam}. However, there are only few theoretical results proving the advantage of these methods. In fact, known results are restricted either to deterministic and continuous-in-time systems \cite{polyak1964some, aujol2022convergence, aujol2022convergence1, apidopoulos2022convergence}, or to deterministic systems with strongly convex objective functions~\cite{polyak1964some, ghadimi2015global}. For stochastic momentum algorithms, the available literature is bounded to qualitative statements~\cite{gadat2018stochastic, liu2023almost} and recovering the convergence rates found for SGD in the convex setting~\cite{gadat2018stochastic, sebbouh2021almost}.
This poses as an open problem the derivation of explicit bounds on the rate of convergence for time-discrete momentum stochastic gradient descent (MSGD) in a non-convex loss landscape, as it is met in machine learning. This problem is solved in the present work.

More precisely, we consider the MSGD algorithm 
\begin{align} \begin{split}\label{eq:MSGDapp}
		X_{n+1} &= X_n + \gamma_{n+1}V_{n+1}, \\
		V_{n+1}&= V_n - \gamma_{n+1}\mu V_n -\gamma_{n+1}\nabla g(X_n,\Gamma_{n+1}),
	\end{split}
\end{align}
for starting values $X_0,V_0 \in \R^d$, a sequence of strictly positive reals $(\gamma_n)_{n \in \N}$, a friction parameter $\mu>0$ and an i.i.d. sequence $(\Gamma_n)_{n \in \N}$
and derive explicit bounds on the exponential rate of convergence of  $(f(X_n))_{n \in \N_0}$.
 In the small step-size regime, these results rigorously justify the conjecture that the inclusion of momentum accelerates the convergence compared to SGD~\cite{wojtowytsch2021stochastic} for flat minima in overparametrized settings, that is, if $\min_{x \in \R^d} f(x)=0$.\footnote{See Section~\ref{sec:MLintro} for a discussion in the case of supervised learning}

In fact, we treat more general situations, including \eqref{eq:MSGDapp} as a special case: 
We assume throughout that $f: \R^d \to \R$ is a differentiable function with $C_L$-Lipschitz continuous gradient,\footnote{We comment on the necessity of this global Lipschitz continuity in Remark~\ref{rem:Lip1} and Remark~\ref{rem:Lip2} below.} for some constant $C_L \ge 0$, such that $\inf_{x \in \R^d} f(x)=0$. 
Let $(\Omega, (\mathcal F_{n})_{n \in \N_0}, \mathcal F, \P)$ be a filtered probability space and let $(X_n)_{n \in \N_0}$, $(V_n)_{n \in \N_0}$ be $(\mathcal F_n)_{n \in \N_0}$-adapted processes satisfying for all $n \in \N_0$
\begin{align}\begin{split} \label{eq:MSGDintro}
		X_{n+1} &= X_n + \gamma_{n+1}V_{n+1}, \\
		V_{n+1}&= V_n - \gamma_{n+1}\mu V_n -\gamma_{n+1}\nabla f(X_n) + \gamma_{n+1} D_{n+1},
	\end{split}
\end{align}
where $X_0, V_0 \in L^2(\Omega, \cF_0)$, $(\gamma_{n})_{n \in \N}$ is a sequence of strictly positive reals, $\mu>0$ and $(D_n)_{n \in \N}$ is a sequence of $L^2$-martingale differences with respect to the filtration $(\cF_n)_{n \in \N_0}$. In the following, we also call $(X_n)_{n \in \N_0}$ given by \eqref{eq:MSGDintro} the MSGD scheme with step-sizes $(\gamma_{n})_{n \in \N}$ and friction parameter $\mu$. The choice $(D_n)_{n \in \N}=(\nabla  f(X_{n-1})-\nabla g(X_{n-1},\Gamma_{n}))_{n \in \N}$ recovers the algorithm \eqref{eq:MSGDapp}.

	We state a simplified version of the main result in the case of constant step-sizes.
	
	\begin{theorem} \label{thm:intro1} (See Theorem~\ref{theo1} and Theorem~\ref{rem:constrained1}) Let $\gamma_n \equiv \gamma>0$. Let $L>0$ and $\sigma \ge 0$. Let $\mathcal D\subset \R^d$ be an open set and assume that for all $x \in \mathcal D$
		\begin{align} \label{eq:PLintro}
			|\grad f(x)|^2 \ge 2L f(x).
		\end{align}
		Moreover, for $n \in \N_0$, let
		$
		\IA_n = \{X_i \in \mathcal D \text{ for all } i =0, \dots, n\}
		$
		and assume that
		\begin{align} \label{eq:assunoise}
			\E[|D_{n+1}|^2 | \cF_n] \le \sigma f(X_n), \quad \text{ on } \IA_{n}.
		\end{align}
		If there exist parameters $a,b \ge 0$ such that all of the inequalities in \eqref{eq:constrained} are satisfied then:
		\begin{enumerate}
			\item[(i)] For all $\eps>0$ one has
			\begin{align*}
				\E[\1_{\IA_{n-1}} f(X_{n})] = o((r_{\operatorname{MSGD}}-\eps)^{-n}),
			\end{align*}
			where $r_{\operatorname{MSGD}}:= \min(1+a\gamma, \delta^{-1})$ and $\delta$ is given by \eqref{eq:constrained}.
			\item[(ii)] If $\delta<1$, the process $(X_n)_{n \in \N_0}$ converges almost surely on $\IA_\infty:= \bigcap_{n \in \N_0} \IA_n$.
		\end{enumerate}
		Moreover, for fixed $\mu$ and sufficiently small $\gamma$, there exist constants $a,b$ such that the above assumptions are satisfied and $\delta < 1$.
	\end{theorem}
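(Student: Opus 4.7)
The plan is to construct a quadratic Lyapunov function on the joint state $(X_n,V_n)$ that couples the objective value with the kinetic energy, and to establish a one-step contraction for this Lyapunov function on the event $\IA_n$. Motivated by the dissipative structure of the damped Hamiltonian system underlying MSGD, a natural candidate is
\[
\Phi_n := f(X_n) + \alpha\,|V_n|^2 + \beta\,\scp{V_n}{\nabla f(X_n)},
\]
where the weights $\alpha,\beta\ge 0$ are built out of the parameters $a,b$ appearing in the constraints \eqref{eq:constrained}. These weights must be chosen so that $\Phi_n$ is equivalent to $f(X_n)+|V_n|^2$ (in particular $\Phi_n\ge c\,f(X_n)$), while the cross term is what injects a dissipation term proportional to $|\nabla f(X_n)|^2$, and hence, via the PL inequality, to $f(X_n)$ itself, into the one-step decrement.

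Next I would carry out a one-step conditional estimate on $\IA_n$. The $C_L$-Lipschitz property of $\nabla f$ gives
\[
f(X_{n+1}) \le f(X_n) + \gamma\,\scp{\nabla f(X_n)}{V_{n+1}} + \tfrac{C_L\gamma^2}{2}|V_{n+1}|^2,
\]
and the recursion $V_{n+1}=(1-\gamma\mu)V_n-\gamma\nabla f(X_n)+\gamma D_{n+1}$ together with the martingale-difference property $\E[D_{n+1}\mid\cF_n]=0$ yields clean expressions for $\E[V_{n+1}\mid\cF_n]$ and $\E[|V_{n+1}|^2\mid\cF_n]$. Crucially, the noise enters only through the term $\gamma^2\E[|D_{n+1}|^2\mid\cF_n]\le \gamma^2\sigma f(X_n)$, which is itself controlled by $f(X_n)$. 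The cross term is treated analogously, using the Lipschitz bound on $\nabla f$ to replace $\nabla f(X_{n+1})$ by $\nabla f(X_n)$ up to an $O(\gamma)$ remainder. Collecting everything produces a conditional quadratic form in $(f(X_n),|V_n|^2,\scp{V_n}{\nabla f(X_n)})$, and the inequalities in \eqref{eq:constrained} would be precisely those guaranteeing that this quadratic form is dominated by $\delta\,\Phi_n$, so that
\[
\E[\1_{\IA_n}\Phi_{n+1}\mid\cF_n]\le\delta\,\1_{\IA_n}\Phi_n.
\]

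Iterating this contraction gives $\E[\1_{\IA_{n-1}}\Phi_n]\le \delta^n\,\E[\Phi_0]$, and the equivalence $\Phi_n\ge c\,f(X_n)$ then yields the $\delta^{-n}$ part of the rate. The second factor $(1+a\gamma)^{-n}$ would come from a companion estimate showing that $(1+a\gamma)^n\,\Phi_n\,\1_{\IA_{n-1}}$ is itself a non-negative supermartingale, equivalently that $(1+a\gamma)\,\E[\1_{\IA_n}\Phi_{n+1}\mid\cF_n]\le \1_{\IA_n}\Phi_n$; whichever of the two rates is smaller then gives $r_{\operatorname{MSGD}}=\min(1+a\gamma,\delta^{-1})$, and the little-$o$ refinement follows by replacing the base with an arbitrarily smaller one. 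For the almost sure statement (ii), when $\delta<1$ the bound $\E[\1_{\IA_{n-1}}\Phi_n]\le C\delta^n$ combined with Cauchy-Schwarz yields $\sum_n\E[\1_{\IA_{n-1}}|V_n|]<\infty$, hence $\sum_n|V_n|\,\1_{\IA_{n-1}}<\infty$ almost surely, and $X_{n+1}-X_n=\gamma V_{n+1}$ then gives a.s. convergence of $(X_n)$ on $\IA_\infty$.

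Finally, the existence of admissible $(a,b)$ with $\delta<1$ for fixed $\mu$ and small $\gamma$ is handled by a perturbation argument: the continuous-time damped gradient flow $\ddot{x}+\mu\dot{x}+\nabla f(x)=0$ admits, under the PL inequality, a Lyapunov function decaying exponentially, so the discrete constraints \eqref{eq:constrained} reduce to their continuous-time counterparts up to $O(\gamma)$ corrections, and any choice of $(a,b)$ working in the continuous limit survives for small enough $\gamma$ with $\delta$ strictly below $1$. I expect the main technical obstacle to be the bookkeeping in the one-step estimate: tracking the $\gamma^2$ and $\gamma^3$ remainders that arise from the Taylor expansions of $f$ and $\nabla f$ and from the noise, while still extracting a sharp enough quadratic form to make the constraints \eqref{eq:constrained} useful and to recover the optimal dependence on $\mu$ in the rate.
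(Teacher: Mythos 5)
Your Lyapunov candidate $\Phi_n = f(X_n) + \alpha |V_n|^2 + \beta \scp{V_n}{\nabla f(X_n)}$ is, up to normalization, exactly the paper's $E_n = a f(X_n) + \scp{\nabla f(X_n)}{V_n} + \tfrac b2 |V_n|^2$, the one-step Taylor bookkeeping you describe is the right engine, and the positivity discussion matches the role of the constraint $ab \ge C_L$ (via Lemma~\ref{lem:Epositive}). The almost-sure convergence argument via Cauchy--Schwarz and $\sum_n \gamma \E[\1_{\IA_{n-1}}|V_n|]<\infty$ also matches. However, there is a genuine gap in how you account for the combined rate $r_{\operatorname{MSGD}} = \min(1+a\gamma, \delta^{-1})$.

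You propose to obtain the two factors from two contraction inequalities on the same Lyapunov function, namely $\E[\1_{\IA_n}\Phi_{n+1}\mid\cF_n]\le\delta\,\1_{\IA_n}\Phi_n$ and $(1+a\gamma)\,\E[\1_{\IA_n}\Phi_{n+1}\mid\cF_n]\le\1_{\IA_n}\Phi_n$, and then to take the \emph{smaller} rate. This cannot be right as stated: if both contractions held, one would conclude $\E[\Phi_n]\le\min\bigl(\delta,(1+a\gamma)^{-1}\bigr)^n\E[\Phi_0]$, i.e.\ the \emph{faster} of the two rates, not the slower. More importantly, the constraints in \eqref{eq:constrained} do not provide two independent contractions for $\Phi$. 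The actual mechanism (Proposition~\ref{prop:SHB}) is two-layered: (a) a contraction $\E[\1_{\IA_n}E_{n+1}]\le\delta\,\E[\1_{\IA_n}E_n]$ for the Lyapunov process, and (b) a \emph{separate} recursion for $f(X_n)$ itself, obtained by Taylor expanding $f(X_{n+1})$ and substituting $\scp{\nabla f(X_{n+1})}{V_{n+1}} = E_{n+1} - a f(X_{n+1}) - \tfrac b2 |V_{n+1}|^2$, which yields
\[
(1+a\gamma)\,\E[\1_{\IA_n} f(X_{n+1})] \le \E[\1_{\IA_n} f(X_n)] + \gamma\,\E[\1_{\IA_n} E_{n+1}] + \bigl(\tfrac{C_L}{2}\gamma^2-\tfrac b2 \gamma\bigr)\E[\1_{\IA_n}|V_{n+1}|^2].
\]
The last constraint $\gamma\le b/C_L$ makes the final term non-positive. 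This is a linear recursion for $\E[\1_{\IA_{n-1}}f(X_n)]$ with homogeneous rate $(1+a\gamma)^{-1}$ and a forcing term of size $\delta^n\E[E_0]$; its solution decays at the \emph{slower} of the two rates (with the $n\cdot(1+a\gamma)^{-n}$ correction at resonance), which is precisely why $r_{\operatorname{MSGD}}=\min(1+a\gamma,\delta^{-1})$. In short, the coefficient $a$ in the Lyapunov function plays a dual role: it sets the damping in $\delta$ and simultaneously produces the $(1+a\gamma)$ prefactor in the $f$-recursion. Your sketch collapses these two roles into one and thereby misattributes the source of the second factor. (A minor additional point: the equivalence $\Phi_n\ge c\,f(X_n)$ with $c>0$ needs $ab>C_L$ strictly; the constraint set permits $ab=C_L$, where only non-negativity survives, which is another reason the $f$-recursion, rather than a direct comparison $\Phi_n\gtrsim f(X_n)$, is used.)
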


Theorem~\ref{thm:intro1} provides a localized analysis of the rate of convergence for MSGD under two main assumptions: First, instead of a convexity assumption, we work with the local gradient inequality \eqref{eq:PLintro} which is often referred to as Polyak-\L ojasiewicz inequality (PL-inequality). Second, we assume that the variance of the stochastic perturbation vanishes as the process approaches a critical point. Section~\ref{sec:MLintro} below demonstrates that these assumptions are satisfied in overparametrized supervised learning.

	Note that the present setup is fundamentally different from other recent contributions~\cite{lessard2016analysis, yue2023lower, goujaud2023provable}. Theoretical results in optimization often compare the rate of convergence for the optimally chosen hyperparameters.
	It may be argued that in practice, an optimal choice of hyperparameters is impossible, since the problem parameters $L,C_L$ and $\sigma$ are unknown.
	Motivated from this we analyze MSGD for fixed hyperparameters. 
	In Remark~\ref{rem:SGDcomparison} below, we analyze the rigorous rates of convergence found in Theorem~\ref{thm:intro1} in a regime of step-sizes that is typically chosen as a default value.
	In order to ensure the robustness of the optimization, the step-size is often chosen to be small. Accordingly, we lay-out our findings in the small step-size regime and compare the convergence rate of MSGD derived in Theorem~\ref{thm:intro1} with the convergence rates for SGD.

Since the assumptions \eqref{eq:PLintro} and \eqref{eq:assunoise} are only assumed to hold locally, the convergence rates are conditioned on the event that the optimization dynamics stay inside $\mathcal D$. However, the estimates obtained in Theorem~\ref{thm:intro1} can be used to bound the probability of leaving this domain under the assumption that MSGD is initialized close to a critical point and with small initial velocity, see Corollary~\ref{cor:prob1}. Moreover, on the set $\IA_\infty = \bigcap_{n \in \N} \IA_n$ almost sure exponential convergence  of the objective function value to zero and of $(X_n)_{n \in \N}$ to a critical point is shown in Theorem~\ref{theo1}.

In contrast to qualitative convergence results, the derivation of  explicit bounds on the rate of convergence requires the careful selection of a suitable Lyapunov function, see \eqref{eq:Lyapunov} below, and the constrained optimization over hyperparameters, such as the friction parameter $\mu$, and additional technical parameters defining the Lyapunov function, see Lemma~\ref{lem:49378}. In addition, the localization of the assumptions in Theorem~\ref{theo1} relies on a detailed control of the event of leaving the domain $\mathcal D$, see e.g. \eqref{eq:2776362562} and Lemma~\ref{lem:Epositive}.

In the second part of this article, we investigate the continuous-in-time counterpart of the MSGD method. 
Assume that, additionally, $f$ is twice continuously differentiable and let $\Sigma: \R^d \to \R^{d \times d'}$ be a Lipschitz continuous function. Let $(\Omega, (\cF_t)_{t \ge 0}, \cF, \P)$ be a filtered probability space satisfying the usual conditions and consider the following system of SDEs
\begin{align}
	\begin{split} \label{eq:SDEintro}
		dX_t &= V_t \, dt,\\
		dV_t &= - (\mu V_t + \grad f(X_t) ) \, d t +  \Sigma(X_t) \, d W_t,
	\end{split}
\end{align}
where $V_0,X_0 \in L^4(\Omega, \cF_0)$, $\mu >0$ and $(W_t)_{t \ge 0}$ is a standard $\R^{d'}$-valued $(\cF_t)_{t \ge 0}$-Brownian motion.

The Lipschitz continuity of $\grad f$ and $\Sigma$ imply that there exists a unique continuous $\R^{2d}$-valued semimartingale $(X_t,V_t)_{t \ge 0}$ satisfying \eqref{eq:SDEintro}. Moreover, for all $T \ge 0$ there exists a constant $C>0$ such that
$
\E[\sup_{t \in [0,T]} (|X_t|^4 + |V_t|^4)] < C(1+ \E[|X_0|^4 + |V_0|^4]),
$
see e.g. Theorem~19 in \cite{li2019stochastic}, so that $\grad f(X_t) \in L^4(\Omega)$ and $f(X_t) \in L^2(\Omega)$, for all $t \ge 0$. We show the exponential convergence of $(f(X_t))_{t \ge 0}$ for an objective function $f$ that satisfies the PL-condition in an open set $\mathcal D$. For a properly chosen friction parameter $\mu$, we estimate the influence of the fluctuations on the optimal rate of convergence, and compare to the one derived for the heavy-ball ODE in \cite{apidopoulos2022convergence}. For a comparison of the convergence rate for the system \eqref{eq:SDEintro} and a continuous-in-time version of SGD \eqref{eq:SDESGD} we refer the reader to Remark~\ref{rem:compSDE}.

\begin{theorem}(See Theorem~\ref{theoSDE2}) \label{theointro}
	Let $L>0$, $C_L^* = C_L \vee \frac 98 L$ and $0<\sigma<4 \frac{L}{\sqrt{C_L^*}}$. Let $\mathcal D \subset \R^d$ be an open set such that for all $x \in \mathcal D$
	\begin{align*}
		|\grad f(x)|^2 \ge 2L f(x) \quad \text{ and } \quad  \|\Sigma(x)\|_F^2 \le \sigma f(x).
	\end{align*}
	and choose
	$$
	\mu = 2\sqrt{C_L^*} - \sqrt{C_L^* - L + \frac 14 \sqrt{C_L^*} \sigma}.
	$$
	Then, there exists a $C\ge 0$ such that
	$$
	\E[\1_{\{T>t\}} f(X_t)]\le C \exp(-mt), \quad \text{ for all } t\ge 0,
	$$
	where $T=\inf\{t \ge 0: X_t \notin \mathcal D\}$ and
	$$
	m= 2\Biggl(\sqrt{C_L^*} -\sqrt{C_L^*-L+\frac 14 \sqrt{C_L^*}\sigma}\Biggr).
	$$
\end{theorem}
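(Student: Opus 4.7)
The plan is to exhibit a Lyapunov function of the form
\[
\cE(x,v) = A f(x) + B|v|^2 + C \scp{v}{\grad f(x)}
\]
with constants $A, B > 0$ and $C \in \R$ tuned so that (i) $\cE(x,v) \ge c_1 f(x)$ whenever $x \in \cD$ for some $c_1 > 0$, and (ii) the infinitesimal drift of $\cE(X_t, V_t)$ computed via It\^o's formula is pointwise bounded by $-m \cE(X_t, V_t)$ on $\{t < T\}$. Granting (i) and (ii), a standard stopping/Gronwall argument applied to $\cE(X_{t \wedge T}, V_{t \wedge T})$ yields $\E[\cE(X_{t\wedge T}, V_{t\wedge T})] \le \cE(X_0, V_0) e^{-mt}$, from which the target inequality $\E[\1_{\{T > t\}} f(X_t)] \le C e^{-mt}$ follows immediately via coercivity.

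The first main step is It\^o. Substituting $dX_t = V_t\, dt$ and $dV_t = -(\mu V_t + \grad f(X_t))\,dt + \Sigma(X_t)\,dW_t$ into $d\cE(X_t,V_t)$ and collecting terms, the drift reads
\[
(A - 2B)\scp{V_t}{\grad f(X_t)} - 2B\mu |V_t|^2 - C\mu \scp{V_t}{\grad f(X_t)} - C|\grad f(X_t)|^2 + C V_t^\tp \Hess f(X_t) V_t + B\|\Sigma(X_t)\|_F^2.
\]
Setting $A = 2B$ kills the first term. On $\{X_t \in \cD\}$ I would then apply the PL-inequality $|\grad f|^2 \ge 2Lf$, the global descent bound $|\grad f|^2 \le 2C_L^* f$ (valid since $\inf f = 0$ and $\grad f$ is $C_L^*$-Lipschitz), the Hessian bound $V_t^\tp \Hess f(X_t) V_t \le C_L^* |V_t|^2$, and the noise hypothesis $\|\Sigma\|_F^2 \le \sigma f$, reducing the drift to a quadratic form in $|V_t|^2$, $\scp{V_t}{\grad f(X_t)}$ and $f(X_t)$.

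With the drift in this reduced form, the algebraic core is to choose $B, C, \mu, m$ so that this quadratic form is bounded by $-m\cE$, subject to the non-degeneracy constraint that guarantees $\cE \ge c_1 f$ on $\cD$. Matching the coefficients of $f$, $|V_t|^2$ and the cross-term leads to a system of inequalities; the claimed value $\mu = 2\sqrt{C_L^*} - \sqrt{C_L^* - L + \tfrac14 \sqrt{C_L^*}\sigma}$ should arise as the unique $\mu$ for which the largest admissible rate equals $m = 2(\sqrt{C_L^*} - \sqrt{C_L^*-L+\tfrac14\sqrt{C_L^*}\sigma})$, and the standing hypothesis $\sigma < 4L/\sqrt{C_L^*}$ is exactly what forces the discriminant to be nonnegative and $m > 0$. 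I expect this to be the main obstacle: showing the inequalities can be simultaneously satisfied with the prescribed $\mu, m$ while $\cE$ remains coercive is a delicate matching problem, in spirit analogous to the discrete-time constrained optimization of Lemma~\ref{lem:49378}.

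Finally, I would localize and integrate. Stopping at $T$ removes the martingale part of $d\cE$ (which is genuinely a martingale thanks to the $L^4$ moment bounds on $(X,V)$ and the Lipschitz continuity of $\Sigma$), so taking expectations gives $\tfrac{d}{dt}\E[\cE(X_{t\wedge T}, V_{t\wedge T})] \le -m \E[\cE(X_{t\wedge T}, V_{t\wedge T})]$. Gronwall's inequality and coercivity then yield the stated bound with $C = c_1^{-1}\cE(X_0,V_0)$, where the latter is finite by the $L^4$-assumption on the initial conditions.
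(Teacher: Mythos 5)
Your ansatz (the Lyapunov function $\cE = Af + B|v|^2 + C\scp{v}{\grad f}$, It\^o, elimination via PL, Lipschitz, Hessian and noise bounds) is exactly the paper's; the paper uses the normalization $C=1$, $A=a$, $B=b/2$. However, there is a concrete error in your plan and a point where you diverge from the paper's argument in a way that costs you.

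\textbf{The error.} After It\^o the coefficient of $\scp{v}{\grad f}$ in the drift is $A-2B-C\mu$, and in the target $-m\cE$ it is $-mC$. To make the cross-term cancel you need $A-2B-C\mu=-mC$, i.e.\ $A-2B=C(\mu-m)$, \emph{not} $A=2B$. Since the theorem's $\mu$ satisfies $\mu>m$ (indeed $\mu-m=\sqrt{C_L^*-L+\frac14\sqrt{C_L^*}\sigma}$), your choice $A=2B$ leaves the residual $-C\mu\scp{v}{\grad f}$ in the drift, while $-m\cE$ only supplies $-mC\scp{v}{\grad f}$. Absorbing the difference by Cauchy--Schwarz/Young together with $|\grad f|^2\le 2C_L^*f$ strictly worsens the rate: for example at $\sigma=0$, $C_L^*=\frac98L$, $B=1$, $A=2$ the resulting system of inequalities in $C$ has no solution, so the target $m$ cannot be reached with $A=2B$. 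The correct relation $A=2B+C(\mu-m)$, which in the paper's variables reads $a=b+\mu-m$, together with the remaining two coefficient constraints ($m\le 2\mu-CC_L^*/B$ and $m\le (2CL-B\sigma)/A$) and the coercivity $2AB\ge C^2C_L^*$, yields after choosing $B/C=\frac12\sqrt{C_L^*}$ the quadratic inequality $m^2-4m\sqrt{C_L^*}+4L-\sqrt{C_L^*}\sigma\ge 0$, whose relevant root is precisely the claimed $m$, and $\mu=\sqrt{C_L^*}+m/2$ gives the claimed friction. You cannot get there from $A=2B$.

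\textbf{The divergence.} The paper never proves the pointwise bound $\text{drift}(\cE)\le -m\cE$. Instead it uses the algebraic identity $\scp{\grad f}{V}=E-af-\frac b2|V|^2$ to rewrite the drift of $E$ as $(a-\mu-b)E$ plus two sign-definite terms, giving a clean linear differential inequality $e_t'\le(a-\mu-b)e_t$ for $e_t=\E[\1_{\{T>t\}}E_t]$. It then derives a \emph{separate} linear inequality $\varphi_t'\le e_t-a\varphi_t$ for $\varphi_t=\E[\1_{\{T>t\}}f(X_t)]$ and integrates, yielding the sharper rate $\min(a,\mu-a+b)$; coercivity ($ab\ge C_L$ via Lemma~\ref{lem:Epositive}) is used only to control the jump $\xi$ at $T$, not to convert $\E[\cE]$ into $\E[f]$. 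Your single-Gronwall-plus-coercivity route, even when repaired as above, can only yield the rate $\mu+b-a$. In Theorem~\ref{theoSDE2} this happens to equal $\min(a,\mu-a+b)$ (because $a\ge\mu-a+b$ under the stated hypotheses), so you would get the right constant here, but you should be aware that this is a special feature of the chosen parameters, and the coupled two-inequality system is the reason the paper can also handle the regime $a<\mu-a+b$. Finally, for the Gronwall step on the stopped process you do need $\cE(X_T,V_T)\ge 0$ at the exit time, which requires the coercivity bound on (the closure of) $\cD$, not just its interior; the paper handles this with the auxiliary increasing process $\xi_t$ and Lemma~\ref{lem:Epositive}.
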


\textbf{Overview of the literature:} 
Convergence rates for the solution to the heavy-ball ODE, i.e.
\begin{equation}\label{eq:HBODE}
	\dot x_t  = v_t ,\quad 		\dot v_t = - \mu v_t - \grad f(x_t),
\end{equation}
with $\mu >0$, have been derived in the literature under various assumptions on the loss landscape, starting from the work by Polyak~\cite{polyak1963gradient}. Polyak showed that, for $L$-strongly convex and twice differentiable functions $f$, $(f(x_t))_{t \ge 0}$ converges with rate $\mu-\sqrt{\max(0,\mu^2-4L)}$. The choice $\mu = 2 \sqrt L$ leads to a convergence rate of $2 \sqrt L$.
In comparison, for the solution to the gradient flow ODE, i.e.
\begin{equation} \label{eq:GFODE}
	\dot y_t = - \grad f(y_t),
\end{equation}
one has exponential convergence of $(f(y_t))_{t \ge 0}$ with rate $2L$.
Thus, choosing the optimization dynamics \eqref{eq:HBODE} instead of \eqref{eq:GFODE} is beneficial for objective functions $f$ that are comparatively flat around the global minimum.
In 1963, Polyak \cite{polyak1963gradient} and \L ojasiewicz \cite{lojasiewicz1963propriete} independently proposed the gradient inequality \eqref{eq:PLintro} which is a relaxation of the strong convexity assumption. It turns out  that \eqref{eq:PLintro} together with a Lipschitz assumption on the gradient of $f$ is still sufficient to prove the exponential convergence of $(f(y_t))_{t \ge 0}$ for solution to the gradient flow \eqref{eq:GFODE} and $(f(x_t))_{t \ge 0}$ for the solution to the heavy-ball ODE \eqref{eq:HBODE}, see \cite{polyak2017lyapunov}. The proof for the latter result relies on a Lyapunov function that contains the sum of the potential and kinetic energy of the dynamical system, as well as a cross-term of the two. \cite{aujol2022convergence1} obtains a convergence rate of $\sqrt{2L}$ for the friction parameter $\mu = 3 \sqrt{L/2}$ in the setting of $L$ quasi-strongly convex functions with Lipschitz continuous gradient having a unique isolated minimum. Moreover, they show that for every parameter $\mu<3\sqrt{L/2}$ there exists an $L$-strongly convex objective function $f$ (having only a H\"older continuous gradient) such that $(f(x_t))_{t \ge 0}$ converges at most with rate $\frac 23 \mu$. Note that quasi-strong convexity implies the PL-inequality, see \cite{aujol2022convergence}. In \cite{apidopoulos2022convergence}, an exponential rate of convergence for functions satisfying the PL-inequality is derived, proving a similar advantage of the heavy-ball dynamics over the gradient flow dynamics to the one found for flat, strongly convex functions. \cite{Cabot2007Onthelong} considered the heavy-ball ODE with time-dependent friction parameter. They give sufficient and necessary conditions for the decay rate of the friction in order get convergence of the process, as well as the $f$-value of the process, for convex objective functions.

Recently, the PL-inequality gained a considerable amount of attention due to its simplicity, its strong implications on the geometry and its applicability for objective functions appearing in machine learning, see e.g. \cite{karimi2016linear, dereich2021convergence, aujol2022convergence, kuruzov2023gradient, garrigos2023square, rebjock2023fast, wojtowytsch2021stochastic}.

For the discrete-in-time heavy ball scheme the situation is much more intricate. One needs to distinguish two fundamentally different problem setups: First, rates of convergence for optimally chosen hyperparameters, second, rates of convergence for arbitrary fixed hyperparameters. Regarding the first class, the seminal work by Polyak~\cite{polyak1964some} proves faster convergence of the deterministic heavy ball method compared to gradient descent when optimizing a \emph{quadratic function}
	and choosing the optimal parameters $\gamma, \beta >0$.
	Conversely, the counterexamples presented in~\cite{lessard2016analysis, goujaud2023provable} show that heavy ball does not accelerate on the much larger class of strongly convex objective functions for optimally chosen step-size. Moreover, in \cite{yue2023lower} it is proved that no first order method accelerates on the class of objective functions satisfying the PL-inequality with parameter $L$ for optimally chosen step-size.
	Nevertheless, the work \cite{danilova2020non} finds parameters $\gamma$ and $\beta$ such that heavy ball recovers the the best possible convergence rate of gradient descent on the class of PL-functions. 
	
	In this work, we consider the second fundamentally different situation, namely, the stochastic gradient and small step-size setting. We show that MSGD accelerates convergence for conservatively chosen step-sizes, i.e. in the small step-size regime, when converging to flat minima, as well as for large noise intensities, see Remark~\ref{rem:SGDcomparison}. As pointed out above, in general the constants $C_L,L$ and $\sigma$ are not known and the practitioner chooses a sufficiently small (and time-decreasing) step-sizes to at least guarantee convergence.

Note that the MSGD process is a slight variation of the stochastic heavy-ball (SHB), which generalizes Polyak's heavy-ball method by adding stochastic noise. According to~\cite{gadat2018stochastic}, the SHB process is defined via the iteration scheme
\begin{equation} \begin{split}\label{eq:SHBGadatref}
		X_{n+1} &= X_n + \gamma_{n+1}V_{n+1}, \\
		V_{n+1}&= V_n - \gamma_{n+1}\mu (\nabla f(X_n)+V_n-D_{n+1}).
	\end{split}
\end{equation}
It can be shown that \eqref{eq:SHBGadatref} is a discretization of \eqref{eq:HBODE} with an additional perturbation, where one iteration step with step-size $\gamma_n$ corresponds to the position of \eqref{eq:HBODE} after time $\sqrt{\gamma_n/\mu}$. Thus, compared to the immediate time discretization executed in the MSGD scheme \eqref{eq:MSGDintro} the SHB process \eqref{eq:SHBGadatref} speeds up the corresponding ODE time for small step-sizes. A similar phenomenon occurs in Nesterov acceleration. In~\cite{even2021continuized} the authors propose a continuized process using exponential stopping times so that no additional time change is needed in order to be able to compare the discrete process with the corresponding continuous-in-time counterpart. Convergence rates for the SHB in the convex setting can be found in~\cite{gadat2018stochastic, sebbouh2021almost}. In particular, \cite{gadat2018stochastic} recovers the optimal $\cO(1/n)$-convergence rates in the underparametrized regime for a broader class of step-sizes compared to SGD~\cite{RM51}, an effect also know for Ruppert-Polyak averaging~\cite{dereich2023central}.  \cite{loizou2017linearly,loizou2020momentum} derives an (accelerated) exponential convergence rate for SHB for solving a linear system with a random norm. In this setting, the stochastic gradient vanishes as SHB approaches the optimal point which is comparable to our assumption \eqref{eq:assunoise}.
Similar to SGD, SHB is able to avoid strict saddle points~\cite{liu2023almost} and converges on analytic objective functions under classical noise assumptions~\cite{dereich2021convergence}.

In~\cite{li2019stochastic} it has been shown that for an appropriately chosen diffusion matrix $\Sigma$ the SDE \eqref{eq:SDEintro} is a weak approximation of the MSGD process on a finite time interval. For the continuous-in-time counterpart of SGD, Wojtowytsch~\cite{wojtowytsch2021stochasticcont} showed that the special structure of the noise in overparametrized settings induces a tendency for the process to choose a flat minimum. Flat minima are commonly believed to generalize better, see e.g.~\cite{keskar2016large} for numerical experiments on the generalization gap and the sharpness of minima. In the mean-field scaling, the SGD dynamics have been shown to converge to solutions of conservative stochastic partial differential equations, see~\cite{gess2022conservative, gess2023stochastic}.
Hu et al.~\cite{hu2019global} investigated the behavior of an SDE similar to the one defined in \eqref{eq:SDEintro} near strict saddle points.

{\bf The paper is organized as follows:} In Section~\ref{sec:MLintro}, we motivate the assumptions on the objective function and the size of the stochastic noise from overparametrized supervised learning. Section~\ref{sec:MSGD} is devoted to the proofs of the results on the MSGD process in discrete time. In Section~\ref{sec:MSDE}, we prove the results on the continuous-in-time counterpart defined in \eqref{eq:SDEintro}.

\textbf{Notation:} We denote by $v^\dagger$ the transpose of a vector $v \in \R^d$, by $A^\dagger$ the transpose of a matrix $A \in \R^{n \times k}$ and by $\|A\|_F$, respectively $\|A\|$, the Frobenius norm, respectively operator norm of $A$. Moreover,  $|\cdot|$ denotes the standard Euclidean norm and $\langle \cdot, \cdot \rangle$ the standard scalar product on the Euclidean space.

\section{Loss landscape and noise in empirical risk minimization} \label{sec:MLintro}
In this section, we motivate the main assumptions on the loss landscape and the stochastic noise in a machine learning application. In particular, we consider a regression problem in supervised learning with quadratic loss function. Let $(\theta_1,\zeta_1), \dots, (\theta_N, \zeta_N) \in \R^{d_{\mathrm{in}}}\times \R^{d_{\mathrm{out}}}$ be a given training data set. We choose a parameterized hypotheses space $\mathcal S :=\{\mathfrak N^x(\cdot): x \in \R^{d}\}$ consisting of functions $\mathfrak N^x(\cdot): \R^{d_{\mathrm{in}}} \to \R^{d_{\mathrm{out}}}$ such that, for all $i=1, \dots, N$, $x \mapsto \mathfrak N^x(\theta_i)$ is differentiable.
For example, one can choose $\mathcal S$ to be the space of response functions of fully connected feed-forward neural networks with fixed architecture.
The aim of risk minimization (with respect to the square loss) is to select a suitable model $\mathfrak N^x(\cdot )$ minimizing the empirical risk
$$
f(x)=\frac 1{2N} \sum_{i=1}^N  |\mathfrak N^x(\theta_i)-\zeta_i|^2, \quad x \in \R^{d}.
$$
In order to derive a dynamical system as in \eqref{eq:MSGDintro} we choose deterministic starting values $X_0, V_0 \in \R^d$, a sequence of strictly positive reals $(\gamma_{n})_{n \in \N}$, an i.i.d. sequence $(I_n)_{n \in \N}$ such that $I_n$ is uniformly distributed on $\{1, \dots, N\}$ and consider the dynamical system
\begin{align*}
	X_{n+1} &= X_n + \gamma_{n+1}V_{n+1}, \\
	V_{n+1}&= V_n - \gamma_{n+1}\mu V_n -  \frac 12 \gamma_{n+1}   \nabla \bigl(|\mathfrak N^{x}(\theta_{I_{n+1}})-\zeta_{I_{n+1}}|^2\bigr)\big|_{x = X_n}.
\end{align*}
We recover \eqref{eq:MSGDintro} by choosing $$
D_{n+1} = \grad f(X_n)- \frac 12    \nabla \bigl(|\mathfrak N^{x}(\theta_{I_{n+1}})-\zeta_{I_{n+1}}|^2\bigr)\big|_{x = X_n}.
$$
We set $(\cF_n)_{n \in \N_0} = (\sigma(I_1, \dots, I_n))_{n \in \N_0}$ and note that, for all $n \in \N$, $\E[D_{n+1} | \cF_n]=0$ and 
\begin{align*}
	\E[&|D_{n+1}|^2 | \cF_n] \le C \sum_{i=1}^N  |(\mathfrak N^{X_n}(\theta_{i})-\zeta_{i})\grad \mathfrak N^{x}(\theta_{i})\big|_{x = X_n}|^2,
\end{align*}
for a constant $C>0$. On a domain $\mathcal D \subset \R^{d}$ where the gradient $ \grad_x \mathfrak N^x (\theta_i)$ is bounded for all $i=1, \dots, N$, this implies that
$$
\E[|D_{n+1}|^2 | \cF_n] \le \sigma f(X_n),
$$
for a constant $\sigma\ge 0$.
Analogously, the gradient $\grad f$ satisfies $
|\grad f(x)|^2 \le C f(x)
$, i.e. the inverse PL-inequality, for a constant $C \ge 0$ on the same domain $\mathcal D$.

We next motivate the PL-inequality. The regression problem is called overparametrized if there exists a $y \in \R^{d}$ with $f(y)=0$. The following result was shown by Cooper~\cite{cooper2018loss} for overparametrized regression problems satisfying $d>Nd_{\mathrm{out}}$ and $\mathfrak N^{\cdot}(p)$ being $C^{k}$-smooth for a $k \ge d-Nd_{\mathrm{out}}+1$ and all $p \in \R^{d_{\mathrm{in}}}$: for almost all tuples of training data (up to a Lebesgue nullset) the set of global minima $\mathcal M := \{x \in \R^{d}: f(x)=0\}$ forms a closed $(d-Nd_{\mathrm{out}})$-dimensional $C^k$-submanifold of $\R^{d}$. If such $\mathcal M$ is a $C^2$-manifold and, for a $y \in \mathcal M$, we have $\dim(\Hess f(y))=Nd_{\mathrm{out}}$, Theorem~2.1 of \cite{feehan2019resolution} shows that there exists a neighborhood $U \subset \R^{d}$ of $y$ such that a PL-inequality holds on $U$, i.e. there exists an $L>0$ with $2L f(x) \le |\grad f(x)|^2$ for all  $x \in U$.

The last result of this section is a general version of the inverse PL-inequality for functions $f: \R^d \to \R$ having a Lipschitz continuous gradient. This observation has already been made in \cite{wojtowytsch2021stochastic}, see Lemma~B.1 therein. We weaken the assumptions by only assuming local Lipschitz continuity on a ball around a local minimum. We will use this lemma repeatedly in the subsequent sections. 

\begin{lemma} \label{rem:Lipschitz}
	Let $r>0$, $y \in \R^d$ and assume that $\grad f$ is $C_L$-Lipschitz continuous on $B_r(y)$ and $\inf_{x \in B_r(y)}f(x)=f(y)$. Then, for all $x \in B_{r/2}(y)$ it holds that
	\begin{align} \label{eq:inversePL}
		|\grad f(x)|^2 \le 2 C_L (f(x)-f(y)).
	\end{align}
\end{lemma}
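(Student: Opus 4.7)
The plan is to use the standard descent lemma together with the fact that $y$ must be a critical point of $f$.

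First I would observe that since $\nabla f$ is continuous on $B_r(y)$ and $y$ is a local minimizer of $f$ on this open ball, $y$ is a critical point, i.e.\ $\nabla f(y) = 0$. The $C_L$-Lipschitz bound on $\nabla f$ then gives, for every $x \in B_{r/2}(y)$,
\begin{align*}
|\nabla f(x)| = |\nabla f(x) - \nabla f(y)| \le C_L |x-y| < \tfrac{C_L r}{2}.
\end{align*}

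Next I would invoke the descent lemma on the convex set $B_r(y)$. Since $\nabla f$ is $C_L$-Lipschitz on this ball, integrating $\nabla f$ along the segment from $x$ to any $z \in B_r(y)$ and applying Cauchy--Schwarz yields
\begin{align*}
f(z) \le f(x) + \langle \nabla f(x), z-x\rangle + \tfrac{C_L}{2}|z-x|^2.
\end{align*}
I would then choose the minimizer of the right-hand side, namely $z = x - \tfrac{1}{C_L}\nabla f(x)$. The point I flagged above about $|\nabla f(x)|$ guarantees that this $z$ stays inside the domain, since
\begin{align*}
|z - y| \le |x-y| + \tfrac{1}{C_L}|\nabla f(x)| < \tfrac{r}{2} + \tfrac{r}{2} = r,
\end{align*}
so $z \in B_r(y)$. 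Substituting this $z$ gives $f(z) \le f(x) - \tfrac{1}{2C_L}|\nabla f(x)|^2$.

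Finally, because $z \in B_r(y)$ and $y$ minimizes $f$ on $B_r(y)$, we have $f(y) \le f(z)$, which combined with the previous display yields
\begin{align*}
f(y) \le f(x) - \tfrac{1}{2C_L}|\nabla f(x)|^2,
\end{align*}
and rearranging is exactly \eqref{eq:inversePL}. The only subtle step is the verification that the descent-lemma probe point $z$ stays in $B_r(y)$; this is precisely why the statement restricts $x$ to the \emph{half}-radius ball $B_{r/2}(y)$, and it is handled by the critical-point observation at the very beginning. The rest is routine.
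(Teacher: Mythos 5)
Your proof is correct and is essentially the same as the paper's: the paper proves the descent lemma inline by integrating the one-dimensional restriction $g(t)=f(x-t\,\nabla f(x)/|\nabla f(x)|)$ up to $t=|\nabla f(x)|/C_L$, which is exactly your probe point $z=x-\tfrac{1}{C_L}\nabla f(x)$, and both proofs rely on the same observation that $\nabla f(y)=0$ keeps $z$ inside $B_r(y)$ when $x\in B_{r/2}(y)$.
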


\begin{proof}
	Since $y$ is a critical point of $f$ we have for all $x \in B_{r/2}(y)$
	$$
	|\grad f(x)| = |\grad f(x)-\grad f(y)| \le \frac{C_L r}{2}.
	$$
	If $\grad f(x)=0$ the statement is obviously true. 
	If $\grad f(x) \neq 0$ consider the function 
	$$
	g(t) = f\Bigl(x-t \frac{\grad f(x)}{|\grad f(x)|}\Bigr).
	$$
	Note that for $x \in B_{r/2}(y)$ and all $t \in [0, \frac{|\grad f(x)|}{C_L}]$ we have $x-t \frac{\grad f(x)}{|\grad f(x)|} \in B_r(y)$ so that with the Lipschitz continuity of $\grad f$ and since $y$ is a local minimum
	\begin{align*}
		f(y)-f(x) &\le g\Bigl(\frac{|\grad f(x)|}{C_L}\Bigr)-g(0) = \int_0^{\frac{|\grad f(x)|}{C_L}} g'(s)  \, ds \\
		& \le \frac{|\grad f(x)|}{C_L} g'(0) +\frac{|\grad f(x)|^2}{2C_L}= -\frac{|\grad f(x)|^2}{2C_L}.
	\end{align*}
\end{proof}

\begin{remark}
	For functions $f:\R^d \to \R$ with $C_L$-Lipschitz continuous gradient satisfying the PL-inequality we get with Lemma~\ref{rem:Lipschitz} that 
	\begin{align} \label{eq:18376655554}
		2L (f(x)-f(y))\le |\grad f(x)|^2 \le 2 C_L (f(x)-f(y)),
	\end{align}
	where $x \in \R^d$ and $y \in \R^d$ is a global minimum of $f$ with $f(y)=0$. Thus, we immediately get $C_L \ge L$.
	In the strictly convex case
	$$
	f(x) =\frac 12 x^\dagger A x,
	$$
	for a positive definite matrix $A \in \R^{d \times d}$, the constants $C_L$, respectively $L$, in \eqref{eq:18376655554} correspond to the largest, respectively smallest, eigenvalue of $A$.
\end{remark}

\section{Momentum stochastic gradient descent in discrete time} \label{sec:MSGD}
In this section, we consider the MSGD scheme $(X_n)_{n \in \N_0}$ introduced in \eqref{eq:MSGDintro}. We state the main results of this section. First, we show exponential convergence of the objective function value in the numerical time $(t_n)_{n \in \N_0}=(\sum_{i=1}^n \gamma_i )_{n \in \N_0}$ for sufficiently small step-sizes. This implies almost sure convergence of the MSGD process itself.

\begin{theorem} \label{theo1}
	Let $L>0, \sigma \ge 0$.
	Let $\mathcal D\subset \R^d$ be an open set and assume that 
	$$
	|\grad f(x)|^2 \ge 2L f(x)
	$$
	for all $x \in \mathcal D$.
	Moreover, for $n \in \N_0$, let
	$
	\IA_n = \{X_i(\omega) \in \mathcal D \text{ for all } i =0, \dots, n\}
	$
	and assume that
	$$ 
	\E[|D_{n+1}|^2 | \cF_n] \le \sigma f(X_n), \quad \text{ on } \IA_{n}.
	$$
	There exists $\bar \gamma >0$ such that 
		if $\sup_{n \in \N}\gamma_n\le \bar \gamma$ there holds:
	\begin{enumerate}
		\item[(i)] There exist $C,m>0$ such that for all $n \in \N$ we have
		$$
		\E[\1_{\IA_{n-1}} f(X_n)] \le C \exp(-m t_n),
		$$
		where $t_n = \sum_{i=1}^n \gamma_i$.
		\item[(ii)] Let $m'<m$ and assume that $ \sum_{i=0}^\infty \exp((m'-m) t_i)<\infty$. Then, on $\IA_\infty= \bigcap_{n \in \N_0} \IA_n$, we have $\exp(m' t_n) f(X_n) \to 0$ almost surely.
		\item[(iii)] The process $(X_n)_{n \in \N_0}$ converges almost surely on $\IA_\infty$.
	\end{enumerate}
\end{theorem}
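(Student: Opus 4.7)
The plan is to construct a suitable Lyapunov function for the MSGD dynamics, in the spirit of Polyak's analysis of the heavy-ball flow, and to localize the argument carefully to the events $\IA_n$ on which the PL-inequality and noise bound are assumed to hold. Concretely, I would introduce an energy of the form
$$
\mathcal E_n = f(X_n) + \tfrac{a}{2}|V_n|^2 + b\,\langle V_n, \nabla f(X_n)\rangle
$$
with positive parameters $a,b$ to be chosen. Using the inverse PL-inequality from Lemma~\ref{rem:Lipschitz} (which applies globally since $\nabla f$ is globally $C_L$-Lipschitz and $\inf f = 0$) to bound $b|\langle V_n, \nabla f(X_n)\rangle| \le \tfrac{b}{2}|V_n|^2 + b C_L f(X_n)$, one checks that for $b<1/C_L$ and $a>b$ the quantity $\mathcal E_n$ is comparable to $f(X_n)+|V_n|^2$. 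The cross term is essential: without it, the position variable $X_n$ has no direct dissipation, and only the cross term produces a $|\nabla f(X_n)|^2$ contribution which the PL-inequality converts to the needed multiple of $f(X_n)$.

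The core of the argument is to establish a one-step recursion
$$
\E\bigl[\1_{\IA_n}\mathcal E_{n+1} \,\big|\, \cF_n\bigr] \le \1_{\IA_{n-1}}\,(1-m\gamma_{n+1})\,\mathcal E_n
$$
on $\IA_n$ (noting $\IA_n\subset\IA_{n-1}$ and $\1_{\IA_n}\in\cF_n$) for some rate $m>0$ and all small enough step-sizes. This is obtained by Taylor-expanding $f(X_{n+1})$ around $X_n$ with remainder controlled by $C_L|X_{n+1}-X_n|^2$, expanding $|V_{n+1}|^2$ and using $\E[D_{n+1}\mid\cF_n]=0$ together with $\E[|D_{n+1}|^2\mid\cF_n]\le\sigma f(X_n)$, and expanding the cross term using the Lipschitz continuity of $\nabla f$. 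The main obstacle, and the heart of the argument, is to choose $a,b,\mu$ and a threshold $\bar\gamma$ so that the coefficients of $f(X_n)$, $|V_n|^2$ and $\langle V_n,\nabla f(X_n)\rangle$ in the resulting bound align with a common decay factor $1-m\gamma_{n+1}$; this is exactly the constrained feasibility problem flagged in Theorem~\ref{thm:intro1}, whose solvability for small $\bar\gamma$ is a somewhat delicate algebraic check.

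Given the one-step inequality, claim (i) follows by iteration: using $(1-m\gamma_{n+1})\le e^{-m\gamma_{n+1}}$, induction yields $\E[\1_{\IA_{n-1}}\mathcal E_n]\le \mathcal E_0 e^{-mt_n}$, and coercivity of $\mathcal E_n$ transfers this to $\E[\1_{\IA_{n-1}}f(X_n)]$. For (ii), Markov's inequality gives
$$
\P\bigl(\exp(m't_n)\1_{\IA_{n-1}}f(X_n)>\eta\bigr) \le \eta^{-1} C\exp\bigl((m'-m)t_n\bigr),
$$
which is summable by assumption, so Borel--Cantelli yields $\exp(m't_n)\1_{\IA_{n-1}}f(X_n)\to 0$ almost surely; on $\IA_\infty$ all indicators are one.

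For (iii), the same Lyapunov estimate also gives $\E[\1_{\IA_{n-1}}|V_n|^2]\le C e^{-m t_n}$. By Cauchy--Schwarz and the uniform bound $\gamma_n\le\bar\gamma$,
$$
\sum_{n\ge 1}\gamma_n\,\E\bigl[\1_{\IA_{n-1}}|V_n|\bigr] \le \sqrt{C}\sum_{n\ge 1}\gamma_n e^{-mt_n/2} \le \frac{2\sqrt{C}\,e^{m\bar\gamma/2}}{m}<\infty,
$$
where the last step bounds the Riemann sum by the integral of $e^{-ms/2}$. Fubini then gives $\sum_{n\ge 1}\gamma_n\1_{\IA_{n-1}}|V_n|<\infty$ almost surely. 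On $\IA_\infty$ every indicator equals one, and since $X_n-X_0=\sum_{i=1}^n\gamma_i V_i$, absolute convergence of the series forces $(X_n)_{n\in\N_0}$ to converge almost surely, which is (iii).
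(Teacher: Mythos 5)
Your proposal follows essentially the same route as the paper: the Lyapunov function you write down is, up to normalization, exactly the paper's $E_n = af(X_n)+\langle\nabla f(X_n),V_n\rangle+\tfrac b2|V_n|^2$ (see \eqref{eq:Lyapunov}), and the structure of the argument (one-step contraction for the energy on $\IA_n$, then Markov plus Borel--Cantelli for (ii), then Cauchy--Schwarz on $\E[\1_{\IA_{n-1}}|V_n|^2]$ for (iii)) matches Propositions~\ref{prop:SHB} and \ref{prop:convrate} closely. Your handling of the indicator monotonicity $\E[\1_{\IA_n}\mathcal E_n]\le\E[\1_{\IA_{n-1}}\mathcal E_n]$ is correct because your coercivity condition forces $\mathcal E_n\ge 0$; the paper packages this as Lemma~\ref{lem:Epositive} under the constraint $ab\ge C_L$.

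One genuine difference worth noting: to pass from the exponential decay of $\E[\1_{\IA_{n-1}}\mathcal E_n]$ to that of $\E[\1_{\IA_{n-1}}f(X_n)]$, you invoke coercivity of $\mathcal E_n$, which is valid and gives the decay rate of the energy. The paper instead sets up a \emph{separate} recursion for $f(X_n)$ driven by $E_{n+1}$ (see \eqref{eq:21378861}), which decouples the $f$-decay factor $(1+a\gamma)^{-1}$ from the energy-decay factor $\delta$ and yields rate $\min(a,\mu-a+b)$; this finer bookkeeping is what makes the optimization of $\mu$ in Theorem~\ref{theo2} possible. For Theorem~\ref{theo1} alone, where only some $m>0$ is needed, your coercivity shortcut suffices.

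The one place your proposal stops short is the statement you flag yourself: that for any $\mu>0$ one can in fact find $a,b>0$ and $\bar\gamma>0$ so that the one-step contraction holds with a strictly positive rate and the coercivity/non-negativity constraints are met simultaneously. This is exactly the content of Lemma~\ref{lem:feasible1} (existence of feasible $(a,b)$ with $ab\ge C_L$ and \eqref{eq:238877}) together with the small-$\gamma$ asymptotics of the coefficients $\alpha_n,\beta_n,\delta_n,\epsilon_n$ worked out in the proof of Proposition~\ref{prop:convrate}. Since you explicitly acknowledge this as the heart of the matter rather than asserting it without justification, I would call this a deferred step rather than an error, but a complete proof would have to carry it out.
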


For step-sizes $(\gamma_n)_{n \in \N}$ with $\gamma_n \to 0$, there exists $N \in \N$ such that $\sup_{n > N} \gamma_n$ is sufficiently small in order to apply Theorem~\ref{theo1} for the system $(X_n)_{n \ge N}$ started at time $N$. However, Theorem~\ref{theo1} is also applicable for a constant sequence of step-sizes $\gamma_n \equiv \gamma$, as long as $\gamma$ is sufficiently small. Note that, since $X_0,V_0 \in L^2(\Omega, \mathcal F_0)$, the Lipschitz continuity of $\grad f$ and the assumptions on the process $(D_n)_{n \in \N}$ imply that for all $n \in \N$ we have $
\1_{\IA_{n-1}}X_n$, $\1_{\IA_{n-1}}V_n$, $\1_{\IA_{n-1}}\grad f(X_n) \in L^2(\Omega)$ and $\1_{\IA_{n-1}}f(X_n) \in L^1(\Omega)$. The convergence rate $m$ in Theorem~\ref{theo1} depends on $L, C_L, \sigma, \mu$ and $\sup_{n \in \N}\gamma$. 

Next, we
optimize $\mu$ over the set of friction parameters in the small step-size regime. We recover the convergence rates  for the heavy ball ODE~\eqref{eq:HBODE} derived in \cite{apidopoulos2022convergence} in terms of the numerical time $t_n = \sum_{i=1}^n \gamma_i$. Since comparison results for MSGD~\eqref{eq:MSGDintro} and the heavy ball ODE \eqref{eq:HBODE} on non-convex objective functions only hold on a finite time interval, see e.g.~\cite{wumean, li2019stochastic, gadat2022asymptotic}, the time continuous result does not carry over to the discrete-in-time setting. In fact, in the analysis of MSGD additional error terms appear due to the discrete nature. Therefore, the proof requires a worst-case analysis bounding these error terms over the set of allowed step-sizes.
	Theorem~\ref{theo2} motivates the comparison of MSGD and SGD in the small-learning rate regime, see Remark~\ref{rem:SGDcomparison}.

\begin{theorem} \label{theo2}
	Set $\kappa = \frac{C_L}{L}$. Let 
	\begin{align*}
		\mu \in 
		\begin{cases}
			\bigl[\frac{1}{\sqrt 8}\bigl(5-\sqrt{ 9 - 8\kappa}\bigr)\sqrt{L},\frac{1}{\sqrt 8}\bigl(5+\sqrt{ 9 - 8\kappa}\bigr)\sqrt{L} \, \bigr], & \text{ if } \kappa < \frac 98, \\
			\bigl\{(2\sqrt \kappa-\sqrt{\kappa-1})\sqrt L \bigr\}, & \text{ if } \kappa \ge \frac 98.
		\end{cases}
	\end{align*}
	Then, under the assumptions of Theorem~\ref{theo1}, for every $\eps>0$ there exist $C, \bar \gamma\ge 0$ such that if $\sup_{n \in \N}\gamma_n \le \bar \gamma$ it holds that
	\begin{align*}
		\E[\1_{\IA_{n-1}} f(X_n)] \le C \exp(-(m-\eps)t_n), \quad \text{ for all } n \in \N,
	\end{align*}
	where  
	$$
	m = 
	\begin{cases} \sqrt{2L},& \text{ if } \kappa<\frac{9}{8},\\
		2(\sqrt{\kappa}-\sqrt{\kappa-1})\sqrt{L},& \text{ if } \kappa \ge \frac 98.
	\end{cases}
	$$
\end{theorem}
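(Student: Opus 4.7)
\textbf{Proof plan for Theorem~\ref{theo2}.} The strategy rests on identifying, in the small step-size regime, the rate $m$ furnished by Theorem~\ref{theo1} as an explicit function of $\mu$, and then maximizing it over $\mu$. The key observation is that the discrete Lyapunov argument underpinning Theorem~\ref{theo1} reduces, as $\sup_n \gamma_n \to 0$, to the algebraic problem that governs the heavy-ball ODE \eqref{eq:HBODE} under a PL-inequality, which was optimized in \cite{apidopoulos2022convergence}. The content of Theorem~\ref{theo2} is precisely that, in the small-step-size limit, the MSGD bounds inherit this continuous-time optimum up to an arbitrarily small loss~$\varepsilon$.

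More concretely, I would first revisit the proof of Theorem~\ref{theo1} to isolate the constraint system linking the Lyapunov parameters, the friction $\mu$, the step-size bound $\bar\gamma$, and the decay rate $m$. The natural Lyapunov function takes the form $\mathcal L_n = a\,f(X_n) + |V_n|^2 + b\,\langle V_n, \nabla f(X_n)\rangle$ with $a,b \ge 0$ tuned so that $\mathcal L_n$ is comparable to $f(X_n)+|V_n|^2$ (the lower bound in terms of $f$ uses Lemma~\ref{rem:Lipschitz}). Plugging in the recursion \eqref{eq:MSGDintro}, applying the PL-inequality, the noise bound \eqref{eq:assunoise}, and the inverse PL-inequality of Lemma~\ref{rem:Lipschitz}, one obtains a one-step contraction of the form
$$
\E[\1_{\IA_n}\mathcal L_{n+1}] \le (1-\gamma_{n+1}\,m)\,\E[\1_{\IA_{n-1}}\mathcal L_n],
$$
valid whenever a system of polynomial inequalities in $(a,b,\mu,m,\bar\gamma)$ is satisfied. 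The $\mathcal O(\bar\gamma)$ terms in that system arise from the discretization error and drop out in the limit $\bar\gamma \to 0$, leaving the algebraic system that encodes exactly the heavy-ball ODE decay rate.

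Next, I would maximize the resulting function $\mu \mapsto m_0(\mu)$ over $\mu>0$. A direct quadratic analysis produces the dichotomy stated in the theorem: for $\kappa := C_L/L < 9/8$ the optimization is degenerate and every $\mu$ in the interval $[\frac{1}{\sqrt 8}(5-\sqrt{9-8\kappa})\sqrt L,\,\frac{1}{\sqrt 8}(5+\sqrt{9-8\kappa})\sqrt L]$ achieves the common optimum $m = \sqrt{2L}$; for $\kappa \ge 9/8$ the unique maximizer is $\mu = (2\sqrt\kappa - \sqrt{\kappa-1})\sqrt L$ with value $m = 2(\sqrt\kappa - \sqrt{\kappa-1})\sqrt L$. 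Consistency at the threshold $\kappa = 9/8$ is easily verified: the interval collapses to $\{\tfrac{5}{\sqrt 8}\sqrt L\}$, and both expressions for $m$ coincide with $\sqrt{2L}$. Fixing such an optimal $\mu$ and any $\varepsilon>0$, continuity of the constraint system in $\bar\gamma$ at $\bar\gamma=0$ allows me to choose $\bar\gamma$ small enough so that some admissible pair $(a,b)$ produces a discrete rate exceeding $m-\varepsilon$. Theorem~\ref{theo1}, combined once more with Lemma~\ref{rem:Lipschitz} to pass between $\mathcal L_n$ and $f(X_n)$, then delivers the claimed bound $\E[\1_{\IA_{n-1}} f(X_n)] \le C\exp(-(m-\varepsilon)t_n)$.

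The main obstacle is the explicit resolution of the algebraic optimum $m_0(\mu)$ and the identification of the threshold $\kappa = 9/8$: it is at this step that the dichotomy appears, reminiscent of the critical-damping phenomenon for the continuous heavy-ball ODE, and where the specific shape of the intervals and of the formula $(2\sqrt\kappa-\sqrt{\kappa-1})\sqrt L$ emerges from a discriminant vanishing. Once this algebraic analysis is pinned down, the passage to the discrete problem is a routine continuity/perturbation argument in the small step-size parameter~$\bar\gamma$.
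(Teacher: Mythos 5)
Your plan correctly identifies the paper's strategy -- a Lyapunov function $af(X_n) + \langle \nabla f(X_n), V_n\rangle + \tfrac{b}{2}|V_n|^2$, a small-step-size reduction to an algebraic constraint system, and then an optimization over $(a,b,\mu)$ -- and your consistency checks at the threshold $\kappa=9/8$ are sound. But there are two gaps worth flagging. First, and most important, the key technical content of Theorem~\ref{theo2} is precisely the "direct quadratic analysis" you defer: the paper's proof spends its entire length (Proposition~\ref{prop:convrate}, Lemma~\ref{lem:feasible1}, Lemma~\ref{lem:49378}, then Theorem~\ref{theo2}) parametrizing $a = b + 2C_L/b - \mu + \eps$, deriving the rate $m(\eps,b,\mu) = \min\bigl(b + 2\tfrac{C_L}{b} - \mu,\, 2(\mu - \tfrac{C_L}{b} - \eps)\bigr)$ as a function of $b$, $\mu$, and $\eps$, and then carefully identifying both the optimal $b$ (whose value $b^* = \sqrt{C_L}$ for $\kappa \ge 9/8$, or $b^\eps_\pm$ for $\kappa < 9/8$) and checking that $ab \ge C_L$ survives at the optimum. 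Asserting the answer (which is in the theorem statement) and calling the derivation "a direct quadratic analysis" is not a proof.

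Second, the one-step contraction you posit, $\E[\1_{\IA_n}\mathcal L_{n+1}] \le (1-\gamma_{n+1}m)\E[\1_{\IA_{n-1}}\mathcal L_n]$ together with two-sided comparability $\mathcal L_n \asymp f(X_n) + |V_n|^2$, is not the structure the paper uses, and it is not quite right. In the paper the Lyapunov process $E_n$ contracts at rate $\mu - a + b$, while $f(X_n)$ obeys a \emph{separate} recursion $\E[f(X_{n+1})] \le (1+a\gamma_{n+1})^{-1}\E[f(X_n)] + \text{(source in $E$)}$, and it is the interplay of these two rates that yields $m = \min(a, \mu - a + b)$. A direct comparability argument needs the lower bound $\mathcal L_n \ge c\,(f(X_n) + |V_n|^2)$, which by the inverse PL-inequality requires $ab > C_L$ \emph{strictly}; the paper only assumes $ab \ge C_L$, and the constraint boundary $ab = C_L$ appears at the optimum for $\kappa \ge 9/8$ ($b^* = \sqrt{C_L}$, $\mu^* = 2\sqrt{C_L} - \sqrt{C_L - L}$, giving $a b^* = C_L + \sqrt{C_L(C_L - L)} = C_L$ at $\kappa=1$). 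So your route would need its own careful perturbation analysis at that boundary, and is genuinely different in its details from the paper's two-step argument.
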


Using estimates from the proof of Theorem~\ref{theo1}, we can bound the probability that $(X_n)_{n \in \N_0}$ leaves the domain $\mathcal D$ if it is initialized close to a global minimum and with small initial velocity. 

\begin{corollary} \label{cor:prob1}
	Let $y \in \cD$ with $f(y)=0$. Then, under the assumptions of Theorem~\ref{theo1}, for every $\eps>0$ there exists an $r_0>0$ such that if $X_0 \in B_{r_0}(y)$, almost surely, and $\E[|V_0|^2]\le r_0$ we have
	$$
	\P(\IA_\infty^c ) \le \eps.
	$$
\end{corollary}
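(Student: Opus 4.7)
\emph{Proof plan for Corollary~\ref{cor:prob1}.} My plan is to reduce the bound on $\Pr(\IA_\infty^c)$ to a Markov-type estimate on the total arclength $\sum_n \gamma_n |V_n| \1_{\IA_{n-1}}$ of the trajectory, then exploit the exponential decay of the Lyapunov function constructed in the proof of Theorem~\ref{theo1} to control that sum by the expected initial Lyapunov value, and finally make the latter small by choosing $r_0$ small with the help of Lemma~\ref{rem:Lipschitz}.

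Since $\cD$ is open and $y \in \cD$, fix $r_1>0$ with $B_{r_1}(y) \subset \cD$ and define $\sigma := \inf\{n \in \N_0 : X_n \notin \cD\}$, so that $\IA_\infty^c = \{\sigma<\infty\}$. Using the telescoping identity $X_i-X_{i-1}=\gamma_i V_i$ and the inclusion $\{i\le \sigma\} \subseteq \IA_{i-1}$, on $\{\sigma<\infty\}$ one has
$$
r_1 \le |X_\sigma - y| \le |X_0 - y| + \sum_{i=1}^{\infty} \gamma_i |V_i| \1_{\IA_{i-1}}.
$$
Requiring $r_0 \le r_1/2$ forces $|X_0-y|<r_1/2$ almost surely, so by Markov's inequality
$$
\Pr(\IA_\infty^c) \le \frac{2}{r_1} \sum_{i=1}^\infty \gamma_i\, \E[\1_{\IA_{i-1}}|V_i|].
$$

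The Lyapunov function $\Psi$ used in the proof of Theorem~\ref{theo1} (see \eqref{eq:Lyapunov}) satisfies $\Psi(x,v) \ge c_1 |v|^2$ and $\Psi(x,v) \le c_2 (f(x)+|v|^2)$ globally, together with a one-step descent of the form $\E[\1_{\IA_n}\Psi(X_{n+1},V_{n+1}) \mid \cF_n] \le (1-c\gamma_{n+1})\1_{\IA_n}\Psi(X_n,V_n)$ for some $c>0$ as soon as $\sup_n\gamma_n$ is small enough. Iterating this in expectation, using $\IA_n \subseteq \IA_{n-1}$, one gets $\E[\1_{\IA_{i-1}}|V_i|^2] \le C\exp(-m t_i)\,\E[\Psi(X_0,V_0)]$ for some constants $C,m>0$ independent of $(X_0,V_0)$. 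Applying Cauchy--Schwarz and the Riemann-sum comparison $\sum_i \gamma_i \exp(-m t_i/2)\le K<\infty$ yields
$$
\Pr(\IA_\infty^c) \le \frac{2 K \sqrt{C}}{r_1 \sqrt{c_1}}\, \E[\Psi(X_0,V_0)]^{1/2}.
$$
Lemma~\ref{rem:Lipschitz} applied at the global minimum $y$ of $f$ gives $f(X_0) \le (C_L/2)|X_0-y|^2 \le (C_L/2) r_0^2$ almost surely, hence $\E[\Psi(X_0,V_0)] \le c_2\bigl((C_L/2)r_0^2 + r_0\bigr)$; choosing $r_0$ sufficiently small then makes the right-hand side $\le \eps$.

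The main technical obstacle is ensuring that the one-step descent for $\Psi$ respects the localization indicator $\1_{\IA_n}$ and propagates cleanly despite the possibility of the process leaving $\cD$; however, this bookkeeping is already handled inside the proof of Theorem~\ref{theo1}, so the corollary amounts to a quantitative repackaging of those estimates combined with the Markov and Cauchy--Schwarz steps above.
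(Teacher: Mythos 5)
Your plan follows the paper's own strategy: bound $\Pr(\IA_\infty^c)$ by a Markov inequality on the arclength $\sum_i \gamma_i\1_{\IA_{i-1}}|V_i|$, exploit the exponential decay of $\E[\1_{\IA_{i-1}}|V_i|^2]$ coming from the Lyapunov estimates of Proposition~\ref{prop:convrate}, and observe that the resulting constant vanishes as $r_0 \to 0$ since $f(y)=\grad f(y)=0$. Two technical points are glossed over, though. First, the asserted global equivalence $c_1|v|^2 \le \Psi(x,v) \le c_2(f(x)+|v|^2)$ is strictly stronger than Lemma~\ref{lem:Epositive}, which only yields $\Psi\ge 0$ when $ab\ge C_L$; your lower bound needs $ab>C_L$ \emph{strictly} (achievable via Lemma~\ref{lem:feasible1}), plus the inverse PL $|\grad f(x)|^2\le 2C_L f(x)$ holding on all of $\R^d$ — true here because $y$ is a global minimum and $\grad f$ is globally $C_L$-Lipschitz, so Lemma~\ref{rem:Lipschitz} applies with $r=\infty$. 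The paper instead tracks $\E[\1_{\IA_{n-1}}E_n]$, $\E[\1_{\IA_{n-1}}f(X_n)]$ and $\E[\1_{\IA_{n-1}}|V_n|^2]$ separately using Young's inequality~\eqref{eq:Yconv2}, which avoids the strictness requirement. Second, $f(X_0)\le \frac{C_L}{2}|X_0-y|^2$ is not a consequence of Lemma~\ref{rem:Lipschitz} (whose conclusion is the inverse PL $|\grad f(x)|^2\le 2C_L f(x)$); it comes from the descent estimate~\eqref{eq:Taylorp1} applied at $y$ with $\grad f(y)=0$. Neither issue is fatal, but both should be justified rather than asserted.
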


	Our results are based on the following theorem that derives the exponential rate of convergence conditioned on solving a constrained optimization problem. 
	
	\begin{theorem} \label{rem:constrained1} Let $\gamma_n \equiv \gamma$ for a $\gamma >0$. Let $a, b \ge 0$ and assume that
		\begin{align} \begin{split} \label{eq:constrained}
				0 \ge & - 1 +\gamma \Bigl(\frac b2-a\Bigr) + \gamma^2 C_L + \gamma^3 \frac{C_L a}{2},\\
				0 \ge &a \mu+ ab-a^2-2L +\gamma \Bigl( \frac{b \sigma }{2}+a(2C_L-\mu b+\mu a) -2L(a-\frac b2) \Bigr)  \\
				&+ \gamma^2 C_L \Bigl( \sigma +a^2-2a\mu +2L \Bigr)  + \gamma^3 C_L a \Bigl( \frac{\sigma}{2}-a \mu+L  \Bigr),\\
				0 \ge &C_L - \frac{b}{2} (\mu+a-b) +\gamma \Bigl( \frac{b\mu^2}{2} + \frac{C_L a}{2}-2C_L\mu +\frac{ba\mu}{2}-\frac{b^2\mu}{2}+C_Lb \Bigr) \\
				&+\gamma^2 C_L \Bigl(  \mu^2-a\mu +\frac{ba}{2}-b\mu \Bigr)  + \gamma^3 \frac{C_La\mu}{2} ( \mu-b ) ,\\
				0 \le & \delta:= 1+\gamma(a-\mu-b)+ \gamma^2(b\mu-a\mu-2C_L)+\gamma^3(2C_L\mu-C_La)+\gamma^4 C_La\mu,\\
				& \qquad \qquad \qquad \quad ab \ge  C_L \quad \text{ and } \quad \gamma \le \frac{b}{C_L}.
			\end{split}
		\end{align}
		Then, under the assumptions of Theorem~\ref{theo1} there exists a constant $C \ge 0$ such that
		$$
		\E[\1_{\IA_{n-1}} f(X_{n})] \le \begin{cases}
			C (1+a\gamma)^{-n}, & \text{ if } 1+a\gamma < \delta^{-1} \\
			C \delta ^n, & \text{ if } 1+a\gamma > \delta^{-1} \\
			C(1+a\gamma)^{-n}n, & \text{ if } 1+a\gamma = \delta ^{-1}
		\end{cases}, \quad \text{ for all } n \in \N.
		$$
	\end{theorem}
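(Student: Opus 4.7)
The strategy is a Lyapunov function argument. I would introduce a Lyapunov functional of the form
\[
\mathcal{E}_n = f(X_n) + \tfrac{b}{2}|V_n|^2 + a\,(\text{cross term in } V_n \text{ and } \nabla f(X_n)),
\]
with $a,b$ the parameters from \eqref{eq:constrained}. The side conditions $ab\ge C_L$ and $\gamma\le b/C_L$, together with the inverse PL-inequality $|\nabla f(x)|^2\le 2C_L f(x)$ from Lemma~\ref{rem:Lipschitz}, will ensure that $\mathcal{E}_n\ge 0$ and is comparable to $f(X_n)+|V_n|^2$. Inserting the MSGD update \eqref{eq:MSGDintro} into $\mathcal{E}_{n+1}$ and using (i) the Taylor bound $f(X_{n+1})\le f(X_n)+\langle\nabla f(X_n),X_{n+1}-X_n\rangle+\tfrac{C_L}{2}|X_{n+1}-X_n|^2$, (ii) the martingale identity $\E[D_{n+1}\mid\cF_n]=0$, (iii) the variance bound $\E[|D_{n+1}|^2\mid\cF_n]\le\sigma f(X_n)$ on $\IA_n$, and (iv) the two-sided PL-type bound $2L f(X_n)\le|\nabla f(X_n)|^2\le 2C_L f(X_n)$, I would expand $\E[\mathcal{E}_{n+1}\mid\cF_n]$ as a polynomial in $\gamma$ (up to order four, matching the degree of $\delta$) whose coefficients are linear combinations of the three monomials $f(X_n)$, $|V_n|^2$, $\langle V_n,\nabla f(X_n)\rangle$.

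Grouping by these three monomials, the three polynomial inequalities in \eqref{eq:constrained} would appear as the non-positivity requirements on the coefficients of $f(X_n)$, $|V_n|^2$, and $\langle V_n,\nabla f(X_n)\rangle$ in $\E[\mathcal{E}_{n+1}\mid\cF_n]-\delta\mathcal{E}_n$, where $\delta$ is the quartic polynomial defined in the fourth line of \eqref{eq:constrained}. This yields the one-step contraction $\E[\1_{\IA_n}\mathcal{E}_{n+1}]\le\delta\,\E[\1_{\IA_{n-1}}\mathcal{E}_n]$ (via $\1_{\IA_n}\le\1_{\IA_{n-1}}$ and the tower property), hence $\E[\1_{\IA_{n-1}}\mathcal{E}_n]\le C\delta^n$.

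To pass from this Lyapunov decay to a bound on $f(X_n)$ itself, I would separately control $\E[\1_{\IA_{n-1}}|V_n|^2]$: iterating the velocity recursion, the friction factor $1-\gamma\mu$ combined with the algebraic coupling to $\nabla f$ forces a decay of order $(1+a\gamma)^{-n}$, plus a driver from $f(X_{n-1})$. Combined with the Lyapunov contraction and the comparability of $\mathcal{E}_n$ to $f(X_n)$ modulo a kinetic correction, this produces a scalar inequality
\[
u_{n+1}\le \delta\, u_n + C\,(1+a\gamma)^{-(n+1)},\qquad u_n:=\E[\1_{\IA_{n-1}}f(X_n)],
\]
whose elementary solution distinguishes the three cases depending on the sign of $1-\delta(1+a\gamma)$ and gives exactly the stated rates $(1+a\gamma)^{-n}$, $\delta^n$, and $n(1+a\gamma)^{-n}$.

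The main obstacle is the algebraic bookkeeping in the first step: matching the $\gamma$-expansion of $\E[\mathcal{E}_{n+1}\mid\cF_n]-\delta\mathcal{E}_n$ term-by-term against the three polynomials in \eqref{eq:constrained} requires the right Lyapunov ansatz --- in particular, the precise form of the cross term and its coupling to $a,b,\mu$ --- which is the content of the auxiliary Lemma~\ref{lem:49378}. A secondary subtlety is establishing the $(1+a\gamma)^{-n}$-decay of $\E[\1_{\IA_{n-1}}|V_n|^2]$ with the correct constants so that it feeds cleanly into the scalar recursion for $u_n$ and does not deteriorate the $\delta^n$-rate coming from the Lyapunov.
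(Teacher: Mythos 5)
Your overall architecture --- a Lyapunov functional quadratic in $V_n$ and $\nabla f(X_n)$, a $\gamma$-polynomial coefficient matching to obtain the one-step contraction with factor $\delta$, and a scalar recursion whose solution distinguishes the three cases --- is the same as in the paper (Proposition~\ref{prop:SHB}). However, several details differ in ways that are not merely cosmetic, and one step in your argument would not go through.

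First, the precise ansatz matters for matching \eqref{eq:constrained}: the paper's functional is $E_n = a f(X_n) + \langle \nabla f(X_n), V_n\rangle + \tfrac b2|V_n|^2$, with $a$ attached to $f$ and coefficient $1$ on the cross term, not $a$ on the cross term. You also conflate the roles of the two side conditions: $ab\ge C_L$ is what makes $E_n\ge 0$ (this is Lemma~\ref{lem:Epositive}, not Lemma~\ref{lem:49378}, which concerns the $\gamma\to 0$ rate and has nothing to do with the Lyapunov ansatz), while $\gamma\le b/C_L$ serves a completely separate purpose discussed below.

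Second, and more seriously, your passage from the Lyapunov contraction to the bound on $u_n:=\E[\1_{\IA_{n-1}}f(X_n)]$ does not work as stated. You propose to ``separately control $\E[\1_{\IA_{n-1}}|V_n|^2]$'' by iterating the velocity recursion and claim it decays like $(1+a\gamma)^{-n}$. That is false: the velocity is driven at each step by $-\gamma\nabla f(X_n)+\gamma D_{n+1}$, both nonzero, and its only available rate of decay is the Lyapunov rate $\delta^n$ inherited from $E_n$, not an independent $(1+a\gamma)^{-n}$ coming from friction alone. The paper's actual route is quite different and more economical: Taylor-expand $f(X_{n+1})$ around $X_n$, writing the increment via $\gamma\langle\nabla f(X_{n+1}), V_{n+1}\rangle$, then add $a\gamma f(X_{n+1})+\tfrac b2\gamma|V_{n+1}|^2$ to both sides to recognize $\gamma E_{n+1}$ on the right. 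This yields
\begin{align*}
	(1+a\gamma)\,\E[\1_{\IA_n} f(X_{n+1})] \le \E\Bigl[\1_{\IA_n}\Bigl(f(X_n)+\gamma E_{n+1}+\Bigl(\tfrac{C_L}{2}\gamma^2-\tfrac b2\gamma\Bigr)|V_{n+1}|^2\Bigr)\Bigr],
\end{align*}
and it is exactly here that $\gamma\le b/C_L$ is used --- it makes the last $|V_{n+1}|^2$-coefficient non-positive so that term can be discarded, not controlled. This gives $u_{n+1}\le(1+a\gamma)^{-1}u_n + \frac{\gamma}{1+a\gamma}\delta^{n+1}\E[\1_{\IA_0}E_0]$, whose homogeneous part is $(1+a\gamma)^{-1}$, not $\delta$ as in your inequality; solving it produces the three stated cases. (Your scalar inequality, with the two rates swapped, would incidentally also give the same three-case answer by symmetry of the convolution, but you have not supplied a valid derivation of it.)
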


	For fixed $\mu >0$ and sufficiently small $\gamma$ one can choose parameters $a,b > 0$ such that all inequalities above are satisfied and $\delta<1$. This will be made precise in the forthcoming analysis, see Proposition~\ref{prop:convrate} and Lemma~\ref{lem:feasible1}.

\begin{remark} \label{rem:SGDcomparison} 
	We compare the convergence rate for MSGD proven in Theorem~\ref{rem:constrained1} to the convergence rate for SGD found in \cite{wojtowytsch2021stochastic} which agrees with the results in \cite{karimi2016linear} in the noiseless case $\sigma=0$ (see also~\cite{vaswani2019fast, khaledbetter}).
	Theorem~\ref{rem:constrained1} shows that for all $\eps>0$ one has
	\begin{align} \label{eq:rate}
		\limsup_{n \to \infty } (r_{\operatorname{MSGD}}-\eps)^n \E[\1_{\IA_{n-1}} f(X_{n})] = 0,
	\end{align}
	where $r_{\operatorname{MSGD}}$ is the maximal value of $r(a,b):= \min(1+a\gamma, \delta^{-1})$ for all $a,b \ge 0$ such that \eqref{eq:constrained} holds.
	\cite{wojtowytsch2021stochastic} gives a convergence rate for SGD under the same assumptions on the objective function and the stochastic noise, in the sense of \eqref{eq:rate}, of $r_{\operatorname{SGD}}=1-2L\gamma + \gamma^2 \frac{C_L(2L+\sigma)}{2}$ for all step-sizes satisfying $\gamma < \frac{2L}{2L+\sigma} \frac{2}{C_L}$. 
	
	First, we fix $\gamma = 0.01$, which is a popular default value for the step-size \cite{bengio2012practical}, and compare the rate $r_{\operatorname{SGD}}$ for SGD with the rate $r_{\operatorname{MSGD}}$ for MSGD with optimally chosen friction parameter $\mu^*$ in the noiseless case (Figure~\ref{fig1}), as well as for high noise intensity (Figure~\ref{fig2}). 
	
	We observe that in the noiseless case $\sigma=0$, MSGD outperforms SGD for flat objective functions, i.e. for small $L$. For high noise intensity $\sigma =100$, MSGD is more robust. While SGD converges only when the condition number is small ($\kappa < 4 $), MSGD can adapt to the noise intensity and converges with an exponential rate in all given scenarios.
	
	\begin{figure}[H]
		\centering
		\begin{subfigure}{0.32\textwidth}
			\centering
			\includegraphics[width=\linewidth]{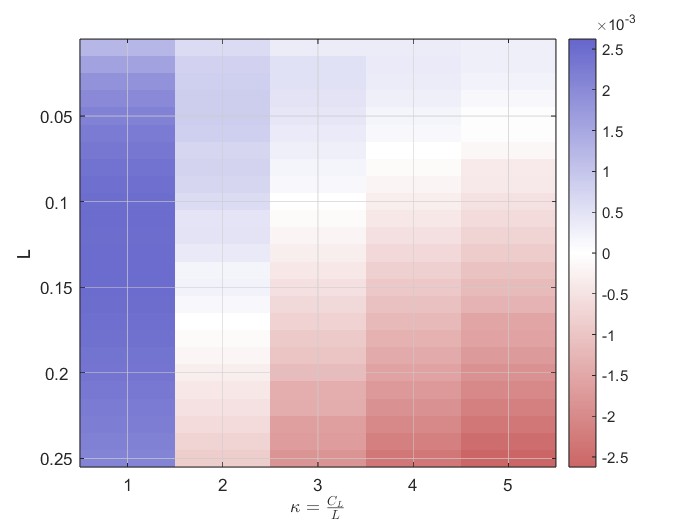}
			\caption*{$r_{\text{MSGD}}-r_{\text{SGD}}$\\
				\ }
			\label{fig1:graph1}
		\end{subfigure}
		\hfill
		\begin{subfigure}{0.32\textwidth}
			\centering
			\includegraphics[width=\linewidth]{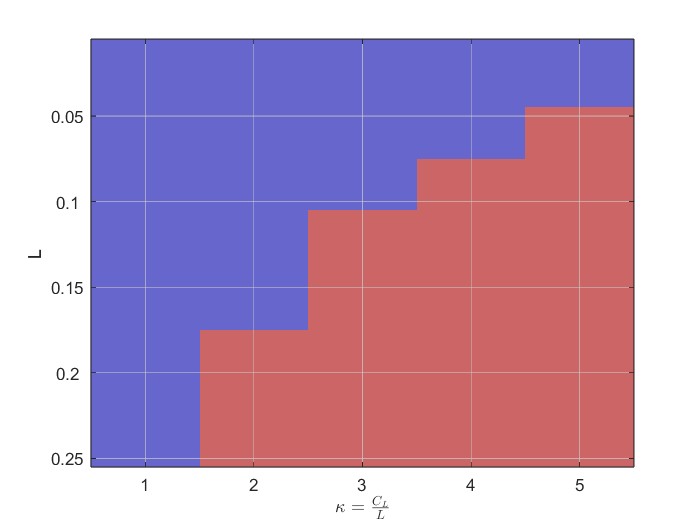}
			\caption*{{\blue Blue}: $r_{\text{MSGD}}> r_{\text{SGD}}$ \\
				{\red Red}: $r_{\text{MSGD}}\le  r_{\text{SGD}}$
			}
			\label{fig1:graph2}
		\end{subfigure}
		\hfill
		\begin{subfigure}{0.32\textwidth}
			\centering
			\includegraphics[width=\linewidth]{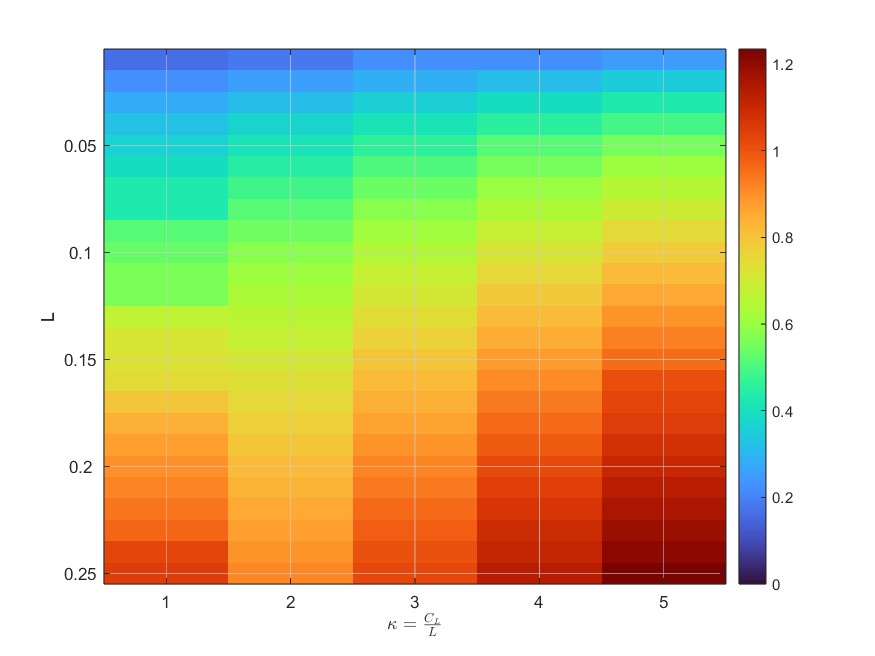}
			\caption*{Optimal friction $\mu^*$ \\ \ }
			\label{fig1:graph3}
		\end{subfigure}
		
		\caption{Comparison of the convergence rate $r_{\text{MSGD}}$ for MSGD and the convergence rate $r_{\text{SGD}}$ for SGD in the sense of \eqref{eq:rate} for fixed $\gamma=0.01$ and $\sigma=0$, different values of $L$ ($y$-axis) and $\kappa = \frac{C_L}{L}$ ($x$-axis) and optimally chosen friction parameter $\mu^*$. {\blue Blue} represents an outperformance of MSGD, {\red red} represents an outperformance of SGD.}
		\label{fig1}
	\end{figure}
	
	\begin{figure}[H] 
		\centering
		\begin{subfigure}{0.32\textwidth}
			\centering
			\includegraphics[width=\linewidth]{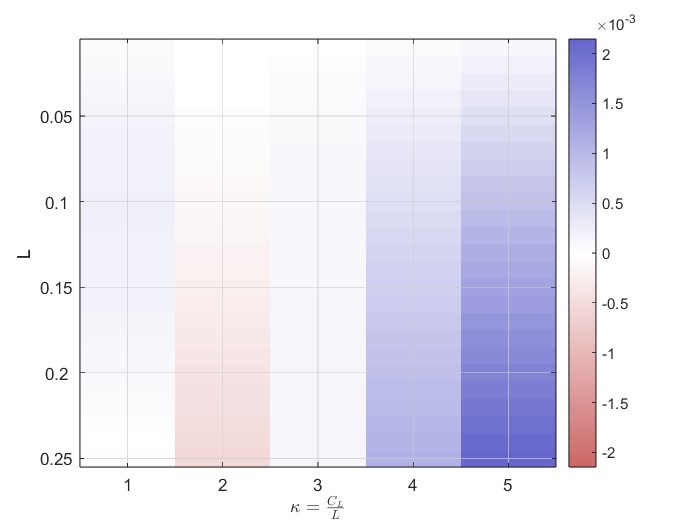}
			\caption*{$r_{\text{MSGD}}-r_{\text{SGD}}$\\ \ }
			\label{fig2:graph1}
		\end{subfigure}
		\hfill
		\begin{subfigure}{0.32\textwidth}
			\includegraphics[width=\linewidth]{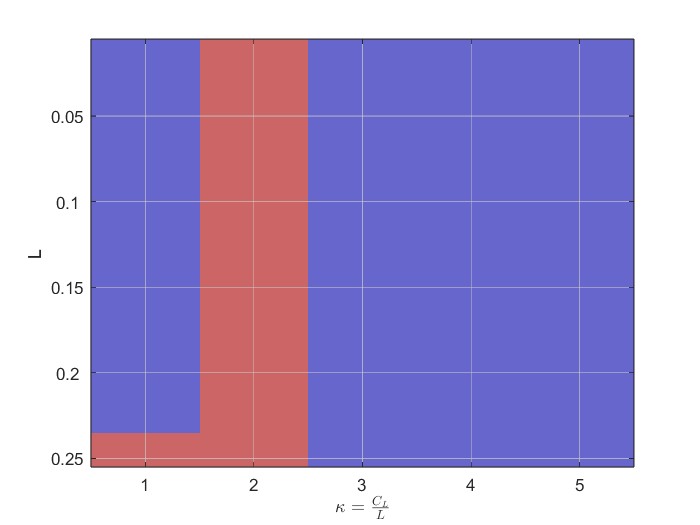}
			\caption*{{\blue Blue}: $r_{\text{MSGD}}> r_{\text{SGD}}$ \\
				{\red Red}: $r_{\text{MSGD}}\le  r_{\text{SGD}}$
			}
			\label{fig2:graph2}
		\end{subfigure}
		\hfill
		\begin{subfigure}{0.32\textwidth}
			\centering
			\includegraphics[width=\linewidth]{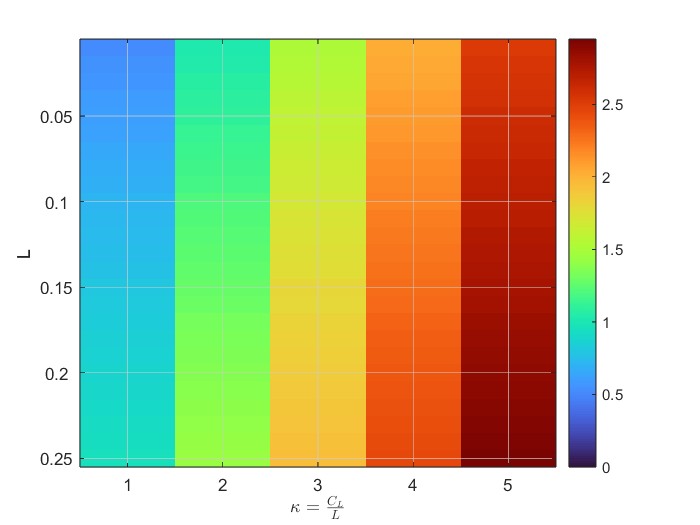}
			\caption*{Optimal friction $\mu^*$ \\ \ }
			\label{fig2:graph3}
		\end{subfigure}	
		\caption{Comparison of the convergence rate $r_{\text{MSGD}}$ for MSGD and the convergence rate $r_{\text{SGD}}$ for SGD in the sense of \eqref{eq:rate} for fixed $\gamma=0.01$ and $\sigma=100$, different values of $L$ ($y$-axis) and $\kappa = \frac{C_L}{L}$ ($x$-axis) and optimally chosen friction parameter $\mu^*$. 
			For $\kappa \ge 4$ one has $r_{\text{SGD}}<1$ so that SGD does not converge.}
		\label{fig2}
	\end{figure}
	
	\begin{figure}[t] 
		\centering
		\begin{subfigure}{0.32\textwidth}
			\centering
			\includegraphics[width=\linewidth]{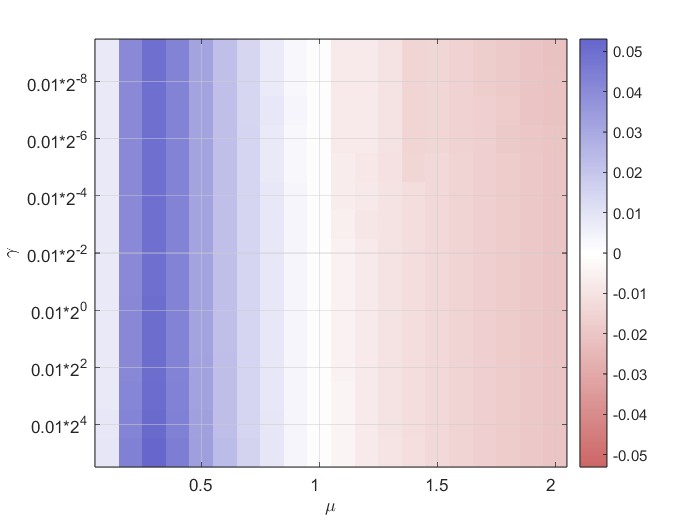}
			\caption*{$\frac{r_{\text{MSGD}}-r_{\text{SGD}}}{\gamma}$ for $\sigma=0$}
			\label{fig3:graph1}
		\end{subfigure}
		\hspace{2cm}
		\begin{subfigure}{0.32\textwidth}
			\centering
			\includegraphics[width=\linewidth]{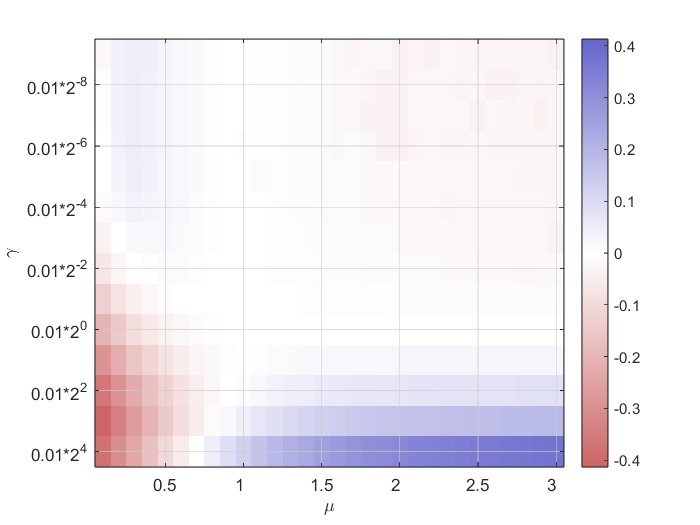}
			\caption*{$\frac{r_{\text{MSGD}}-r_{\text{SGD}}}{\gamma}$ for $\sigma=100$}
			\label{fig3:graph2}
		\end{subfigure}
		\caption{Comparison of the convergence rate $r_{\text{MSGD}}$ for MSGD and the convergence rate $r_{\text{SGD}}$ for SGD in the sense of \eqref{eq:rate} for fixed $L=\frac 1 {50}$, $C_L=\frac{3}{50}$ and different values of $\gamma$ ($y$-axis) and $\mu$ ($x$-axis). The figure shows the value $(r_{\text{MSGD}}-r_{\text{SGD}})/\gamma$.}
		\label{fig3}
	\end{figure}
	
	Note that $r_{\operatorname{MSGD}}$ is a rigorous, theoretical upper bound on the convergence rate of $\E[\1_{\IA_{n-1}}f(X_n)]$. In order to derive $r_{\operatorname{MSGD}}$ one has to solve a constrained optimization task, see Theorem~\ref{rem:constrained1}.	This constrained optimization task is executed by the \emph{fmincon} function in \emph{Matlab} using the interior point method. Therefore, it may be the case that $r_{\operatorname{MSGD}}$ is underestimated in Figure~\ref{fig1} and Figure~\ref{fig2}.

	We also compare $r_{\text{MSGD}}$ with $r_{\text{SGD}}$ for fixed problem parameters $L=\frac{1}{50}$, $C_L=\frac{3}{50}$ and $\sigma=0$, respectively $\sigma = 100$, see Figure~\ref{fig3}. We observe that, in the small step-size regime and with no stochastic noise ($\sigma=0$), there is a large interval of friction parameters that lead to an outperformance of MSGD over SGD. For large noise intensity ($\sigma=100$), the outperformance of MSGD is most notable in the mid step-size regime. 
	
	In particular, large step-sizes lead to large noise intensity in the corresponding continuous-in-time model which is shown to outperform continuous-in-time SGD in this scenario, see Remark~\ref{rem:compSDE}. However, this heuristic comparison is only feasible for sufficiently small step-sizes.
\end{remark}

\begin{remark}  \label{rem:Lip1}
	We discuss how one can weaken the global Lipschitz assumption on $\grad f$ if the stochastic noise is almost surely bounded. 
	Let $(\IA_n)_{n \in \N_0}$ be given by $\IA_n=\{X_i \in B_r(y) \text{ for all } i=0, \dots, n\}$ for a global minimum $y \in \R^d$ and assume that there exists a $\bar \gamma >0$ with $\sup_{n \in \N}\gamma_n \le \bar \gamma$ and $\bar \gamma \mu \le 1$. Moreover, assume that
	there exist deterministic constants $C_f, C_D\ge 0$ such that $|\grad f(x)|\le C_f$ for all $x \in B_r(y)$ and,
	for all $n \in \N_0$, we have $|D_{n+1}|\le C_D$ almost surely on $\{X_n \in B_r(y)\}$.
	Set $C_V=\frac{C_f+C_D}{\mu}$ and assume that $|V_0|\le C_V$ almost surely.
	Then, in all of the above statements it suffices to assume that $\grad f$ is $C_L$-Lipschitz continuous on $B_{(r+\bar \gamma C_V) \vee 2r}(y)$  and $0\le f(x)$ for all $x \in B_{r+\bar \gamma C_V}(y)$.
	
	Indeed, a simple induction argument shows that, for all $n \in \N_0$, $|V_{n+1}|\le C_V$ and, thus, $X_{n+1} \in B_{r+\bar \gamma C_V}(y)$ almost surely on the event $\IA_{n-1}$. Now, all Taylor estimates, see e.g. \eqref{eq:273856230}, hold under the assumption that $\grad f$ is $C_L$-Lipschitz continuous on $B_{r+\bar \gamma C_V}(y)$. Moreover, the Lipschitz continuity of $\grad f$ on $B_{2r}(y)$ implies the inverse PL-inequality on $B_r(y)$, see Lemma~\ref{rem:Lipschitz}.
\end{remark}

\subsection{Lyapunov estimates}
Let $a,b >0$ and let $(E_n)_{n \in \N_0}$ be the $(\cF_n)_{n \in \N_0}$-adapted stochastic process given by
\begin{align} \label{eq:Lyapunov}
	E_n = a f(X_{n}) + \langle \grad f (X_{n}), V_n \rangle +\frac{b}{2} |V_n|^2.
\end{align}
In our setting, $(E_n)_{n \in \N_0}$ plays the role of a random Lyapunov function. Although, in general, $(E_n)_{n \in \N_0}$ might take negative values, assuming the inverse PL-condition there exist choices for $a$ and $b$ such that $(E_n)_{n \in \N_0}$ is a non-negative process.

\begin{lemma} \label{lem:Epositive}
	Let $a, b, C_L>0$ and
	\begin{align*}
		E(x,y) = a f(x) + \langle \grad f(x), y \rangle + \frac{b}{2} |y|^2.
	\end{align*}
	If $ab\ge C_L$ then for all $x \in \R^d$ satisfying $2C_L f(x)\ge |\grad f(x)|^2$ we have $E(x,y)\ge 0$, for all $y \in \R^d$.
\end{lemma}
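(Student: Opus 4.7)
The plan is to view $E(x,y)$ as a quadratic form in $y$ with $x$ fixed, and complete the square. Writing
\begin{align*}
E(x,y) = af(x) + \langle \nabla f(x), y\rangle + \frac{b}{2}|y|^2
= af(x) - \frac{1}{2b}|\nabla f(x)|^2 + \frac{b}{2}\Bigl|y + \frac{1}{b}\nabla f(x)\Bigr|^2,
\end{align*}
the last term is clearly non-negative (since $b>0$), so the sign of $E(x,y)$ is controlled by the $y$-independent part $af(x) - \frac{1}{2b}|\nabla f(x)|^2$.

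The next step is to show that this $y$-independent part is non-negative under the stated hypotheses. Multiplying by $2b > 0$, the needed inequality is $2ab\, f(x) \ge |\nabla f(x)|^2$. By assumption $ab \ge C_L$, and by the inverse PL-bound $|\nabla f(x)|^2 \le 2 C_L f(x)$, so
\begin{align*}
2ab\, f(x) \ge 2 C_L f(x) \ge |\nabla f(x)|^2,
\end{align*}
which finishes the argument for every $y\in\R^d$.

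There is no real obstacle here — the argument is a single completion-of-square followed by the direct use of the inverse PL-inequality — so the only thing to be careful about is making clear that the hypothesis $2C_L f(x)\ge |\nabla f(x)|^2$ is used precisely to ensure that the constant (in $y$) part of the quadratic is non-negative, and that $b>0$ is what allows the completion of the square. No further assumption on the sign of $f(x)$ or on $y$ is needed.
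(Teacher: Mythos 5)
Your proof is correct and is essentially the same argument as the paper's, phrased a bit more cleanly. The paper introduces the ratio $\rho = |y|/|\grad f(x)|$ (after bounding $af(x)$ from below via the inverse PL-inequality and the cross term via Cauchy--Schwarz), then minimizes the resulting quadratic $\varphi(\rho)=\frac{a}{2C_L}-\rho+\frac{b}{2}\rho^2$ at $\rho=1/b$ and checks $\varphi(1/b)\ge 0$; this requires a separate trivial case $\grad f(x)=0$. Your completion-of-the-square in $y$ reaches the identical discriminant condition $2ab\,f(x)\ge|\grad f(x)|^2$ directly and avoids the case split, and your remark that the sign of $f(x)$ needs no extra assumption is justified because the hypothesis $2C_L f(x)\ge|\grad f(x)|^2\ge 0$ together with $C_L>0$ already forces $f(x)\ge 0$, which is the fact you implicitly use when writing $2ab\,f(x)\ge 2C_L f(x)$.
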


\begin{proof}
	If $\grad f(x)=0$ the statement is trivial. If $\grad f(x) \neq 0$, we denote $\rho=\frac{|y|}{|\grad f(x)|}$ and get
	$$
	E(x,y) \ge \Bigl(\frac{a}{2C_L}-\rho+\frac{b}{2}\rho^2 \Bigr) |\grad f(x)|^2.
	$$
	The quadratic function $\varphi(\rho)=\frac{a}{2C_L}-\rho+\frac{b}{2}\rho^2$ attains its global minimum at $\rho=\frac{1}{b}$ and using $ab \ge C_L$ we deduce that
	\begin{align*}
		\varphi\Bigl(\frac 1b\Bigr)=\frac{a}{2C_L}-\frac{1}{2b}\ge 0.
	\end{align*}
\end{proof}

In the next proposition, we derive a convergence statement for the MSGD scheme using the Lyapunov process $(E_n)_{n \in \N_0}$.

\begin{proposition} \label{prop:SHB}
	Let $L,a,b>0, \sigma \ge 0$. Let $(\IA_n)_{n \in \N_0}$ be a decreasing sequence of events such that, for all $n \in \N_0$, $\IA_n \in \cF_n$ and on $\IA_n$ it holds that
	\begin{align} \label{eq:1293182}
		|\grad f(X_n)|^2 \ge 2L f(X_n) \quad \text{and} \quad \E[|D_{n+1}|^2 | \cF_n] \le \sigma f(X_n).
	\end{align}
	Let $(\alpha_n)_{n \in \N}, (\beta_n)_{n \in \N}, (\delta_n)_{n \in \N}$ and $(\epsilon_n)_{n \in \N}$ be given by $\eqref{eq:alpha}$. Assume that $(\beta_n)_{n \in \N}$, $(\alpha_{n+1}-a\delta_{n+1}+2\beta_{n+1}L)_{n \in \N_0}$, $(\epsilon_{n+1}-\frac{b}{2} \delta_{n+1})_{n \in \N_0}$ and $(\frac{C_L}{2} \gamma_{n}^2- \frac{b}{2}\gamma_{n})_{n \in \N_0}$ are sequences of non-positive reals and $(\delta_n)_{n \in \N}$ is a sequence of non-negative reals.
	Moreover, if $\P(\bigcap_{n \in \N_0} \IA_n)<\P(\IA_0)$ additionally assume that $ab \ge C_L$.
	Then, for all $n \in \N$ it holds that
	\begin{align}\begin{split} \label{eq:238477}
			& \E[\1_{\IA_{n-1}} f(X_{n})]\le \Bigl(\prod_{i=1}^{n} (1+a\gamma_{i})^{-1}\Bigr) \\
			& \qquad \qquad \qquad \Bigl( \E[\1_{\IA_0} f(X_0)] + \sum_{i=1}^{n}  \frac{\gamma_{i}}{1+a\gamma_{i}} \Bigl(\prod_{j=1}^{i} (1+a\gamma_{j})\Bigr) \Bigl(\prod_{j=1}^{i}\delta_{j} \Bigr)\E[\1_{\IA_0}E_0] \Bigr).
		\end{split} 
	\end{align}	
\end{proposition}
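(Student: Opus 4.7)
The plan is to combine two one-step estimates via a discrete variation-of-constants argument. The first is the Lyapunov contraction
\[ \E[E_{n+1} \mid \cF_n] \le \delta_{n+1} E_n \quad \text{on } \IA_n \qquad \text{(B)}, \]
and the second is the pathwise comparison
\[ (1+a\gamma_{n+1}) f(X_{n+1}) \le f(X_n) + \gamma_{n+1} E_{n+1} \qquad \text{(A)}. \]
Setting $P_n := \prod_{i=1}^n (1+a\gamma_i)$ and $Q_n := \prod_{i=1}^n \delta_i$, iterating (B) (using $\IA_n \in \cF_n$, $\IA_n \subset \IA_{n-1}$, and $E_n \ge 0$) yields $\E[\1_{\IA_n} E_{n+1}] \le Q_{n+1} \E[\1_{\IA_0} E_0]$. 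Taking expectation of (A) against $\1_{\IA_n}$ and substituting produces the scalar recursion
\[ (1+a\gamma_{n+1}) \E[\1_{\IA_n} f(X_{n+1})] \le \E[\1_{\IA_{n-1}} f(X_n)] + \gamma_{n+1} Q_{n+1} \E[\1_{\IA_0} E_0], \]
and multiplying through by $P_n$ and telescoping gives exactly \eqref{eq:238477}.

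\textbf{Proof of (B).} The descent lemma applied to $f$ together with the Lipschitz bound $|\grad f(X_{n+1})-\grad f(X_n)| \le C_L\gamma_{n+1}|V_{n+1}|$ applied to $\langle\grad f(X_{n+1}),V_{n+1}\rangle$ yields
\[ E_{n+1} \le a f(X_n) + (1+a\gamma_{n+1})\langle \grad f(X_n), V_{n+1}\rangle + \bigl(\tfrac{b}{2}+C_L\gamma_{n+1}+\tfrac{aC_L}{2}\gamma_{n+1}^2\bigr)|V_{n+1}|^2. \]
Plugging the MSGD update $V_{n+1} = (1-\gamma_{n+1}\mu)V_n - \gamma_{n+1}\grad f(X_n) + \gamma_{n+1}D_{n+1}$ into the right-hand side, taking conditional expectation (using $\E[D_{n+1}\mid\cF_n]=0$), and invoking $\E[|D_{n+1}|^2\mid\cF_n] \le \sigma f(X_n)$ on $\IA_n$ reorganizes the estimate into
\[ \E[E_{n+1}\mid\cF_n] \le \alpha_{n+1}f(X_n) + \beta_{n+1}|\grad f(X_n)|^2 + \delta_{n+1}\langle \grad f(X_n),V_n\rangle + \epsilon_{n+1}|V_n|^2 \quad \text{on } \IA_n, \]
with coefficients matching \eqref{eq:alpha}. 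Subtracting $\delta_{n+1}E_n$ exactly cancels the cross term $\langle \grad f, V\rangle$; combining $\beta_{n+1} \le 0$ with the PL-inequality $|\grad f(X_n)|^2 \ge 2Lf(X_n)$ on $\IA_n$ gives $\beta_{n+1}|\grad f(X_n)|^2 \le 2L\beta_{n+1}f(X_n)$. The sign hypotheses $\alpha_{n+1}-a\delta_{n+1}+2L\beta_{n+1} \le 0$ and $\epsilon_{n+1}-\tfrac{b}{2}\delta_{n+1} \le 0$ then render the remainder non-positive, establishing (B).

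\textbf{Proof of (A) and main obstacle.} The key observation for (A) is the \emph{symmetric} form of Taylor's theorem: writing $f(X_{n+1})-f(X_n)=\int_0^1\langle\grad f(X_n+t(X_{n+1}-X_n)),X_{n+1}-X_n\rangle\,dt$ and Lipschitz-expanding each gradient around $X_{n+1}$ (rather than $X_n$) produces
\[ f(X_{n+1}) \le f(X_n) + \gamma_{n+1}\langle \grad f(X_{n+1}), V_{n+1}\rangle + \tfrac{C_L}{2}\gamma_{n+1}^2|V_{n+1}|^2, \]
after which a direct rearrangement yields
\[ (1+a\gamma_{n+1})f(X_{n+1}) - f(X_n) - \gamma_{n+1} E_{n+1} \le \gamma_{n+1}\bigl(\tfrac{C_L}{2}\gamma_{n+1} - \tfrac{b}{2}\bigr)|V_{n+1}|^2 \le 0 \]
by the step-size hypothesis $\tfrac{C_L}{2}\gamma_n^2-\tfrac{b}{2}\gamma_n \le 0$. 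The principal subtlety lies in propagating $\E[\1_{\IA_n} E_n] \le \E[\1_{\IA_{n-1}} E_n]$ inside the iteration of (B), which requires $E_n \ge 0$ pathwise. When $\P(\bigcap_n\IA_n)=\P(\IA_0)$, the indicators $\1_{\IA_n}$ agree almost surely and the step is trivial; otherwise, Lemma~\ref{lem:Epositive} combined with the global inverse PL-inequality $|\grad f|^2 \le 2C_Lf$ (Lemma~\ref{rem:Lipschitz} applied globally, since $\grad f$ is $C_L$-Lipschitz on $\R^d$ with $\inf f=0$) and the assumption $ab \ge C_L$ deliver $E_n \ge 0$ pointwise. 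The remaining effort is the routine but careful algebraic bookkeeping in (B) required to identify the polynomial coefficients $\alpha_{n+1},\beta_{n+1},\delta_{n+1},\epsilon_{n+1}$ of \eqref{eq:alpha} and to verify the exact cancellation of the cross term.
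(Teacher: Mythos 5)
Your proposal is correct and follows essentially the same route as the paper: derive a one-step contraction for the Lyapunov process $E_n$ by substituting the MSGD update and taking conditional expectation, use the positivity of $E_n$ (Lemma~\ref{lem:Epositive} with $ab \ge C_L$, together with the inverse PL-inequality) to pass from $\1_{\IA_n}$ to $\1_{\IA_{n-1}}$ and iterate, then establish the pathwise comparison $(1+a\gamma_{n+1})f(X_{n+1}) \le f(X_n)+\gamma_{n+1}E_{n+1}$ via a Taylor estimate that places $\grad f(X_{n+1})$ in the cross term, and finally telescope. The only cosmetic difference is organizational: the paper derives three separate one-step inequalities for $f(X_{n+1})$, $|V_{n+1}|^2$, and $\langle \grad f(X_{n+1}),V_{n+1}\rangle$ and then combines them, while you substitute the velocity update once into a unified bound for $E_{n+1}$; these are the same computation. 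You also correctly flag that the pathwise estimate (A) requires the \emph{lower} Taylor bound (equivalently, Lipschitz-expanding the gradient around $X_{n+1}$ rather than $X_n$), a point the paper passes over by calling it ``analogous'' to the first application of \eqref{eq:Taylorp1}.
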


\begin{proof}
	In a first step, we derive a convergence rate for the expectation of the Lyapunov process $(E_n)_{n \in \N_0}$. For this, we consider the time evolution of the three summands in \eqref{eq:Lyapunov}, separately.
	
	First, we look at the evolution of $(f(X_{n}))_{n \in \N_0}$.
	Let $x,y \in \R^d$ and note that with the Lipschitz-continuity of $\grad f$ we get
	\begin{align}
		\begin{split}
			\label{eq:Taylorp1}
			f(y)&\le f(x) + \langle \grad f(x),y-x\rangle  + \frac{C_L}{2} |y-x|^2.
		\end{split}
	\end{align}
	
	Now, for $n \in \N_0$ we use \eqref{eq:Taylorp1} with $x=X_{n}$ and $y=X_{n+1}$ and \eqref{eq:MSGDintro} to get 
	\begin{align} \begin{split} 
			\label{eq:273856230}
			&\E[\1_{\IA_n}f(X_{n+1})]\le \E\Big[\1_{\IA_n}\bigl(f(X_{n})-\gamma_{n+1}^2 |\grad f(X_n)|^2 \\
			&\qquad \qquad \qquad \qquad \quad + (\gamma_{n+1}-\gamma_{n+1}^2\mu) \langle \grad f(X_{n}),V_n\rangle + \frac{C_L}{2} \gamma_{n+1}^2 \bigl|V_{n+1}\bigr|^2 \bigr) \Big].
		\end{split}
	\end{align}
	
	Next, we control the evolution of $(|V_n|^2)_{n \in \N_0}$. Using \eqref{eq:1293182}, we get
	\begin{align} \begin{split}
			\E[\1_{\IA_n}|V_{n+1}|^2]
			\le  \E[\1_{\IA_n}(&(1-2\gamma_{n+1}\mu+\gamma_{n+1}^2\mu^2)|V_n|^2+ \gamma_{n+1}^2 |\grad f(X_n)|^2  \\
			&  - 2 (\gamma_{n+1}-\gamma_{n+1}^2\mu) \langle V_n, \grad f(X_n)\rangle  + \gamma_{n+1}^2 \sigma f(X_n))].
		\end{split}
	\end{align}
	Lastly,
	\begin{align} \begin{split} \label{eq:1764443}
			\E[\1_{\IA_n} &\langle \grad f(X_{n+1}), V_{n+1} \rangle] = \E[ \1_{\IA_n} (\langle \grad f(X_{n}), V_{n+1} \rangle + \langle \grad f(X_{n+1})- \grad f(X_{n}), V_{n+1} \rangle )] \\
			&\le \E[\1_{\IA_n} ((1-\gamma_{n+1}\mu) \langle \grad f(X_n), V_n \rangle - \gamma_{n+1}  |\grad f(X_n)|^2 + C_L \gamma_{n+1} |V_{n+1}|^2)].
		\end{split}
	\end{align}
	
	Combining the estimates \eqref{eq:273856230}-\eqref{eq:1764443}, we obtain
	\begin{align*}
		&\E[\1_{\IA_{n}} E_{n+1}] \\
		&\le \E[\1_{\IA_{n}}(\alpha_{n+1} f(X_n) + \beta_{n+1} |\grad f(X_n)|^2 +\delta_{n+1} \langle \grad f(X_n),V_n \rangle + \epsilon_{n+1} |V_n|^2)],
	\end{align*}
	where
	\begin{align} \label{eq:alpha}
		\begin{split}
			\alpha_{n+1} = &a + \Bigl(\frac{b}{2}+C_L \gamma_{n+1}\Bigl(1+\frac{\gamma_{n+1}a}{2}\Bigr)\Bigr)\gamma_{n+1}^2 \sigma, \\
			\beta_{n+1} = &- \gamma_{n+1} - a\gamma_{n+1}^2 + \Bigl(\frac{b}{2}+C_L \gamma_{n+1}\Bigl(1+\frac{\gamma_{n+1}a}{2}\Bigr)\Bigr)\gamma_{n+1}^2,\\
			\delta_{n+1} = &1-\mu\gamma_{n+1}+ a(\gamma_{n+1}-\gamma_{n+1}^2\mu) \\
			&- 2\Bigl(\frac{b}{2}+C_L \gamma_{n+1}\Bigl(1+\frac{\gamma_{n+1}a}{2}\Bigr)\Bigr) (\gamma_{n+1}-\gamma_{n+1}^2\mu), \\
			\epsilon_{n+1} = &\Bigl(\frac{b}{2}+C_L \gamma_{n+1}\Bigl(1+\frac{\gamma_{n+1}a}{2}\Bigr)\Bigr) (1-2\gamma_{n+1}\mu+\gamma_{n+1}^2\mu^2).
		\end{split}
	\end{align}
	By definition
	$
	\E[\1_{\IA_n} \langle \grad f(X_n), V_n \rangle ] =  \E[\1_{\IA_n}(E_n -a f(X_n)-\sfrac{b}{2} |V_n|^2)],
	$
	so that, using the PL-inequality \eqref{eq:1293182} and the fact that $(\beta_n)_{n \in \N}$ is non-positive, we get 
	\begin{align} \begin{split} \label{eq:579999663}
			&\E[\1_{\IA_{n}}E_{n+1}] \\
			&\le \E[\1_{\IA_n}(\delta_{n+1}E_n + (\alpha_{n+1}-a\delta_{n+1}+2\beta_{n+1}L) f(X_n)  + (\epsilon_{n+1}-\frac{b}{2} \delta_{n+1}) |V_n|^2)].
		\end{split}
	\end{align}
	With the assumptions on $(\alpha_{n+1}-a\delta_{n+1}+2\beta_{n+1}L)_{n \in \N_0}$ and $(\epsilon_{n+1}-\frac{b}{2} \delta_{n+1})_{n \in \N_0}$ we have
	$
	\E[\1_{\IA_{n}} E_{n+1}] \le \delta_{n+1} \E[\1_{\IA_{n}} E_n].
	$
	For $n\in \N$, we use Lemma~\ref{lem:Epositive} and the monotonicity of $(\IA_n)_{n \in \N_0}$ in order to show that $\E[\1_{\IA_{n}} E_n]\le \E[\1_{\IA_{n-1}} E_n]$ so that, iteratively, 
	\begin{align} \label{eq:2776362562}
		\E[\1_{\IA_{n-1}} E_{n}] \le \Bigl( \prod_{i=1}^{n}\delta_{i} \Bigr)\E[\1_{\IA_{0}} E_0].
	\end{align}
	Next, we bound the expectation of $(f(X_n))_{n \in \N_0}$ using \eqref{eq:2776362562}. Analogously to \eqref{eq:273856230}, we have for all $n \in \N_0$ that
	\begin{align*}
		\E[\1_{\IA_{n}}f(X_{n+1})]\le \E\Big[\1_{\IA_{n}} \bigl( f(X_{n})+\gamma_{n+1}\langle \grad f(X_{n+1}),V_{n+1}\rangle + \frac{C_L}{2} \gamma_{n+1}^2 \bigl|V_{n+1}\bigr|^2\bigr) \Big]
	\end{align*}
	so that, by definition of $E_{n+1}$,
	\begin{align*} 
		&(1+a\gamma_{n+1})\E[\1_{\IA_{n}}f(X_{n+1})]\\
		&\le \E\Big[ \1_{\IA_{n}} \bigl(f(X_{n})+\gamma_{n+1} E_{n+1} + (\frac{C_L}{2} \gamma_{n+1}^2- \frac{b}{2}\gamma_{n+1}) \bigl|V_{n+1}\bigr|^2 \bigr)\Big].
	\end{align*}
	By assumption, $(\frac{C_L}{2} \gamma_{n}^2- \frac{b}{2}\gamma_{n})_{n \in \N}$ is a sequence of non-positive reals. Therefore, we can neglect the last term in the upper bound above and get
	\begin{align} \label{eq:21378861}
		\E[\1_{\IA_{n}}f(X_{n+1})] \le (1+a\gamma_{n+1})^{-1} \E[\1_{\IA_n} f(X_{n})]+\frac{\gamma_{n+1}}{1+a\gamma_{n+1}} \Bigl( \prod_{i=1}^{n+1}\delta_{i} \Bigr)\E[\1_{\IA_0}E_0].
	\end{align}
	Using the non-negativity of $f(X_n)$, the monotonicity of $(\IA_n)_{n \in \N_0}$ and \eqref{eq:21378861}, one can inductively show that, for all $n \in \N$,
	\begin{align*}
		&\E[\1_{\IA_{n-1}} f(X_{n})] \\
		&\le \Bigl(\prod_{i=1}^{n} (1+a\gamma_{i})^{-1}\Bigr) \Bigl( \E[\1_{\IA_0} f(X_0)] + \sum_{i=1}^{n}  \frac{\gamma_{i}}{1+a\gamma_{i}} \Bigl(\prod_{j=1}^{i} (1+a\gamma_{j})\Bigr) \Bigl(\prod_{j=1}^{i}\delta_{j} \Bigr)\E[\1_{\IA_0}E_0] \Bigr).
	\end{align*}
\end{proof}

\begin{proof} [Proof of Theorem~\ref{rem:constrained1}] 
	Applying Proposition~\ref{prop:SHB} in the case of a constant sequence of step-sizes $\gamma_n \equiv \gamma >0$, the assumptions on the parameters read exactly as in Theorem~\ref{rem:constrained1}.
	Now, for parameters $a,b,\mu, \gamma$ that satisfy all of the inequalities stated in Theorem~\ref{rem:constrained1} and under the remaining assumptions of Proposition~\ref{prop:SHB}, we get for all $n\in \N$ that
	\begin{align*}
		\E[\1_{\IA_{n-1}} f(X_{n})] \le (1+a\gamma)^{-n} \Bigl( \E[\1_{\IA_0} f(X_0)] + \frac{\gamma}{1+a\gamma} \E[\1_{\IA_0}E_0] \sum_{i=1}^{n}   ( (1+a\gamma) \delta)^i \Bigr).
	\end{align*}
	Thus, if $(1+a\gamma)\delta =1$ we get for a constant $C \ge 0$ that
	$$
	\E[\1_{\IA_{n-1}} f(X_{n})] \le C(1+a\gamma)^{-n}n.
	$$
	If $(1+a\gamma) \delta \neq 1$ we have
	$$
	\sum_{i=1}^n ((1+a\gamma)\delta)^i = \frac{1-\bigl((1+a\gamma)\delta \bigr)^{n+1}}{1-(1+a\gamma)\delta}.
	$$
\end{proof}

\subsection{The small step-size case}
In this section, we consider the situation of sufficiently small step-sizes $(\gamma_n)_{n \in \N}$ and prove the main results for the MSGD process, Theorem~\ref{theo1} and Theorem~\ref{theo2}.

\begin{proposition} \label{prop:convrate}
	Let $L, a,b,\mu > 0$ and $\sigma \ge 0$. Let $(\IA_n)_{n \in \N_0}$ be a decreasing sequence of events such that, for all $n \in \N_0$, $\IA_n \in \cF_n$ and on $\IA_n$ it holds that
	\begin{align*} 
		|\grad f(X_n)|^2 \ge 2L f(X_n)  \quad \text{and} \quad \E[|D_{n+1}|^2 | \cF_n] \le \sigma f(X_n).
	\end{align*} 
	Assume that
	\begin{align} \label{eq:238877}
		\mu-a+b>0 \quad , \quad a\mu-a^2+ab-2L <0 \quad \text{and} \quad C_L -\frac{b}{2}(\mu+a-b)<0.
	\end{align}
	If $\P(\bigcap_{n \in \N_0} \IA_n)< \P(\IA_0)$ we additionally assume that $ab \ge C_L$.
	Then, for every $0<\eps<m:= \min (a, \mu-a+b)$ there exist constants $C, \bar \gamma \ge 0$ such that if $\sup_{n \in \N}\gamma_n \le \bar \gamma$ it holds that 
	\begin{enumerate}
		\item[(i)] 
		$$
		\max (\E[\1_{\IA_{n-1}} f(X_n)], 	\E[\1_{\IA_{n-1}}|V_n|^2] ) \le C \exp(- (m-\eps) t_n)
		$$
		for all $n \in \N$, where $t_n = \sum_{i=1}^n \gamma_i$.
		\item[(ii)] Let $m'<m-\eps$ and assume that $ \sum_{i=0}^\infty \exp((m'-(m-\eps)) t_i)<\infty$. Then $\exp(m' t_n) f(X_n) \to 0$ almost surely on the event $\IA_\infty= \bigcap_{n \in \N_0} \IA_n$.
		\item[(iii)] The process $(X_n)_{n \in \N_0}$ converges almost surely on $\IA_\infty$.
	\end{enumerate}
\end{proposition}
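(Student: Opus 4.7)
The plan is to apply Proposition~\ref{prop:SHB} with the given constants $a$ and $b$. The first task is to verify that for sufficiently small $\bar\gamma$ the sign conditions imposed there on $(\beta_n)$, $(\alpha_{n+1}-a\delta_{n+1}+2\beta_{n+1}L)$, $(\epsilon_{n+1}-\tfrac{b}{2}\delta_{n+1})$, and $(\tfrac{C_L}{2}\gamma_n^{2}-\tfrac{b}{2}\gamma_n)$, as well as the non-negativity of $\delta_n$, all hold. A direct Taylor expansion of~\eqref{eq:alpha} in $\gamma_{n+1}$ yields
\[
\delta_{n+1}=1+(a-\mu-b)\gamma_{n+1}+O(\gamma_{n+1}^{2}),\qquad \beta_{n+1}=-\gamma_{n+1}+O(\gamma_{n+1}^{2}),
\]
together with $\alpha_{n+1}-a\delta_{n+1}+2\beta_{n+1}L=(a\mu+ab-a^{2}-2L)\gamma_{n+1}+O(\gamma_{n+1}^{2})$ and $\epsilon_{n+1}-\tfrac{b}{2}\delta_{n+1}=(C_L-\tfrac{b}{2}(\mu+a-b))\gamma_{n+1}+O(\gamma_{n+1}^{2})$. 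The constants in the remainders depend only on $a,b,\mu,L,C_L,\sigma$ (not on $n$), so by~\eqref{eq:238877} a single small enough choice of $\bar\gamma\le b/C_L$ enforces all conditions simultaneously.

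With the hypotheses of Proposition~\ref{prop:SHB} verified, the next step is to convert the products in~\eqref{eq:238477} into exponential bounds. For any $\eta>0$ and $\bar\gamma$ small, I would use the elementary inequalities $(1+a\gamma_{i})^{-1}\le \exp(-(a-\eta)\gamma_{i})$ and $\delta_{i}\le \exp(-(\mu+b-a-\eta)\gamma_{i})$. Substituting into~\eqref{eq:238477}, the first term decays like $\exp(-(a-\eta)t_n)$, and the sum is bounded by
\[
C\,\exp(-(a-\eta)t_n)\sum_{i=1}^{n}\gamma_{i}\exp((2a-\mu-b)\,t_{i}).
\]
Splitting into the two cases $2a\le \mu+b$ and $2a\ge \mu+b$, a Riemann-sum estimate identifies this quantity as $\le C\exp(-(m-\eps)\,t_n)$ for $\eta$ sufficiently small, with $m=\min(a,\mu-a+b)$, which gives the $f(X_n)$-part of (i). For the $|V_n|^2$-part, I start from $\tfrac{b}{2}|V_n|^{2}=E_n-a f(X_n)-\langle \grad f(X_n),V_n\rangle$, apply Young's inequality and the inverse PL-inequality of Lemma~\ref{rem:Lipschitz} to obtain $\tfrac{b}{4}|V_n|^{2}\le E_n+\tfrac{|2C_L-ab|}{b}\,f(X_n)$, and combine this with the bound $\E[\1_{\IA_{n-1}}E_n]\le (\prod_{i\le n}\delta_i)\,\E[\1_{\IA_0}E_0]$ already produced in~\eqref{eq:2776362562} (valid via Lemma~\ref{lem:Epositive} once $ab\ge C_L$ is imposed in the localized case).

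For (ii), the hypothesis $\sum_i \exp((m'-(m-\eps))t_i)<\infty$ combined with (i) gives $\E\bigl[\sum_n \1_{\IA_\infty}\exp(m't_n)f(X_n)\bigr]<\infty$ by monotone convergence, so $\exp(m't_n)f(X_n)\to 0$ almost surely on $\IA_\infty$. For (iii), Jensen's inequality and the $|V_n|^{2}$-bound yield $\E[\1_{\IA_\infty}|V_n|]\le C\exp(-(m-\eps)t_n/2)$, and the Riemann-sum argument from the previous paragraph shows that for $\bar\gamma$ small $\sum_n \gamma_n\exp(-(m-\eps)t_n/2)$ is finite. Hence $\sum_n\gamma_{n+1}|V_{n+1}|<\infty$ almost surely on $\IA_\infty$, and telescoping $X_{n+1}-X_n=\gamma_{n+1}V_{n+1}$ shows $(X_n)$ is Cauchy and converges.

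The main obstacle I expect is the uniform-in-$n$ control of the $O(\gamma^{2})$ remainders: one must verify that a single threshold $\bar\gamma$ simultaneously enforces all four sign conditions of Proposition~\ref{prop:SHB} and also keeps the perturbations of the two exponential rates $a$ and $\mu+b-a$ below the prescribed loss $\eps$, while the Riemann-sum comparison $\sum_i\gamma_i e^{c\,t_i}\asymp \int_0^{t_n}e^{cs}\,ds$ must hold with a constant independent of the specific step-size sequence. This is essentially bookkeeping, but it is where the smallness requirement on $\bar\gamma$ in the statement is pinned down.
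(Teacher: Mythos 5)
Your proposal is correct and follows essentially the same route as the paper's proof: verify the sign hypotheses of Proposition~\ref{prop:SHB} by expanding \eqref{eq:alpha} to first order in $\gamma$ (your linearizations all match the paper's), convert the products $\prod(1+a\gamma_i)^{-1}$ and $\prod\delta_i$ to exponentials at a sacrificed rate $\eta$, do a Riemann-sum comparison in the two cases $2a\lessgtr\mu+b$ (the paper also treats the boundary case $2a=\mu+b$ separately, absorbing the polynomial factor into a further $\eps$-loss, which you should keep in mind when fixing your three rate-tolerances), bound $|V_n|^2$ via the Lyapunov identity plus Young plus the inverse PL-inequality, and close (iii) with summability of $\gamma_n\E[\1|V_n|]$. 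Your argument for (ii) is phrased a bit differently — you apply Tonelli to get $\E\bigl[\sum_n\1_{\IA_\infty}\exp(m't_n)f(X_n)\bigr]<\infty$ and conclude a.s.\ convergence of the terms, whereas the paper runs a Markov/union bound on the tail sets $\IB_n=\IA_\infty\cap\{\sup_{i\ge n}\exp(m't_i)f(X_i)\ge\eps'\}$ — but these are the same estimate seen from opposite sides of Borel--Cantelli, and yours is arguably tidier.
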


\begin{proof}
	(i): 
	First, note that, for all $x\in \R$, $1+x\le \exp(x)$ and $(1+x)^{-1}=e^{-x+o(x)}$. Thus, for every $\eps'>0$ there exists a $\bar \gamma'>0$ such that if $\max_{i=1, \dots, n} \gamma_i \le \bar \gamma'$ we have
	$$
	\prod_{i=1}^{n} (1+a\gamma_{i})^{-1} \le \Bigl(\prod_{i=1}^{n} \exp((-a+\eps')\gamma_i)\Bigr) = \exp((-a+\eps')t_n ).
	$$
	Moreover, for $(\delta_n)_{n \in \N}$ given in \eqref{eq:alpha} we have $\delta_n = 1-\gamma_n(\mu-a+b)+o(\gamma_n)$ so that, for all $\eps''>0$, there exists a $\bar \gamma ''>0$ such that if $\max_{i=1, \dots, n} \gamma_i \le \bar \gamma''$ we have $\delta_i\ge 0$ for all $i=1, \dots, n$ and
	$$
	\prod_{i=1}^{n}\delta_{i} \le \exp(-(\mu-a+b-\eps'')t_n).
	$$
	Note that $\beta_n = -\gamma_n + o(\gamma_n)$, $\alpha_n-a\delta_n+2\beta_n L =  \gamma_n (a\mu-a^2+ab-2 L)+o(\gamma_n)$, 
	$$
	\epsilon_n-\frac b2 \delta_n =  \gamma_n \Bigl(C_L-\frac{b}{2}(\mu+a-b)\Bigr) +o(\gamma_n) \;  \text{ and } \; \sfrac {C_L}{2}\gamma^2-\sfrac{b}{2}\gamma=-\sfrac{b}{2}\gamma+o(\gamma),
	$$
	and using assumption \eqref{eq:238877} we can choose $\bar \gamma \le \min(\bar \gamma',\bar \gamma'')$ sufficiently small such that if $\sup_{n \in \N}\gamma_n \le \bar \gamma$ all of the above terms are strictly negative. Then, using Proposition~\ref{prop:SHB} we get that for all $n \in \N$
	\begin{align*}
		\E[\1_{\IA_{n-1}} f(X_{n})] 
		\le \exp((&-a+\eps'+\eps'')t_n )  \\
		& \Bigl( \E[ \1_{\IA_0}f(X_0)]+ \sum_{i=1}^{n}  \gamma_{i}  \exp(-(\mu-2a+b)t_i) \E[\1_{\IA_0}E_0] \Bigr).
	\end{align*}
	If $a< \mu-a+b$, the function $t \mapsto \exp(-(\mu-2a+b)t)$ is monotonously decreasing and we get
	\begin{align*}
		\sum_{i=1}^{n}  \gamma_{i}  \exp(-(\mu-2a+b)t_i) &\le \int_{0}^{t_n} \exp(-(\mu-2a+b)t) \, dt.
	\end{align*}
	Thus,
	$$
	\E[\1_{\IA_{n-1}} f(X_{n})] \le\exp((-a+\eps'+\eps'')t_n ) \Bigl( \E[\1_{\IA_0} f(X_0)] + \frac{\E[\1_{\IA_0}E_0]}{\mu-2a+b}  \Bigr).
	$$
	For $a>\mu-a+b$,  the function $t \mapsto \exp(-(\mu-2a+b)t)$ is monotonously increasing and we get
	\begin{align*}
		\sum_{i=1}^{n}  &\gamma_{i}  \exp(-(\mu-2a+b)t_i) \le \int_{0}^{t_n} \exp(-(\mu-2a+b)(t+\bar \gamma)) \, dt.
	\end{align*}
	Thus,
	\begin{align*}
		\E[\1_{\IA_{n-1}} f(X_{n})] \le&\exp((-(\mu-a+b)+\eps'+\eps'')t_n ) \Bigl( \E[\1_{\IA_0} f(X_0)] \\
		& \qquad \qquad \qquad  +\frac{\E[\1_{\IA_0}E_0]}{2a-\mu-b} \bigl(\exp(-(\mu-2a+b)\bar \gamma)  \Bigr).
	\end{align*}
	Lastly, for $a=\mu-a+b$ we get 
	$
	\sum_{i=1}^{n}  \gamma_{i}  \exp(-(\mu-2a+b)t_i)=t_n
	$
	and, thus,
	$$
	\E[\1_{\IA_{n-1}}f(X_n)] \le \exp((-a+\eps'+\eps'')t_n ) \Bigl( \E[ \1_{\IA_0}f(X_0)] + t_n\E[\1_{\IA_0}E_0] \Bigr).
	$$
	Note that $\exp(-\eps''' t)t \to 0$ for all $\eps'''>0$. Therefore, there exists a constant $C'>0$ such that
	$$
	\E[\1_{\IA_{n-1}}f(X_{n})] \le \exp((-a+\eps'+\eps''+\eps''')t_n ) \Bigl( \E[\1_{\IA_0} f(X_0)] + C'\E[\1_{\IA_0}E_0] \Bigr).
	$$
	The proof of the first assertion follows by choosing $\eps'=\eps''= \eps''' =\frac 13 \eps$.
	
	For the second assertion, note that, by Lemma~\ref{rem:Lipschitz}, we have $|\grad f(x)|^2 \le 2 C_L f(x)$ for all $x \in \R^d$
	and, using that $f(X_n)\ge 0$, we get by the Cauchy-Schwarz inequality
	\begin{align*}
		\frac b2 |V_n|^2 &\le E_n - \langle \grad f(X_n), V_n \rangle \le E_n + |\grad f(X_n)| \, |V_n|,
	\end{align*}
	where $E_n$ is defined by \eqref{eq:Lyapunov}.
	Thus, using Young's inequality, 
	\begin{align} \label{eq:Yconv2}
		\frac b2 \E\bigl[\1_{\IA_{n-1}}  |V_n|^2\bigr]
		&\le \E[\1_{\IA_{n-1}} E_n] + \frac 1b \E[\1_{\IA_{n-1}} |\grad f(X_n)|^2] + \frac b4\E[ \1_{\IA_{n-1}} |V_n|^2] 
	\end{align}
	and, with the bound for $\E[\1_{\IA_{n-1}} f(X_n)]$ and \eqref{eq:2776362562}, we get a constant $C\ge 0$ such that 
	\begin{align} \label{eq:Yconv} 
		\E[\1_{\IA_{n-1}}|V_n|^2]^{1/2} \le C \exp\Bigl(-\frac 12 (m -\eps)t_n\Bigr).
	\end{align}
	
	(ii): Let $m'<m-\eps$. For $\eps' >0$ and $n \in \N$ consider the set $\IB_n = \IA_\infty \cap \{\sup_{i \ge n} \exp(m' t_i) f(X_i) \ge \eps' \}$.
	With the Markov inequality and (i) there exists a $C>0$ such that
	\begin{align*}
		\P(\IB_n) &\le \sum_{i=n}^\infty \P\bigl(\IA_{i-1} \cap \bigl\{\exp(m' t_i) f(X_i) \ge \frac{\eps'}{2}\bigr\}\bigr) \\
		& \le \sum_{i=n}^\infty \frac{2\exp(m' t_i)}{\eps'} \E[\1_{\IA_{i-1}} f(X_i)] \le \frac{C}{\eps'} \sum_{i=n}^\infty \exp((m'-(m-\eps)) t_i) \overset{n \to \infty}{\longrightarrow} 0.
	\end{align*}
	With
	$$
	\P\Bigl(\IA_\infty \cap \big\{\limsup_{n \to \infty} \exp(m' t_n)f(X_n) \ge \eps'\big\}\Bigr) \le \P\Bigl(\bigcap_{n \in \N} B_n\Bigr) =0
	$$
	we get $\exp(m' t_n)f(X_n)\to 0$ almost surely on $\IA_\infty$.
	
	(iii):
	We consider the event $\IA_\infty$ and bound the distance that the process $(X_n)_{n \in \N_0}$ travels. Since $\eps < m$ the mapping $t  \mapsto \exp(-\frac 12 (m -\eps)t)$ is monotonously decreasing. Thus, using \eqref{eq:Yconv} we get
	\begin{align*}
		\E\Bigl[\1_{\IA_\infty} \sum_{i=1}^\infty |X_i-X_{i-1}|\Bigr] &\le   \sum_{i=1}^\infty \gamma_i \E[\1_{\IA_{i-1}}  |V_i|^2]^{1/2} \le C \sum_{i=1}^\infty \gamma_i  \exp\Bigl(-\frac 12 (m -\eps)t_i\Bigr)\\
		&\le C \int_0^\infty \exp\Bigl(-\frac 12 (m -\eps)t\Bigr) \, dt < \infty,
	\end{align*}
	which implies that $\sum_{i=1}^\infty |X_i-X_{i-1}|$ is almost surely finite, on $\IA_\infty$. Thus, $(X_n)_{n \in \N_0}$ almost surely converges on $\IA_{\infty}$.
\end{proof}

\begin{lemma} \label{lem:feasible1}
	For all $\mu >0$ there exist $a,b >0$ such that $ab \ge C_L$ and \eqref{eq:238877} is satisfied.
\end{lemma}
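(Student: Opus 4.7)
The plan is to exhibit explicit $a,b>0$ satisfying all four conditions simultaneously, namely $ab\ge C_L$ together with the three inequalities in \eqref{eq:238877}. Rewriting \eqref{eq:238877} in the more transparent equivalent form
\begin{align*}
\mu+b-a>0,\quad a(\mu+b-a)<2L,\quad b(\mu+a-b)>2C_L,
\end{align*}
the key observation is that the second and third inequalities pull in opposite directions: I want to place $a$ close to the upper threshold $\mu+b$ (making the left-hand side of the second inequality small) while keeping $b$ well below the symmetric threshold $\mu+a$ (keeping the left-hand side of the third inequality large). A clean way to achieve both simultaneously is the ansatz $a=\mu+b-\varepsilon$ for some small parameter $\varepsilon>0$ to be determined.

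Under this ansatz the first inequality collapses to $\varepsilon>0$, which is automatic; the second becomes $a\varepsilon<2L$, which can be satisfied by choosing $\varepsilon$ small once $a$ (hence $b$) is fixed; and the third becomes $b(2\mu-\varepsilon)>2C_L$, which for small $\varepsilon$ is essentially $b>C_L/\mu$. I would therefore take $b:=2C_L/\mu$, so that
\begin{align*}
b(2\mu-\varepsilon)=\frac{2C_L}{\mu}(2\mu-\varepsilon)=4C_L-\frac{2C_L\varepsilon}{\mu}>2C_L
\end{align*}
as long as $\varepsilon<\mu$. The same choice gives $ab=(\mu+2C_L/\mu-\varepsilon)(2C_L/\mu)$, which exceeds $C_L$ as soon as $\mu+2C_L/\mu-\varepsilon\ge\mu/2$, i.e.\ whenever $\varepsilon<\mu/2+2C_L/\mu$. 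Finally, the constraint $a\varepsilon<2L$ holds whenever $\varepsilon<2L/(\mu+2C_L/\mu)$, which is a strictly positive quantity since $L>0$. Choosing $\varepsilon$ strictly smaller than the minimum of these three positive thresholds then yields a valid pair $(a,b)$ with $a,b>0$ and all required inequalities (including strict $ab>C_L$) satisfied.

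There is no substantive obstacle: the lemma is a pure existence statement, and once the right ansatz is identified each of the four constraints reduces to an inequality of the form $\varepsilon<c$ with $c>0$. The only mild point to verify is the positivity of each threshold, but this follows immediately from $\mu>0$, $L>0$, and $C_L\ge L>0$ (the last observation being the remark following Lemma~\ref{rem:Lipschitz}). Hence the construction is uniformly valid for every $\mu>0$.
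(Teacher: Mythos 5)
Your construction is correct: under the substitution $a=\mu+b-\varepsilon$ the first inequality becomes $\varepsilon>0$, the second $a\varepsilon<2L$, the third $b(2\mu-\varepsilon)>2C_L$, and with $b=2C_L/\mu$ each of these (together with $ab\ge C_L$ and $a>0$) holds once $\varepsilon$ is chosen smaller than $\min\bigl(\mu,\ \mu/2+2C_L/\mu,\ 2L/(\mu+2C_L/\mu)\bigr)$, all strictly positive because $\mu>0$ and $C_L\ge L>0$.

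The route is genuinely different from the paper's. The paper also introduces a slack parameter $\eps$ but with the reversed ansatz $a=b+2C_L/b-\mu+\eps$, which turns the \emph{third} inequality into the identity $C_L-\tfrac b2(\mu+a-b)=-\tfrac{b\eps}{2}$, leaving the middle inequality as a quadratic in $\mu$; it then computes the roots $\mu_\pm^{\eps,b}$, shows the admissible window of friction parameters contains $\bigl(\tfrac{C_L}{b}+\tfrac\eps2,\ \mu_-^{\eps,b}\wedge(b+\tfrac{C_L}{b}+\eps)\bigr)$, and lets $b$ range and $\eps\to0$ to sweep out all $\mu>0$. You instead normalize the \emph{first} inequality to be exactly $\varepsilon$, fix $b=2C_L/\mu$ outright, and get a direct one-line verification of each remaining constraint. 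Your argument is shorter and cleaner for the bare existence statement; the paper's more elaborate parametrization pays off because the explicit roots $\mu_\pm^{\eps,b}$ are reused immediately afterwards in Lemma~\ref{lem:49378} and Theorem~\ref{theo2} to extract the optimal convergence rate, which your fixed choice $b=2C_L/\mu$ would not give.
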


\begin{proof}
	Let $\eps,b>0$ and choose $a=b+2\frac{C_L}{b}-\mu+\eps$. Note that $a>0$ iff $\mu < b+2\frac {C_L}{b}+\eps$. Moreover, $\mu-a+b=2(\mu-\frac{C_L}{b})-\eps$ is positive iff $\mu > \frac{C_L}{b}+\frac{\eps}{2}$. Now, 
	$
	C_L-\frac b2(\mu+a-b)=-\frac{b}{2}\eps < 0
	$
	and we have 
	$$
	a\mu-a^2+ab-2L = -2\mu^2+2\mu\Bigl(b+\frac{3C_L}{b}+\frac{3\eps}{2}\Bigr)-2C_L-2L-4\frac{C_L^2}{b^2}-2\eps\Bigr(\frac b2 + 2\frac{C_L}{b}+\frac{\eps}{2}\Bigl).
	$$
	The latter term is a quadratic function in $\mu$ that is negative outside of the two roots. Therefore, $a\mu-a^2+ab-2L <0$ iff $\mu \notin [\mu_-^{\eps,b},\mu_+^{\eps,b}]$, where
	\begin{align} \label{eq:mu}
		\mu_{\pm}^{\eps,b} = \frac 12 \Bigl( b+\frac{3C_L}{b}+\frac{3\eps}{2} \pm \sqrt{\Bigl(b+\frac{C_L}{b}+\frac{\eps}{2}\Bigr)^2-4L} \,  \Bigr).
	\end{align}
	Note that $(b+\frac{C_L}{b})^2\ge 4C_L$, for all $b >0$, and $C_L\ge L$ so that $\mu_{\pm}^{\eps,b}$ is well-defined.
	Moreover, $\mu_-^{\eps,b}>\frac{C_L}{b}+\frac{\eps}{2}$ and $\mu_+^{\eps,b}<b+2\frac{C_L}{b}+\eps$. The additional assumption $ab\ge C_L$ is satisfied iff $\mu\le b+ \frac{C_L}{b}+\eps$. Therefore, the set of friction parameters $\mu$ that satisfy \eqref{eq:238877} for the given pair $(a,b)$ is equal to
	$
	(\frac{C_L}{b}+\frac{\eps}{2},\mu_-^{\eps,b}) \cup (\mu_+^{\eps,b}, b+2\frac{C_L}{b}+\eps)
	$
	and the set of friction parameters $\mu$ that satisfy both \eqref{eq:238877} and $ab \ge C_L$ for the given pair $(a,b)$ contains the interval
	$
	(\sfrac{C_L}{b}+\sfrac{\eps}{2}, \mu_-^{\eps,b} \wedge (b+\sfrac{C_L}{b}+\eps)).
	$
	For all $b>0$, the latter interval is non-empty, the upper and lower limits are continuous in $b$ and the lower limit satisfies 
	$$
	\frac{C_L}{b}+\frac{\eps}{2} \overset{b \to \infty}{\longrightarrow} \frac{\eps}{2} \quad \text{ and } \quad \frac{C_L}{b}+\frac{\eps}{2} \overset{b \to 0}{\longrightarrow} \infty.
	$$
	By letting $\eps \to 0$ we showed that for every $\mu>0$ there exists a pair $(a,b)$ such that \eqref{eq:238877} is satisfied and $ab \ge C_L$.
\end{proof}

\begin{proof} [Proof of Theorem~\ref{theo1}]
	By Lemma~\ref{lem:feasible1}, there exist parameters $a,b>0$ such that $ab\ge C_L$ and \eqref{eq:238877} is satisfied. Note that the choice $(\IA_n)_{n \in \N} = (\{X_i \in \mathcal D \text{ for all } i =0, \dots, n\})_{n \in \N}$ satisfies the assumptions of Proposition~\ref{prop:convrate}.
	Now, statements (i), (ii) and (iii) follow from Proposition~\ref{prop:convrate}.
\end{proof}

We give a general statement on the size of the convergence rate that still depends on the technical parameter $b$.

\begin{lemma} \label{lem:49378}
	Let $L, b, \eps > 0$ and $\sigma \ge 0$. Let $(\IA_n)_{n \in \N_0}$ be a decreasing sequence of events such that, for all $n \in \N_0$, $\IA_n \in \cF_n$ and on $\IA_n$ it holds that
	\begin{align*} 
		|\grad f(X_n)|^2 \ge 2L f(X_n) \quad \text{and} \quad \E[|D_{n+1}|^2 | \cF_n] \le \sigma f(X_n).
	\end{align*} 
	Let 
	$
	\mu \in (\frac{C_L}{b}+\frac{\eps}{2}, \mu_-^{\eps,b}) \cup (\mu_+^{\eps,b}, b+2\frac{C_L}{b}+\eps),
	$
	where $\mu_{\pm}^{\eps,b}$ is defined by \eqref{eq:mu}.
	If $\P(\bigcap_{n \in \N_0}\IA_n)<\P(\IA_0)$ additionally assume that $\mu \le b+\frac{C_L}{b}+\eps$. Then, there exist $C, \bar \gamma>0$ such that if $\sup_{n \in \N}\gamma_n \le \bar \gamma$ we have
	$$
	\E[\1_{\IA_{n-1}} f(X_n)] \le C \exp(-m(\eps,b,\mu) t_n),
	$$
	where
	\begin{align*} 
		m(\eps,b,\mu) = \begin{cases} 
			2(\mu-\frac{C_L}{b}-\eps), & \text{ if } \mu < \frac{1}{3} (b+4\frac{C_L}{b}+2\eps) \\
			b+2\frac{C_L}{b}-\mu, & \text{ if } \mu \ge \frac{1}{3} (b+4\frac{C_L}{b}+2\eps).
		\end{cases}
	\end{align*}
\end{lemma}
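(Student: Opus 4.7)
The plan is to reduce the statement directly to Proposition~\ref{prop:convrate} by instantiating the free parameter $a$ exactly as in the proof of Lemma~\ref{lem:feasible1}. Concretely, I would set
\[
a := b + \frac{2C_L}{b} - \mu + \eps,
\]
so that $\mu - a + b = 2(\mu - C_L/b) - \eps$, and then apply Proposition~\ref{prop:convrate} with this pair $(a,b)$. The rate given by that proposition is $m_0 - \eps'$ for any $\eps' \in (0, m_0)$, where $m_0 := \min(a,\mu-a+b)$; choosing $\eps' = \eps$ at the end will produce exactly the formula for $m(\eps,b,\mu)$.

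Next, I would verify the three strict inequalities in \eqref{eq:238877} together with the optional $ab \ge C_L$. All of these computations have already been carried out inside Lemma~\ref{lem:feasible1}: the assumption $\mu > C_L/b + \eps/2$ yields $\mu - a + b > 0$; the assumption $\mu \in (C_L/b + \eps/2, \mu_-^{\eps,b}) \cup (\mu_+^{\eps,b}, b + 2C_L/b + \eps)$ yields $a\mu - a^2 + ab - 2L < 0$ via the explicit roots $\mu_{\pm}^{\eps,b}$ in \eqref{eq:mu}; a direct substitution gives $C_L - \tfrac{b}{2}(\mu + a - b) = -\tfrac{b\eps}{2} < 0$; and the extra hypothesis $\mu \le b + C_L/b + \eps$ in the case $\P(\bigcap_n \IA_n) < \P(\IA_0)$ is precisely equivalent to $ab \ge C_L$. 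Hence all hypotheses of Proposition~\ref{prop:convrate} are met.

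It remains to identify $m_0 - \eps$ with the piecewise formula for $m(\eps,b,\mu)$. Solving $a = \mu - a + b$ for the chosen $a$ gives the threshold $\mu = \tfrac13(b + 4C_L/b + 2\eps)$, and a simple case split yields $m_0 = 2\mu - 2C_L/b - \eps$ below the threshold and $m_0 = b + 2C_L/b - \mu + \eps$ above it; subtracting one more $\eps$ reproduces $2(\mu - C_L/b - \eps)$ and $b + 2C_L/b - \mu$ respectively. The only subtlety is the narrow sub-regime where $m_0 \le \eps$, so that the choice $\eps' = \eps$ is not admissible in Proposition~\ref{prop:convrate}; there, however, $m(\eps,b,\mu) \le 0$ and the asserted inequality is trivial, since the Lyapunov estimate \eqref{eq:2776362562} together with the one-step bound \eqref{eq:21378861} (both valid for $\bar\gamma$ small enough regardless of the sign of $m_0 - \eps$) furnishes a uniform bound on $\E[\1_{\IA_{n-1}} f(X_n)]$. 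I expect this bookkeeping — tracking which term realizes the minimum and correctly absorbing the $\eps$-loss from Proposition~\ref{prop:convrate} — to be the only mildly fiddly step; the analytic content is already contained in Proposition~\ref{prop:convrate} and Lemma~\ref{lem:feasible1}.
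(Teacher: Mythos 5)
Your proof is correct and follows essentially the same route as the paper: set $a = b + 2C_L/b - \mu + \eps$, observe that the proof of Lemma~\ref{lem:feasible1} already verifies all hypotheses \eqref{eq:238877} (and $ab \ge C_L$ when needed), apply Proposition~\ref{prop:convrate}, and identify the resulting $\min(a,\mu-a+b)-\eps$ with the piecewise formula. The one place you go beyond the paper is the explicit handling of the regime $m_0 := \min(a,\mu-a+b) \le \eps$ (which can genuinely occur near the endpoints of the allowed $\mu$-intervals), where the paper's shorthand ``set $\eps'=\eps$'' is not literally admissible; your observation that the bound is then vacuous (or, more simply, that one can take any $\eps' \in (0,m_0)$ in Proposition~\ref{prop:convrate} and note $m_0-\eps' > m_0-\eps$, so the weaker claimed bound follows a fortiori) correctly closes that small gap.
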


\begin{proof}
	Let $\eps, b>0$ and $\mu$ as in the assumptions of the lemma and set $a=b+2\frac{C_L}{b}-\mu+\eps$. Recall that in the proof of Lemma~\ref{lem:feasible1} we showed that this choice of parameters satisfies \eqref{eq:238877} and if $\P(\bigcap_{n \in \N_0}\IA_n)<\P(\IA_0)$, additionally, $ab \ge C_L$. Hence, we can apply Proposition~\ref{prop:convrate} and deduce that there exist constants $C, \bar \gamma >0$ such that, if $\sup_{n \in \N}\gamma_n \le \bar \gamma$, we have
	$$
	\E[\1_{\IA_{n-1}}f(X_n)] \le C \exp(-m(\eps,b,r) t_n),
	$$
	where
	$$
	m(\eps,b,\mu)= \min(a,\mu-a+b)-\eps= \min\Bigl(b+2\frac{C_L}{b}-\mu,2\Bigl(\mu-\frac{C_L}{b}-\eps\Bigr)\Bigr).
	$$
	The evaluation of the minimum is straight-forward.
\end{proof}

\begin{proof} [Proof of Theorem~\ref{theo2}]
	We maximize the convergence rate derived in Lemma~\ref{lem:49378} over all admissible parameters $\mu$ and $b$.
	First, assume that $\kappa=\frac{C_L}{L}<\frac 98$. Then, we have for all sufficiently small $\eps>0$ that $\kappa^\eps := \bigl(\frac{\sqrt {C_L}}{\sqrt{L}}+\frac{\eps}{4\sqrt{L}}\bigr)^2<\frac{9}{8}$ so that Set \begin{align*}
		b_{\pm}^\eps := \frac{3}{\sqrt 8} \sqrt L -\frac{\eps}{4} \pm \sqrt{\Bigl( \frac{3}{\sqrt 8} \sqrt L -\frac{\eps}{4} \Bigr)^2-C_L}
	\end{align*}
	is well-defined.
	For $b \in (b_-^\eps, b_+^\eps)$ we have $\frac{1}{3} (b+4\frac{C_L}{b}+2\eps)<\mu_-^{\eps, b}$, such that with Lemma~\ref{lem:49378} we get for $\mu \in (\frac{C_L}{b}+\frac{\eps}{2},\frac 13(b+4\frac{C_L}{b}+2\eps))$ the convergence rate $m(\eps, b ,\mu)=2\bigl( \mu-\frac{C_L}{b} -\eps\bigr)$. Note that
	$$
	\frac{d}{d\eps}\big|_{\eps=0} \frac 13 \Bigl(b_+^\eps-\eps^2+4\frac{C_L}{b_+^\eps-\eps^2}+2\eps\Bigr) =\frac 14 + \frac{3 \sqrt L}{4 \sqrt{9L-8C_L}}>0.
	$$ 
	Therefore, for sufficiently small $\eps$, we have 
	$$
	\mu^*_1 := \frac{1}{3} \Bigl(b_+^0+4\frac{C_L}{b_+^0}\Bigr) = \frac{1}{\sqrt 8}(5-\sqrt{ 9 - 8\kappa})\sqrt{L}<\frac 13 \Bigl(b_+^\eps-\eps^2+4\frac{C_L}{b_+^\eps-\eps^2}+2\eps\Bigr)
	$$
	and we
	get by continuity that
	\begin{align*}
		m(\eps, b_+^\eps-\eps^2,\mu^*_1) &\overset{\eps \to 0}{\longrightarrow} \frac 23 \Bigl(b_+^0+\frac{C_L}{b_+^0}\Bigr) = \frac 23 \frac{(b_+^0)^2+C_L}{b_+^0} = \sqrt {2L}.
	\end{align*}
	Moreover, note that $b_+^0$ satisfies $\frac 13 (b_+^0+4\frac{C_L}{b_+^0})<b_+^0+\frac{C_L}{b_+^0}$ since 
	$$
	\frac{C_L}{b_+^0} = \frac {1}{\sqrt 8} (3 \sqrt L -\sqrt{9L-8C_L})<\frac{1}{\sqrt 2}(3\sqrt L +\sqrt{9L-8 C_L})=2b_+^0.
	$$
	Thus, $b_+^0 (b_+^0+2\frac{C_L}{b_+^0}-\mu^*_1) > C_L$ and, for sufficiently small $\eps>0$, the parameters $b= b_+^\eps-\eps^2$ and $a=b_+^\eps-\eps^2+2\frac{C_L}{b_+^\eps-\eps^2}-\mu^*_1+\eps$ satisfy $ab\ge C_L$.
	
	Analogously, we get the convergence rate $m(\eps,b,\mu)=b+2\frac{C_L}{b}-\mu$ if $b \in (b_-^\eps,b_+^\eps)$ and $\mu \in (\frac 13(b+4\frac{C_L}{b}+2\eps),\mu_-^{\eps,b})$. Note that, since $\kappa < \frac 98$ we have
	$$
	\frac{d}{d\eps}\big|_{\eps=0} \frac 13 \Bigl(b_-^\eps+\eps^2+4\frac{C_L}{b_-^\eps+\eps^2}+2\eps\Bigr) = \frac 14 - \frac{3 \sqrt{L}}{4\sqrt{9L-8C_L}}<0.
	$$
	Therefore, for sufficiently small $\eps$, we have
	$$
	\mu^*_2 := \frac{1}{3} \Bigl(b_-^0+4\frac{C_L}{b_-^0}\Bigr) = \frac{1}{\sqrt 8}\bigl(5+\sqrt{ 9 - 8\kappa}\bigr)\sqrt{L} > \frac 13\Bigl(b_-^\eps+\eps^2+4\frac{C_L}{b_-^\eps+\eps^2}+2\eps\Bigr)
	$$
	and we get by continuity that
	\begin{align*}
		m(\eps, b_-^\eps+\eps^2,\mu^*_2) &\overset{\eps \to 0}{\longrightarrow} b_-^0+2\frac{C_L}{b_-^0}-\mu^* = \frac 23 \frac{(b_-^0)^2+C_L}{b_-^0}  = \sqrt {2L}.
	\end{align*}
	If $L<C_L$, $b_-^0$ satisfies $\frac 13 (b_-^0+4\frac{C_L}{b_-^0})<b_-^0+\frac{C_L}{b_-^0}$ since 
	$$
	\frac{C_L}{b_-^0} = \frac {1}{\sqrt 8} (3 \sqrt L +\sqrt{9L-8C_L})<\frac{1}{\sqrt 2}(3\sqrt L -\sqrt{9L-8 C_L})=2b_-^0.
	$$
	Thus, for sufficiently small $\eps>0$, $b=b_-^\eps+\eps^2$ and $a=b_-^\eps+\eps^2+2\frac{C_L}{b_-^\eps+\eps^2}-\mu^*_2+\eps$ satisfy $ab\ge C_L$.
	If $L=C_L$, we have $\mu^*_2=b_-^0+\frac{C_L}{b_-^0}$. Using
	$
	\frac{d}{d\eps}\big|_{\eps=0} \Bigl( b_-^\eps+\frac{C_L}{b_-^\eps}+\eps\Bigr) = \frac 12,
	$
	we get $\mu^*_2 \le b_-^\eps +\eps^2 + \frac{C_L}{b_-^\eps+\eps^2}+\eps$ for sufficiently small $\eps>0$ and, thus, $ab\ge C_L$ for the parameters $b=b_-^\eps+\eps^2$ and $a=b_-^\eps+\eps^2+2\frac{C_L}{b_-^\eps+\eps^2}-\mu^*_2+\eps$.
	Lastly, note that without loss of generality we can increase the Lipschitz constant $C_L$ as long as $\kappa<\frac 98$. We thus get that all friction parameters
	$$
	\mu \in \Bigl[\frac{1}{\sqrt 8}\bigl(5-\sqrt{ 9 - 8\kappa}\bigr)\sqrt{L}, \frac{1}{\sqrt 8}\bigl(5+\sqrt{ 9 - 8\kappa}\bigr)\sqrt{L}  \Bigr] \Big\backslash\Bigl\{\frac{5}{\sqrt 8}\sqrt{L}\Bigr\}
	$$
	give an optimal convergence rate of $\sqrt{2L}-\eps$. The case $\mu=\frac{5}{\sqrt 8}\sqrt{L}$ corresponds to $\kappa=\frac 98$ and will be treated below.
	
	Next, assume that $\kappa \ge \frac 98$ which implies that, for all $\eps > 0$, we have $\kappa^\eps \ge \frac{9}{8}$. 
	Using Lemma~\ref{lem:49378}, we get the convergence rate $m(\eps,b,\mu)=2(\mu-\frac{C_L}{b}-\eps)$ if $\mu \in (\frac{C_L}{b}+\frac{\eps}{2}, \mu_-^{\eps,b})$.	
	First, note that if $\mu_-^{0,b} \in (\frac{C_L}{b}+\frac{\eps}{2}, \mu_-^{\eps,b})$ we have
	$$
	m(\eps, b, \mu_-^{0,b})=  b+\frac{C_L}{b} - \sqrt{\Bigl(b+\frac{C_L}{b}\Bigr)^2-4L}-2\eps.
	$$
	Moreover, $m(\eps, b,\mu_-^{0,b}) \overset{b \to 0}{\longrightarrow}-2\eps$ and $m(\eps, b,\mu_-^0) \overset{b \to \infty}{\longrightarrow}-2\eps$ as well as
	$$
	\frac{d}{db} m(\eps,b,\mu_-^{0,b}) = \Bigl( 1-\frac{C_L}{b^2} \Bigr) \Bigl( 1-\frac{b+C_L/b}{\sqrt{(b+C_L/b)^2-4L}} \Bigr),
	$$
	so that	
	$\frac{d}{db}m(\eps,b,\mu_-^{0,b})>0$ for all $b< \sqrt{C_L}$ and $\frac{d}{db}m(\eps,b,\mu_-^{0,b})<0$ for all $b> \sqrt{C_L}$. Therefore, the maximal value for $m(\eps,b,\mu_-^{0,b})$ is attained at $b^*=\sqrt{C_L}$, where 
	$$
	\mu^*_3= \mu_-^{0,b^*} = 2\sqrt{C_L}-\sqrt{C_L-L} = (2\sqrt \kappa-\sqrt{\kappa-1})\sqrt L
	$$
	and 
	$$
	m(\eps, b^*,\mu^*_3)=2(\sqrt{C_L}-\sqrt{C_L-L}-\eps) \overset{\eps \to 0}{\longrightarrow} 2(\sqrt{\kappa}-\sqrt{\kappa-1})\sqrt L.
	$$
	Now, consider 
	\begin{align*}
		\mu_{-}^{\eps,b^*} &= \frac 12 \Bigl( 4 \sqrt{C_L}+\frac{3\eps}{2} - \sqrt{\bigl(2 \sqrt{C_L}+\frac{\eps}{2}\bigr)^2-4L} \,  \Bigr)
	\end{align*}
	as a function of $\eps$. Note that for all $\eps\ge 0$ we have $
	\frac{L}{C_L} \le \frac 29 \Bigl( 2+\frac{\eps}{2\sqrt{C_L}} \Bigr)^2,
	$ and, thus,
	\begin{align*}
		\frac{d}{d\eps} \mu_{-}^{\eps,b^*} &= \frac 12 \Bigl( \frac 32 - \frac 12 \Bigl(\bigl(2 \sqrt{C_L}+\frac{\eps}{2}\bigr)^2-4L\Bigr)^{-1/2}  \bigl(2 \sqrt{C_L}+\frac{\eps}{2}\bigr) \Bigr) \ge 0
	\end{align*}
	Therefore, for sufficiently small $\eps >0$ we have $\mu^*_3 \in (\frac{C_L}{b^*}+\frac{\eps}{2}, \mu_-^{\eps,b^*})$.
	Moreover, note that
	$
	\mu^* _3 < 2 \sqrt{C_L} = b^* + \frac{C_L}{b^*}
	$
	such that the parameters $b^*$ and $a=b^*+2\frac{C_L}{b^*}-\mu+\eps$ satisfy $ab^*\ge C_L$. Finally, for $\kappa = \frac 98$ we get $2(\sqrt{\kappa}-\sqrt{\kappa-1})\sqrt L= \sqrt{2L}$ and $\mu^*_3=\frac{5}{\sqrt 8}\sqrt L$.
\end{proof}

\begin{proof}[Proof of Corollary~\ref{cor:prob1}]
	Let $r>0$ such that $B_r(y) \subset \mathcal D$. Then, for every $r_0 < r$ we have $\{X_0 \in B_{r_0(y)}\} \subset \IA_0$.
	Choose $\bar \gamma, a,b, \eps >0$ as in Proposition~\ref{prop:convrate} and Lemma~\ref{lem:feasible1} such that $m=\min (a, \mu-a+b)>\eps$. Then, Proposition~\ref{prop:convrate} (i) states that there exists a constant $C(r_0)\ge 0$ such that  for all $n \in \N$
		$$
		\E[\1_{\IA_{n-1}}|V_n|^2]^{1/2} \le C(r_0) \exp\Bigl(-\frac 12 (m -\eps)t_n\Bigr).
		$$
		Note that, by Lemma~\ref{rem:Lipschitz}, the inverse PL-inequality \eqref{eq:inversePL} is satisfied for all $x \in B_{r/2}(y)$. Therefore, following \eqref{eq:Yconv2} together with \eqref{eq:238477} and \eqref{eq:2776362562} the constant $C(r_0)$ only depends on $\E[\1_{\IA_0}f(X_0)], \E[\1_{\IA_0} E_0]$ and $\E[\1_{\IA_0} |V_0|^2]$.		
		Now, using the fact that $f(y)=0$ and $\nabla f(y)=0$ one has $C(r_0)\to 0$ as $r_0 \to 0$.
	By Markov's inequality, we get for $r_0<r$
	\begin{align*}
		\P(\IA_\infty^c) &= \P \Bigl( \bigcup_{n=1}^\infty \IA_n^c \cap \IA_{n-1} \Bigr) \le \P \Bigl( \sup_{n \in \N} \1_{\IA_{n-1}} \sum_{i=1}^n \gamma_i |V_i| > r-r_0 \Bigr) \\
		& \le \frac{1}{r-r_0} \sum_{i=1}^\infty \gamma_i \E[  \1_{\IA_{i-1}}  |V_i|]\le \frac{C(r_0)}{r-r_0}  \int_0^{\infty}  \exp\Bigl(-\frac 12 ( m-\eps)t\Bigr) \, dt,
	\end{align*}
	and, thus, $\P(\IA_\infty^c)\to 0$ as $r_0 \to 0$.
\end{proof}

\section{Momentum stochastic gradient descent in continuous time} \label{sec:MSDE}
In this section, we study the diffusion process $(X_t)_{t \ge 0}$ defined in \eqref{eq:SDEintro}. 
We show that if the friction parameter is sufficiently large compared to the size of the stochastic noise we have almost sure exponential convergence of $(f(X_t))_{t \ge 0}$ for an objective function $f \in C^2$ that satisfies the PL-condition in an open set $\mathcal D \subset \R^d$.

\begin{theorem}\label{theoSDE1}
	Let $L, \sigma > 0$ and $\mathcal D \subset \R^d$ be an open set. Set $T:=\inf\{t \ge 0: X_t \notin \mathcal D\}$ and assume that for all $x \in \mathcal D$
	\begin{align} \label{eq:2863}
		|\grad f(x)|^2 \ge 2Lf(x)  \quad \text{ and } \quad  \|\Sigma(x)\|_F^2 \le \sigma f(x).
	\end{align}
	If $\mu>\frac{C_L\sigma}{4L}$ then:
	\begin{enumerate}
		\item[(i)] There exist $C,m>0$ such that for all $t \ge 0$
		$$
		\E[\1_{\{T>t\}}f(X_t)] \le C \exp(-mt).
		$$
		\item[(ii)] For all $m'<m$ we have $\exp(m't)f(X_t) \to 0$ almost surely on the event $\{T=\infty\}$.
		\item[(iii)] The process $(X_t)_{t \ge 0}$ converges almost surely on $\{T=\infty\}$.
	\end{enumerate}
\end{theorem}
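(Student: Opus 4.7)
My plan is to mirror the discrete-time proof of Theorem~\ref{theo1} in Section~\ref{sec:MSGD}, replacing finite-difference Taylor expansions by It\^o calculus. Consider the Lyapunov process $E_t := a f(X_t) + \langle \grad f(X_t), V_t\rangle + \frac{b}{2}|V_t|^2$, where $a,b > 0$ are parameters to be selected. The proof of Lemma~\ref{lem:Epositive} is purely pointwise and goes through verbatim, giving $E_t \ge 0$ whenever $ab\ge C_L$; the required inverse PL-inequality holds globally by Lemma~\ref{rem:Lipschitz}.

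It\^o's formula yields $df(X_t) = \langle \grad f(X_t), V_t\rangle\, dt$ together with
\begin{align*}
d|V_t|^2 &= \bigl(-2\mu|V_t|^2 - 2\langle V_t, \grad f(X_t)\rangle + \|\Sigma(X_t)\|_F^2\bigr)\, dt + 2\langle V_t, \Sigma(X_t)\,dW_t\rangle,\\
d\langle \grad f(X_t), V_t\rangle &= \bigl(\langle \Hess f(X_t) V_t, V_t\rangle - \mu\langle \grad f(X_t), V_t\rangle - |\grad f(X_t)|^2\bigr)\,dt + \langle \grad f(X_t), \Sigma(X_t)\,dW_t\rangle,
\end{align*}
the $f$-equation having no quadratic-variation contribution since $X$ is of bounded variation. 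Assembling, using $\|\Hess f\|\le C_L$, the PL-inequality $|\grad f|^2 \ge 2Lf$ and $\|\Sigma\|_F^2\le \sigma f$ on $\cD$, and substituting $\langle \grad f, V\rangle = E_t - af - \tfrac{b}{2}|V|^2$, one obtains for $t<T$
\begin{align*}
dE_t \le \bigl[-c\, E_t - A\cdot f(X_t) - B\cdot |V_t|^2\bigr]\,dt + dN_t,
\end{align*}
where $c := \mu+b-a$, $A := 2L - ac - \tfrac{b\sigma}{2}$, $B := \tfrac{b}{2}(\mu+a-b) - C_L$, and $N$ is a local martingale.

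The feasibility step — the continuous analog of Lemma~\ref{lem:feasible1} — is to find $a,b>0$ satisfying $c>0$, $A\ge 0$, $B\ge 0$, and $ab\ge C_L$. Writing $c = \mu+b-a$ and specializing to $c \to 0^+$ reduces these four inequalities to the single window $C_L/\mu \le b \le 4L/\sigma$, which is non-empty precisely when $\mu \ge C_L\sigma/(4L)$; with the hypothesis $\mu > C_L\sigma/(4L)$, continuity provides admissible $(a,b)$ with strict $c > 0$. Given such parameters, localize by $\tau_n \wedge T$ with $\tau_n := \inf\{t:|X_t|+|V_t|\ge n\}$, so the stopped martingale part becomes a true $L^2$-martingale by the $L^4$-moment bounds on $(X_t,V_t)$ recalled after \eqref{eq:SDEintro}. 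Taking expectations, dropping the non-positive $-Af - B|V|^2$ contribution, and applying Gr\"onwall gives $\E[\1_{\{T\wedge\tau_n>t\}}E_t]\le \E[E_0]e^{-ct}$; sending $n\to\infty$ by monotone convergence yields an $E$-bound, and converting this into an $f$-bound by an It\^o estimate on $e^{at}f(X_t)$ analogous to \eqref{eq:21378861} establishes (i) with $m = c$.

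For (ii), fix $m' < m$. The bound in (i) together with Markov's inequality makes $\P(e^{m' n}\1_{\{T>n\}}f(X_n) > \eps)$ summable in $n$, so Borel--Cantelli yields the convergence along integers; applying Doob's maximal inequality to the It\^o decomposition of $f(X_{t\wedge T})$ on each unit interval controls the oscillations and upgrades the conclusion to continuous $t$ on $\{T=\infty\}$. For (iii), the bound $\E[\1_{\{T>s\}}|V_s|^2]^{1/2}\le Ce^{-ms/2}$, derived from the $E_s$-estimate via Young's inequality as in \eqref{eq:Yconv2}--\eqref{eq:Yconv}, combined with Tonelli's theorem gives $\int_0^\infty |V_s|\,ds < \infty$ almost surely on $\{T=\infty\}$, so $X_t = X_0 + \int_0^t V_s\,ds$ converges a.s.\ there. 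The main obstacle is the parameter tuning: verifying that the feasibility region collapses to exactly $C_L/\mu \le b \le 4L/\sigma$ as $c \to 0^+$ and ensuring the remaining strict inequalities can be preserved for some $c>0$ — a purely algebraic task, but the one that pinpoints the sharp threshold $\mu > C_L\sigma/(4L)$.
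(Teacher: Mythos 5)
Your proposal mirrors the paper's proof (Proposition~\ref{prop:SDErate} plus Lemma~\ref{lem:2847}) quite closely: same Lyapunov function $E_t$, same It\^o computations, same algebraic conditions $c=\mu+b-a>0$, $\tfrac{b}{2}\sigma-a^2+a\mu+ab-2L\le 0$, $C_L-\tfrac{b}{2}(\mu+a-b)\le 0$, $ab\ge C_L$, and the same conversion from the $E$-decay to the $f$-decay. Two remarks. First, your feasibility argument is a slight variant of the paper's: you collapse $c\to 0^+$ to exhibit the window $C_L/\mu\le b\le 4L/\sigma$, whereas Lemma~\ref{lem:2847} instead enforces $C_L-\tfrac b2(\mu+a-b)=0$ (i.e.\ $a=b+2C_L/b-\mu$) and reads off the admissible $\mu$ from a quadratic. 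Both pin down the threshold $\mu>C_L\sigma/(4L)$; your route only yields a tiny rate $m=c$, but that suffices for part (i), which is a pure existence statement (the paper needs its parametrization for the quantitative Theorem~\ref{theoSDE2}). Second, the step ``take expectations and apply Gr\"onwall to $\E[\1_{\{T\wedge\tau_n>t\}}E_t]$'' elides a real technical point: this quantity can jump downward at times $T\wedge\tau_n$, so it is only lower semicontinuous and standard Gr\"onwall does not apply directly. The paper addresses precisely this by decomposing $\1_{\{T>t\}}E_t$ with a non-decreasing process $\xi_t$ (non-negative because $ab\ge C_L$ forces $E_T\ge 0$ by Lemma~\ref{lem:Epositive}) and invoking a Gr\"onwall-type inequality for lower semicontinuous functions (Proposition~2.3 of \cite{matusik2020finite}). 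Your sketch can be repaired along similar lines, or more simply by observing that $e^{c(t\wedge T\wedge\tau_n)}E_{t\wedge T\wedge\tau_n}$ is a non-negative supermartingale and then using $\1_{\{T\wedge\tau_n>t\}}e^{ct}E_t\le e^{c(t\wedge T\wedge\tau_n)}E_{t\wedge T\wedge\tau_n}$, but as written this is the one place where the argument is not yet rigorous.
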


In order to derive an explicit value for the convergence rate $m$, one has to solve a constrained optimization task. The exact formulation of the optimization task can be found in the statement of Lemma~\ref{lem:constrained1} below (see also Remark~\ref{rem:constrained2}).
Next, we give an estimate for the optimal choice of the friction parameter $\mu$ and the corresponding convergence rate.

\begin{theorem} \label{theoSDE2}
	Let $L,\sigma>0$.
	Define $C_L^* = C_L \vee \frac 98 L$, assume that $0<\sigma<4 \frac{L}{\sqrt{C_L^*}}$ and choose
	$$
	\mu = 2\sqrt{C_L^*} - \sqrt{C_L^* - L + \frac 14 \sqrt{C_L^*} \sigma}.
	$$
	Then, under the assumption \eqref{eq:2863} there exists a $C\ge 0$ such that
	$$
	\E[\1_{\{T>t\}} f(X_t)]\le C \exp(-mt),
	$$
	for all $t \ge 0$, where
	$$
	m= 2\Biggl(\sqrt{C_L^*} -\sqrt{C_L^*-L+\frac 14 \sqrt{C_L^*}\sigma}\Biggr).
	$$
\end{theorem}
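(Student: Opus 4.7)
The plan is to reduce Theorem~\ref{theoSDE2} to the Lyapunov framework underlying Theorem~\ref{theoSDE1} (i.e.\ the constrained optimization formalized in Lemma~\ref{lem:constrained1}) by exhibiting explicit parameters $(a,b)$ that realize the prescribed $\mu$ and $m$. I would work with $E_t = a f(X_t) + \langle \grad f(X_t), V_t\rangle + \tfrac{b}{2}|V_t|^2$ and apply It\^o's formula along \eqref{eq:SDEintro}. The drift collects three contributions: from $df(X_t)$ one gets $a\langle \grad f,V\rangle$; from $d\langle \grad f,V\rangle$ one gets $V^\top \Hess f(X_t) V - \mu\langle \grad f,V\rangle - |\grad f|^2$; from $d(\tfrac{b}{2}|V|^2)$ one gets $-b\mu|V|^2 - b\langle \grad f,V\rangle + \tfrac{b}{2}\|\Sigma(X_t)\|_F^2$. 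Using $V^\top \Hess f \, V \le C_L|V|^2$ (from Lipschitz $\grad f$), the PL-inequality $|\grad f|^2 \ge 2Lf$ and the noise bound $\|\Sigma\|_F^2 \le \sigma f$ on $\cD$, this yields, for $t<T$,
\begin{align*}
dE_t \le \Bigl(\tfrac{b\sigma}{2}-2L\Bigr)f(X_t)\,dt + (a-\mu-b)\langle \grad f(X_t),V_t\rangle\,dt + (C_L - b\mu)|V_t|^2\,dt + dM_t,
\end{align*}
with $M$ a local martingale.

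Substituting the identity $\langle \grad f,V\rangle = E_t - a f(X_t) - \tfrac{b}{2}|V_t|^2$ into the middle term gives
\begin{align*}
dE_t \le (a-\mu-b)E_t\,dt + \alpha\, f(X_t)\,dt + \beta\,|V_t|^2\,dt + dM_t,
\end{align*}
with $\alpha := \tfrac{b\sigma}{2}-2L-a(a-\mu-b)$ and $\beta := C_L - b\mu - \tfrac{b}{2}(a-\mu-b)$. The aim is to choose $a,b>0$ with $a-\mu-b=-m$ and $\alpha,\beta \le 0$, together with $ab \ge C_L$ to force the pathwise non-negativity of $E_t$ via Lemma~\ref{lem:Epositive} (the inverse PL bound $|\grad f|^2 \le 2C_L f$ needed there holds globally, since $\grad f$ is $C_L$-Lipschitz on $\R^d$ and $\inf f = 0$, by Lemma~\ref{rem:Lipschitz}).

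For the theorem's $\mu$, I would set $b=\sqrt{C_L^*}$ and $a = 2\sqrt{C_L^*} - m/2$. Then $a-\mu-b = -m$ is equivalent to $\mu = \sqrt{C_L^*} + m/2$, which together with $m = 2(\sqrt{C_L^*}-s)$ and $s := \sqrt{C_L^*-L+\tfrac14\sqrt{C_L^*}\sigma}$ recovers $\mu = 2\sqrt{C_L^*}-s$. Direct algebra gives $am = 2C_L^* - 2s^2 = 2L - \tfrac12\sqrt{C_L^*}\sigma$, hence $\alpha=0$; $\mu+a-b = 2\sqrt{C_L^*}$, hence $b(\mu+a-b) = 2C_L^*$ and $\beta = C_L - C_L^* \le 0$; and $ab = C_L^* + s\sqrt{C_L^*} \ge C_L^* \ge C_L$. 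The hypothesis $\sigma < 4L/\sqrt{C_L^*}$ forces $s<\sqrt{C_L^*}$, so $m>0$, and $a>m$ (equivalently $s>\tfrac13\sqrt{C_L^*}$) follows from $8C_L^*/9 \ge L$, which is built into the definition of $C_L^*$.

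For the probabilistic step I would localize at $T_N := T \wedge \inf\{t\ge 0: |X_t|+|V_t| \ge N\}$ so that $M_{\cdot \wedge T_N}$ is a true martingale (the required moments of $(X,V)$ being available by the $L^4$ assumption on $(X_0,V_0)$ and the standard SDE bounds cited after \eqref{eq:SDEintro}). Taking expectations in the differential inequality, discarding the non-positive $\alpha$- and $\beta$-terms, and applying Gronwall gives $\E[\1_{\{t<T_N\}}E_{t\wedge T_N}] \le e^{-mt}\E[E_0]$; passing $N\to\infty$ by Fatou and using $E_t\ge 0$ yields $\E[\1_{\{t<T\}}E_t] \le e^{-mt}\E[E_0]$. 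Finally, $\tfrac{d}{dt}f(X_t) = \langle \grad f(X_t),V_t\rangle \le E_t - a f(X_t)$ gives
\begin{align*}
\E[\1_{\{t<T\}} f(X_t)] \le e^{-at}\E[f(X_0)] + \int_0^t e^{-a(t-s)}\E[\1_{\{s<T\}}E_s]\,ds \le C e^{-mt},
\end{align*}
the last step using $a>m$ to dominate the convolution by $(a-m)^{-1} e^{-mt}$. The main obstacle is the parameter calibration: the form $C_L^* = C_L \vee \tfrac{9}{8}L$ arises because when $C_L<\tfrac98 L$ the $\beta$-constraint ceases to bind at $b=\sqrt{C_L}$ and one must enlarge the effective Lipschitz constant, paralleling the dichotomy seen in Theorem~\ref{theo2}; once $b$ is identified, the coupled equations $\alpha=0$ and $\beta \le 0$ pin down $m$ as the smaller root of a quadratic in $\mu$, which inverts to the stated formula.
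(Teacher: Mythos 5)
Your proposal is correct and follows essentially the same approach as the paper: the paper's proof simply invokes Proposition~\ref{prop:SDErate} via Lemma~\ref{lem:constrained1}/Remark~\ref{rem:constrained2} with exactly your parameter choice $b=\sqrt{C_L^*}$ and $a=3\sqrt{C_L^*}-\mu=\sqrt{C_L^*}+s$, whereas you unroll the underlying Lyapunov/It\^o computation and constraint verification inline. One small precision: your check that $a>m$ (equivalently $s>\tfrac13\sqrt{C_L^*}$) requires $\sigma>0$ in addition to $\tfrac89 C_L^*\ge L$, since when $C_L\le\tfrac98 L$ one has $\tfrac89 C_L^*=L$ exactly; this mirrors the paper's remark that $\Phi(b)>0$ because $\sigma>0$.
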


\begin{remark} \label{rem:compSDE}
	In this remark, we compare the convergence rate for the continuous-in-time MSGD \eqref{eq:SDEintro} with the continuous-in-time counterpart for SGD, which is given by the SDE
	\begin{align} \label{eq:SDESGD}
		d\hat X_t = -  \grad f(\hat X_t ) \, d t +  \Sigma(\hat X_t) \, d W_t.
	\end{align}
	In the non-overparameterized setting, convergence rates for the SDE \eqref{eq:SDESGD} have been derived in~\cite{dereich2022cooling}. Following the arguments in~\cite{dereich2022cooling} and using the assumptions of Theorem~\ref{theoSDE2}, it is straightforward to show that
	$$
	\E[\1_{\{T>t\}}f(\hat X_t)] \le C \exp(-m_{\operatorname{SGD}}t)
	$$
	with rate $m_{\operatorname{SGD}}=2L-\frac 12 C_L \sigma$. Moreover, choosing the objective function $f(x)=\frac{L}{2} x^2$ shows that $\E[\1_{\{T>t\}}f(\hat X_t)]$ does not converge to zero if $\sigma > 4 \frac {L}{C_L}$. In contrast, the MSGD process \eqref{eq:SDEintro} converges exponentially to the set of critical points for all $\sigma \ge 0$ as long as the friction parameter satisfies $\mu>\frac{C_L \sigma }{4 L}$. The explicit rate of convergence for \eqref{eq:MSGDintro} is given as the solution of an optimization task over the friction parameter $\mu$, see Lemma~\ref{lem:constrained1} and Remark~\ref{rem:constrained2}. In Figure~\ref{fig:SDErate} below this optimization task is solved numerically for different values of $L,C_L$ and $\sigma$ using \emph{fminsearch} in \emph{Matlab}.
	
	We observe that continuous-in-time MSGD converges faster compared to continuous-in-time SGD in the case of large noise or convergence to flat minima, i.e. small $L$, while a large condition number $\kappa = \frac{C_L}{L}$ weakens this effect for small noise. 
	
	\begin{figure}[H] 
		\centering
		\begin{subfigure}{0.32\textwidth}
			\centering
			\includegraphics[width=\linewidth]{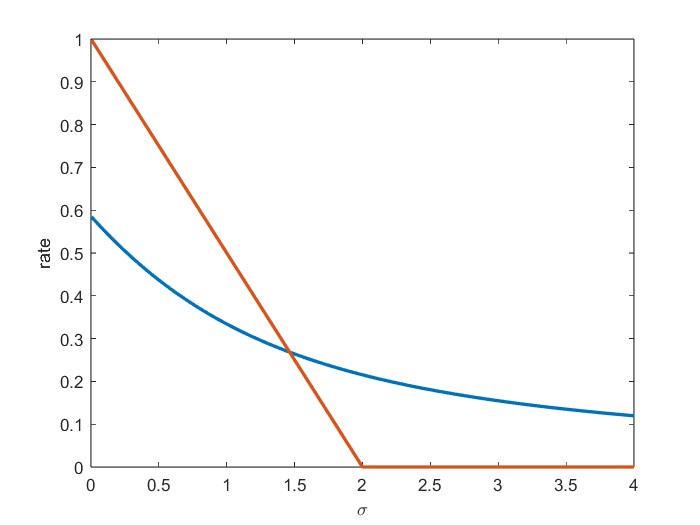}
			\caption{$L=0.5$, $C_L=1$}
			\label{fig:graph1}
		\end{subfigure}
		\hfill
		\begin{subfigure}{0.32\textwidth}
			\centering
			\includegraphics[width=\linewidth]{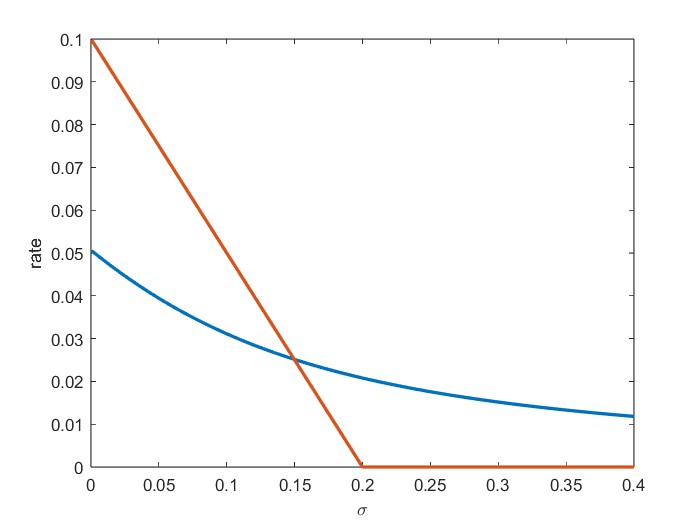}
			\caption{$L=0.005$, $C_L=1$}
			\label{fig:graph2}
		\end{subfigure}
		\hfill
		\begin{subfigure}{0.32\textwidth}
			\centering
			\includegraphics[width=\linewidth]{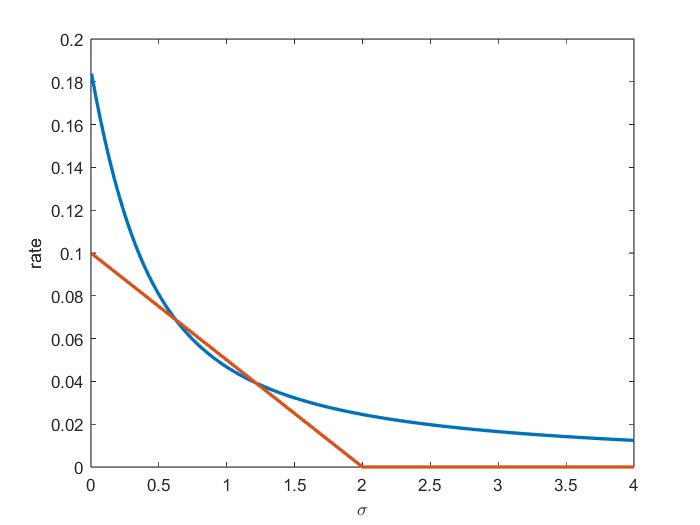}
			\caption{$L=0.05$, $C_L=0.1$}
			\label{fig:graph3}
		\end{subfigure}
		
		\caption{Comparison of the convergence rate $m$ for MSGD ({\blue blue}) and SGD ({\orange orange}) in continuous time in the sense of Theorem~\ref{theoSDE1} (i) depending on the noise intensity $\sigma$ ($x$-axis) for different values of $L$ and $C_L$.}
		\label{fig:SDErate}
	\end{figure}
\end{remark}

Theorem~\ref{theoSDE1} and Theorem~\ref{theoSDE2} are local analyses for the process $(X_t)_{t \ge 0}$ on the domain $\cD$, where $0$ is the only critical level and the stochastic noise vanishes as the objective function value approaches its minimum.\footnote{In this section, it is sufficient to assume that $0=\inf_{x \in \mathcal D} f(x)$.}
We can use estimates from the proof of Theorem~\ref{theoSDE1} to show that the expected length of the trajectory can be bounded by a constant that decays with the initial speed and the size of the gradient and value of the loss function at initialization. This allows us to bound the exit probability of the set $\mathcal D$. Thus, if we start the process $(X_t)_{t \ge 0}$ close to an optimal value in $\mathcal D$ and with small initial velocity $V_0$, $(X_t)_{t \ge 0}$ never hits the boundary of $\mathcal D$ and converges to a global minimum, with high probability.

\begin{corollary} \label{cor:probSDE}
	Let $y \in \cD$ with $f(y)=0$ and $\mu > \frac{C_L\sigma}{4L}$. Then, under the assumptions of Theorem~\ref{theoSDE1}, for every $\eps>0$ there exists an $r_0>0$ such that if $X_0 \in B_{r_0}(y)$, almost surely, and $\E[|V_0|^2]\le r_0$ we have that
	$$
	\P(T<\infty ) \le \eps.
	$$
\end{corollary}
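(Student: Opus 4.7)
The plan is to mirror the discrete-time argument used for Corollary~\ref{cor:prob1}, now transposed to the continuous-in-time setting. First, I would fix $r > 0$ with $B_r(y) \subset \mathcal{D}$ and work with $r_0 < r$, so that the hypothesis $X_0 \in B_{r_0}(y)$ forces $T > 0$ almost surely. The pathwise estimate $|X_t - X_0| \le \int_0^t |V_s|\, ds$ together with the fact that $|X_T - y| \ge r$ on $\{T < \infty\}$ yields, by the triangle inequality, the inclusion
\[
\{T < \infty\} \subseteq \Bigl\{\int_0^T |V_s|\, ds \ge r - r_0\Bigr\}.
\]
Markov's inequality then reduces the task to bounding $\E\bigl[\int_0^T |V_s|\, ds\bigr]$ by a quantity that vanishes as $r_0 \downarrow 0$.

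The key ingredient is an exponential bound on $\E[\1_{\{T>t\}}|V_t|^2]$. From the proof of Theorem~\ref{theoSDE1} one extracts parameters $a,b > 0$ with $ab \ge C_L$ and a rate $m > 0$ such that the Lyapunov functional $E_t = af(X_t) + \langle \nabla f(X_t), V_t\rangle + \tfrac{b}{2}|V_t|^2$ satisfies $\E[\1_{\{T>t\}}E_t] \le e^{-mt}\,\E[E_0]$. Combining this with the inverse PL inequality $|\nabla f|^2 \le 2 C_L f$ on $\mathcal{D}$ (Lemma~\ref{rem:Lipschitz}) and the same Young-type splitting that produced \eqref{eq:Yconv2} in the discrete case gives
\[
\E[\1_{\{T>t\}}|V_t|^2] \le \tfrac{4}{b}\,\E[\1_{\{T>t\}}E_t] + \tfrac{8 C_L}{b^2}\,\E[\1_{\{T>t\}}f(X_t)] \le K(r_0)\, e^{-mt},
\]
where $K(r_0)$ is a linear combination of $\E[E_0]$ and $\E[f(X_0)]$.

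To finish, I would observe that $f(y) = 0$ and $\nabla f(y) = 0$ together with the Lipschitz continuity of $\nabla f$ imply, via standard Taylor estimates, that $f(X_0) \le \tfrac{C_L}{2} r_0^2$ and $|\nabla f(X_0)| \le C_L r_0$ almost surely whenever $X_0 \in B_{r_0}(y)$. Together with $\E[|V_0|^2] \le r_0$ and Cauchy--Schwarz on the cross term of $E_0$, this forces $\E[E_0],\ \E[f(X_0)] \to 0$, hence $K(r_0) \to 0$, as $r_0 \downarrow 0$. Passing from the second moment to the first via Jensen's inequality and integrating the exponential decay then yields
\[
\P(T < \infty) \le \frac{1}{r - r_0}\int_0^\infty K(r_0)^{1/2}\, e^{-mt/2}\, dt = \frac{2\, K(r_0)^{1/2}}{m(r - r_0)},
\]
which can be made smaller than any prescribed $\eps$ by choosing $r_0$ sufficiently small.

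The main technical obstacle is making the Lyapunov estimate from Theorem~\ref{theoSDE1} rigorous in the presence of the stopping time $T$. One would apply Ito's formula to $(E_{t \wedge T \wedge \tau_R})_{t \ge 0}$ for a localizing sequence $\tau_R \uparrow \infty$, control the martingale contribution using the $L^4$-integrability of $(X_t, V_t)$ noted after \eqref{eq:SDEintro}, and pass to the limit $R \to \infty$ via monotone or dominated convergence. Once this localization is handled cleanly, the remainder is a direct transcription of the proof of Corollary~\ref{cor:prob1}.
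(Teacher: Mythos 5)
Your proposal is correct and follows essentially the same route as the paper's proof: reduce $\{T<\infty\}$ to a trajectory-length event, apply Markov's inequality, invoke the exponential decay of $\E[\1_{\{T>t\}}|V_t|^2]$ furnished by Proposition~\ref{prop:SDErate} (via Lemma~\ref{lem:2847} and the Young-type splitting \eqref{eq:2374278624}), and let $r_0\downarrow 0$ so that the prefactor, which depends only on $\E[\1_{\{T>0\}}E_0]$ and $\E[\1_{\{T>0\}}f(X_0)]$, tends to zero by the Taylor estimates at the critical minimum $y$. Your explicit computation that $f(X_0)\le\frac{C_L}{2}r_0^2$ and $|\nabla f(X_0)|\le C_L r_0$, and your remark on localization of the It\^{o} argument, merely spell out steps the paper leaves implicit.
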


\begin{remark}\label{rem:Lip2}
	Note that $C_L$ denotes the Lipschitz constant of $\grad f$. The precise value of the Lipschitz constant of $\Sigma$ does not appear in the statements of the results.
	We can weaken the assumptions on the Lipschitz continuity of $\grad f$ and $\Sigma$ in the following sense.
	Assume that $\grad f$ and $\Sigma$ are only Lipschitz continuous on $\mathcal D$. 
	Then, there exists a continuous semimartingale $(X_t,V_t)_{t \ge 0}$ satisfying \eqref{eq:SDEintro} up to the stopping time $T=\inf\{t \ge 0: X_t \notin \mathcal D\}$. Now, in order to derive the statements of Theorem~\ref{theoSDE1} and Theorem~\ref{theoSDE2} it is sufficient to assume $\inf_{x \in \overline{\mathcal D}}f(x)=0$ and, for all $x \in \mathcal D$,
	$$
	|\grad f(x)|^2\le 2 C_L f(X_t),
	$$
	where $C_L$ denotes the Lipschitz constant of $\grad f$ on $\mathcal D$. Lemma~\ref{rem:Lipschitz} shows how the latter inequality follows from Lipschitz continuity of $\grad f$ on a larger domain. For the statement of Theorem~\ref{theo1} (ii) one additionally needs Lipschitz continuity of $\grad f$ and $\Sigma$ on a convex set containing $\mathcal D$.
\end{remark}

We start proving the main results of this section. The following proposition gives exponential convergence for the expectation of the objective function value under technical assumptions on the parameters $\mu,C_L,L$ and $\sigma$. Again, the proofs are based on the random Lyapunov function $(E_t)_{t \ge 0}$ defined by
\begin{align*} 
	E_t = a f(X_t) + \langle \grad f(X_t), V_t \rangle + \frac{b}{2} |V_t|^2.
\end{align*}
Note that $(E_t)_{t \ge 0}$ is a continuous, integrable process. If $(X_t)_{t \ge 0}$ is able to leave $\mathcal D$ we have to make sure that the Lyapunov function is non-negative at the exit time which is satisfied if $ab \ge C_L$, see Lemma~\ref{lem:Epositive}.

\begin{proposition} \label{prop:SDErate}
	Let $L, \sigma > 0$. Let $T$ be an $(\cF_t)_{t \ge 0}$-stopping time such that for all $t\ge 0$ on $\{T>t\}$
	\begin{align} \label{eq:PL}
		2Lf(X_t) \le |\grad f(X_t)|^2 \quad \text{ and } \quad  \|\Sigma(X_t)\|_F^2 \le \sigma f(X_t).
	\end{align}
	Furthermore, let $a,b >0 $ and suppose that
	\begin{align} \label{eq:128188}
		\mu-a+b > 0 \; , \; \frac{b}{2}\sigma-a^2+a\mu+ab- 2L \le 0 \; \text{and} \;   C_L-\frac b2 (\mu+a-b) \le 0 .
	\end{align}
	If $\P(T = \infty)<\P(T>0)$ additionally assume that $ab\ge C_L$.
	Then:
	\begin{enumerate}
		\item[(i)] There exist a constant $C>0$ such that
		\begin{align*}
			\max(\E[\1_{\{T>t\}} f(X_t)], \E[&\1_{\{T>t\}}|V_t|^2])\\
			&\le 
			\begin{cases}
				C \exp(-mt), & \text{ if } a \neq \mu-a+b, \\
				C(1+ t) \exp(-mt), & \text{ if } a = \mu-a+b,
			\end{cases} 
		\end{align*}
		for all $t\ge 0$, where $m=\min(a,\mu-a+b)$.
		\item[(ii)] $(X_t)_{t \ge 0}$ converges almost surely on $\{T=\infty\}$.
	\end{enumerate}
\end{proposition}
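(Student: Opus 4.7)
The plan is to apply It\^o's formula to the Lyapunov process $(E_t)_{t\ge 0}$ and derive a supermartingale-type inequality $dE_t \le -m E_t\,dt + dM_t$ on $\{t<T\}$ with $m=\mu-a+b>0$, then translate this decay to bounds on $\E[\1_{\{T>t\}}f(X_t)]$ and $\E[\1_{\{T>t\}}|V_t|^2]$, and finally deduce the almost sure convergence of $X_t$ on $\{T=\infty\}$.

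First, since $X_t$ has finite variation, both $df(X_t)=\langle \grad f(X_t),V_t\rangle\,dt$ and $d\grad f(X_t)=\Hess f(X_t)V_t\,dt$ carry no It\^o correction, while $d|V_t|^2$ produces the quadratic variation term $\|\Sigma(X_t)\|_F^2\,dt$. Bounding $\langle V,\Hess f(X)V\rangle \le C_L|V|^2$ and substituting $\langle \grad f(X_t),V_t\rangle = E_t - af(X_t) - (b/2)|V_t|^2$, the drift of $E_t$ can be rewritten as
\begin{equation*}
(a-\mu-b)E_t + \bigl(a\mu - a^2 + ab + \tfrac{b\sigma}{2} - 2L\bigr)f(X_t) + \bigl(C_L - \tfrac{b}{2}(\mu+a-b)\bigr)|V_t|^2,
\end{equation*}
which by \eqref{eq:128188} and the non-negativity of $f,|V_t|^2$ is bounded above by $-mE_t$. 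The $dW_t$-integrals $\langle \grad f(X_t),\Sigma(X_t)\,dW_t\rangle$ and $b\langle V_t,\Sigma(X_t)\,dW_t\rangle$ become true martingales after localizing with $\tau_n := \inf\{t:|X_t|+|V_t|\ge n\}$, thanks to Lipschitz continuity of $\grad f$ and $\Sigma$ together with the $L^4$-moment bounds on $(X_t,V_t)$.

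Second, I would argue that $Z_t := e^{m(t\wedge T)}E_{t\wedge T}$ is a non-negative local supermartingale. Non-negativity on $\{t<T\}$ follows from Lemma~\ref{lem:Epositive} together with the inverse PL-inequality $|\grad f|^2\le 2C_L f$, which is globally valid by Lemma~\ref{rem:Lipschitz} under the global Lipschitz assumption on $\grad f$ and $\inf f=0$; the extra hypothesis $ab\ge C_L$ ensures $E_T\ge 0$ also at the exit point. Localizing with $\tau_n$ and letting $n\to\infty$ via Fatou's lemma applied to the non-negative processes $Z_{t\wedge \tau_n}$ yields $\E[\1_{\{T>t\}}E_t] \le \E[E_0]e^{-mt}$. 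To transfer this decay to $f$, I would use the integrating-factor identity $d(e^{at}f(X_t)) = e^{at}(E_t-(b/2)|V_t|^2)\,dt \le e^{at}E_t\,dt$; stopping at $T$ and taking expectation gives
\begin{equation*}
\E[\1_{\{T>t\}}f(X_t)] \le e^{-at}\Bigl(\E[f(X_0)]+\int_0^t e^{as}\E[\1_{\{T>s\}}E_s]\,ds\Bigr),
\end{equation*}
which after evaluating the integral produces the rate $\min(a,m)$, with the extra factor $(1+t)$ precisely in the borderline case $a=m$. The corresponding bound on $\E[\1_{\{T>t\}}|V_t|^2]$ follows from the Young-type lower estimate $(b/4)|V|^2 \le E + (2C_L/b)f$, obtained from $|\grad f|\,|V| \le |\grad f|^2/b + b|V|^2/4$ combined with the inverse PL-inequality.

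Finally, for part (ii) I would control the total path length: by Cauchy--Schwarz and part (i), $\E\bigl[\1_{\{T=\infty\}}\int_0^\infty |V_s|\,ds\bigr] \le \int_0^\infty \E[\1_{\{T>s\}}|V_s|^2]^{1/2}\,ds < \infty$, so that $X_t = X_0+\int_0^t V_s\,ds$ is almost surely Cauchy on $\{T=\infty\}$ and hence convergent. The main technical obstacle is the careful book-keeping around the stopping time $T$ --- in particular ensuring $E_{t\wedge T}\ge 0$ even when the process may reach $\partial\mathcal{D}$: this is precisely where the extra hypothesis $ab\ge C_L$ in the case $\P(T=\infty)<\P(T>0)$ enters, so that Fatou's lemma delivers the correct one-sided inequality for $\E[Z_t]$.
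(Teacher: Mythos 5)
Your proposal is correct and follows essentially the same strategy as the paper: It\^o's formula on the Lyapunov process $E_t = af(X_t)+\langle\grad f(X_t),V_t\rangle+\tfrac b2|V_t|^2$, the identical drift decomposition via the substitution $\langle\grad f,V\rangle = E-af-\tfrac b2|V|^2$, non-negativity of $E$ via Lemma~\ref{lem:Epositive} and the globally valid inverse PL-inequality, the integrating-factor transfer from $E_t$ to $f(X_t)$, the Young-type lower bound on $|V_t|^2$, and the path-length argument for part (ii).

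The one genuine difference is in how the Gronwall step is executed. You build the local supermartingale $Z_t=e^{m(t\wedge T)}E_{t\wedge T}$, localize via $\tau_n$, and pass to the limit by Fatou; the paper instead absorbs the stopped process into the modified integrand via the explicit decomposition $d\tilde E_t = (\ldots)\,dt + dM_t - d\xi_t$ (with $\xi$ the non-negative jump at $T$), passes to the deterministic function $e_t=\E[\1_{\{T>t\}}E_t]$, notes it is lower semicontinuous, and invokes a Gronwall lemma for differential inequalities of lower semicontinuous functions (Proposition~2.3 of~\cite{matusik2020finite}). Both routes rely on the hypothesis $ab\ge C_L$ for the same reason (non-negativity of $E$ at the exit time $T$), and both handle the martingale part via the available $L^2/L^4$ moment bounds. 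Your supermartingale phrasing is arguably more self-contained, at the cost of making the localization and Fatou step explicit; the paper's route delegates the technicalities to the cited deterministic lemma. One minor remark: your bound $\tfrac b4|V|^2\le E+\tfrac{2C_L}{b}f$ keeps the constant explicit, whereas the paper's stated $\tfrac b4\E[\1|V|^2]\le e_t+\tfrac1b\varphi_t$ appears to have dropped a factor $2C_L$; as this only affects the multiplicative constant $C$, it does not change the conclusion.
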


\begin{proof}
	(i): 
	First, we show the exponential convergence of $(\E[\1_{\{T>t\}} E_t])_{t \ge 0}$ and, afterwards, we show that this implies (i). 
	
	Since $(X_t)_{t \ge 0}$ is of bounded variation we get by It\^{o}'s formula that
	\begin{align*}
		df(X_t) = \langle \grad f(X_t), V_t \rangle dt \quad \text{ and } \quad d |V_t|^2 = 2 \langle V_t , dV_t\rangle  +   \|\Sigma(X_t)\|_F^2 dt,
	\end{align*}
	and by It\^{o}'s product rule that
	$$
	d \langle \grad f(X_t), V_t \rangle = \langle \nabla f(X_t), dV_t \rangle  + \langle V_t, \Hess f(X_t) V_t \rangle dt.
	$$	
	Thus, for $(\tilde E_t)_{t \ge 0}= (\1_{\{T > t\}} E_t)_{t \ge 0}$ we have
	\begin{align*}
		d \tilde E_t = & \1_{\{T>t\}}\Bigl((a-\mu-b) \langle \grad f(X_t), V_t \rangle - |\grad f(X_t)|^2 - b \mu |V_t|^2 + \langle V_t, \Hess f(X_t) V_t \rangle \\
		&+ \frac{b}{2}  \|\Sigma(X_t)\|_F^2 \Bigr) dt + dM_t - d\xi_t,
	\end{align*}
	where $(M_t)_{t \ge 0}$ denotes the $L^2$-martingale
	$$
	(M_t)_{t \ge 0} = \Bigl(  \int_0^{T\wedge t}\langle \grad f(X_u) + b V_u, \Sigma(X_u) dW_u \rangle \Bigr)_{t \ge 0},
	$$
	and $(\xi_t)_{t \ge 0}$ denotes the (almost surely) non-negative and increasing process given by
	$$
	\xi_t :=  \begin{cases}
		0, & \text{ if } t<T \text{ or } T=0, \\
		E_T, & \text{otherwise.}
	\end{cases}
	$$
	Using \eqref{eq:PL} and the Lipschitz continuity of $\grad f$, we get, for all $0 \le s < t $,
	\begin{align*}
		\tilde E_t-\tilde E_s \le & \int_{s \wedge T}^{t \wedge T} (a-\mu-b) \langle \grad f(X_u), V_u \rangle - |\grad f(X_u)|^2 - (b \mu - C_L ) |V_u|^2 \,  du \\
		& +\int_{s \wedge T}^{t \wedge T} \frac{b}{2}\sigma  f(X_u) \,  du+ M_t-M_s -(\xi_t-\xi_s).
	\end{align*}
	By definition of $(E_t)_{t \ge 0}$, we have, for all $u \ge 0$,
	$$
	\langle \grad f(X_u), V_u \rangle =  E_u - a f(X_u)  -\frac{b}{2} |V_u|^2,
	$$
	so that, using the PL-inequality \eqref{eq:PL}, we get
	\begin{align*}
		\tilde E_t-\tilde E_s \le  & \int_{s \wedge T}^{t \wedge T} (a-\mu-b) \tilde E_u - \Bigl(\frac b2 \mu - C_L+\frac{b}{2}a  -\frac{b^2}{2} \Bigr) |V_u|^2 \,  du \\
		&+\int_{s \wedge T}^{t \wedge T}\Bigl(\frac{b}{2}\sigma -a^2+a\mu+ab-2L\Bigr) f(X_u) \, ds + M_t-M_s -(\xi_t-\xi_s).
	\end{align*}
	With the dominated convergence theorem $(e_t)_{t \ge 0}:=(\E[\1_{\{T>t\}} E_t])_{t \ge 0}$ is lower semicontinuous such that using \eqref{eq:128188} and Proposition~2.3 in~\cite{matusik2020finite} we have $e_t \le e_0 \exp((a-\mu-b)t)$, for all $t \ge 0$.

	Next, we use the estimates for $(e_t)_{t \ge 0}$ in order to derive a rate of convergence for $\varphi_t=\E[\1_{\{T > t \}}f(X_t)]$. Recall that
	$$
	df(X_t) = \langle \grad f(X_t), V_t \rangle \, dt = E_t dt - a f(X_t) dt -\frac{b}{2} |V_t|^2 dt.
	$$
	Thus, $(\tilde f_t)_{t \ge 0} := (\1_{\{T > t \}}f(X_t))_{t \ge 0}$ is a non-negative process that satisfies
	$$
	d\tilde f_t = \1_{\{T>t\}} \bigl( E_t-af(X_t)-\frac b2 |V_t|^2\bigr) dt - d\zeta_t,
	$$
	where $(\zeta_t)_{t \ge 0}$ is a non-negative, increasing process given by
	$$
	\zeta_t :=  \begin{cases} 0, & \text{ if }t<T \text{ or } T=0 \\
		f(X_T),& \text{otherwise.}
	\end{cases} 
	$$
	Taking expectation, we note that $(\varphi_t)_{t \ge 0}$ is lower semicontinuous and, for all $0\le s < t$, we have
	$$
	\varphi_t-\varphi_s \le \E\Bigl[\int_s^t \1_{\{T<u\}} (E_u - a f(X_u)) \, du \Bigr]= \int_s^t (e_u - a \varphi_u) \, du.
	$$
	Using Proposition~2.3 in~\cite{matusik2020finite} we get for all $t \ge 0$ that 
	\begin{align*}
		\varphi_t &\le \varphi_0 \exp(-a t) + \int_0^t \exp(a (s-t)) e_s ds \\
		&= \varphi_0 \exp(-at) + e_0 \exp(-at) \int_0^t \exp((2a-\mu-b) s) ds. 
	\end{align*} 
	\begin{align*}
		\varphi_t \le \varphi_0 \exp(-at) + e_0  \Bigl( \frac{1}{2a-\mu-b} \bigl(\exp((a-\mu-b) t)-\exp(-at)\bigr) \Bigr)
	\end{align*}
	Conversely, for $2a-\mu-b=0$ we get
	\begin{align*}
		\varphi_t &\le (\varphi_0+e_0 t) \exp(-at).
	\end{align*}
	
	Regarding the convergence of $(\E[\1_{\{T>t\}} |V_t|^2])_{t \ge 0}$ note that since $f(X_t)\ge 0$
	\begin{align*}
		\frac b2 |V_t|^2 &\le E_t - \langle \grad f(X_t), V_t \rangle \le E_t + |\grad f(X_t)| \, |V_t|,
	\end{align*}
	which, analogously to \eqref{eq:Yconv2} implies 
	\begin{align}  \label{eq:2374278624}
		\frac b4\E[\1_{\{T>t\}}|V_t|^2] \le e_t + \frac 1b \varphi_t.
	\end{align}
	By the computations above, there exists a constant $C\ge 0$ such that 
	$$
	\E[\1_{\{T>t\}}|V_t|^2]\le 	\begin{cases}
		C \exp(-mt), & \text{ if } a \neq \mu-a+b, \\
		C(1+ t) \exp(-mt), & \text{ if } a = \mu-a+b.
	\end{cases} 
	$$
	
	(ii):
	Using (i), we get 
	\begin{align*}
		\E\Bigl[\int_0^T |V_s|\,  ds\Bigl] \le \int_0^\infty \E[\1_{\{T>s\}}|V_s|^2]^{1/2} \, ds < \infty
	\end{align*}
	such that $\int_0^T |V_s|\,  ds$ is almost surely finite. Since, 
	$|X_{t}-X_{s}|\le \int_{s}^{t} |V_u|\,  du$, for all $0\le s \le t$, $(X_t)_{t \ge 0}$ converges almost surely on $\{T=\infty\}$.
\end{proof}

The next lemma shows that, if the friction is sufficiently large compared to the size of the stochastic noise, we may find parameters $a,b >0$ such that Proposition~\ref{prop:SDErate} applies.

\begin{lemma} \label{lem:2847}
	Let $L,\sigma >0$. Then, for all $\mu >\frac{C_L \sigma}{4 L}$ there exist $a,b >0$ such that \eqref{eq:128188} holds and $ab \ge C_L$. 
\end{lemma}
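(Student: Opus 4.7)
My plan is to mirror the argument of Lemma~\ref{lem:feasible1}, modifying it to absorb the extra noise term $\frac{b}{2}\sigma$ in condition 2 of \eqref{eq:128188}. Fix $\mu > \frac{C_L \sigma}{4L}$ and propose the one-parameter ansatz
\begin{align*}
a = b + \frac{2C_L}{b} - \mu + \epsilon,
\end{align*}
with auxiliary parameters $b > 0$ and $\epsilon > 0$ to be chosen. Substitution rewrites condition 1 of \eqref{eq:128188} as $\mu > \frac{C_L}{b} + \frac{\epsilon}{2}$, condition 3 as $-\frac{b\epsilon}{2} \leq 0$ (automatic), and $ab \geq C_L$ as $\mu \leq b + \frac{C_L}{b} + \epsilon$. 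For condition 2, expanding $a(\mu + b - a) = -a^2+a\mu+ab$ and using the algebraic identity $(b + 3C_L/b)^2 - 4C_L - 8C_L^2/b^2 = (b + C_L/b)^2$, the inequality at $\epsilon = 0$ becomes a quadratic in $\mu$ whose roots are
\begin{align*}
\mu_{\pm}(b) = \frac{1}{2}\Bigl(b + \frac{3C_L}{b}\Bigr) \pm \frac{1}{2}\sqrt{\Bigl(b + \frac{C_L}{b}\Bigr)^2 + b\sigma - 4L},
\end{align*}
with discriminant non-negative since $C_L \geq L$.

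The key observation is the equivalence
\begin{align*}
\mu_-(b) > \frac{C_L}{b} \quad \Longleftrightarrow \quad b\sigma < 4L,
\end{align*}
obtained by a single squaring of $\tfrac{1}{2}(b + \tfrac{C_L}{b}) > \tfrac{1}{2}\sqrt{(b+C_L/b)^2 + b\sigma - 4L}$. The same squaring applied at the canonical choice $b_0 := \frac{C_L}{\mu}$ yields $\mu_-(b_0) > \mu$ if and only if $\mu > \frac{C_L\sigma}{4L}$, i.e.\ precisely the hypothesis of the lemma. Hence at $b_0$ the compatibility window $(\frac{C_L}{b_0}, \mu_-(b_0))$ for conditions 1 and 2 (at $\epsilon = 0$) has the prescribed value $\mu$ exactly on its left boundary, with strict room above.

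To conclude I would then use continuity: choose $b$ slightly larger than $b_0$ so that $\frac{C_L}{b} < \mu < \mu_-(b)$ holds strictly, and then take $\epsilon > 0$ small enough to preserve both conditions 1 and 2 in their $\epsilon$-perturbed forms. The inequality $ab \geq C_L$ (i.e.\ $\mu \le b + C_L/b + \epsilon$) is automatic because $b_0 + \frac{C_L}{b_0} = \frac{C_L}{\mu} + \mu > \mu$, leaving slack that survives the perturbation. The main (and essentially only) obstacle is the algebraic simplification that puts $\mu_\pm$ in the closed form above; once this is done, the two squaring identities controlling the threshold $\mu = \frac{C_L\sigma}{4L}$ are one and the same computation, and the rest is bookkeeping.
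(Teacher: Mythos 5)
Your proof is correct and follows essentially the same approach as the paper: with the ansatz $a = b + 2C_L/b - \mu$ one computes the quadratic-in-$\mu$ arising from the second condition in \eqref{eq:128188}, observes that its smaller root exceeds $C_L/b$ exactly when $b\sigma < 4L$, notes $ab \ge C_L$ amounts to $\mu \le b + C_L/b$, and lets $b$ approach $C_L/\mu$ from above to admit every $\mu > C_L\sigma/(4L)$. The only cosmetic difference is that you carry over the $\epsilon$-shift from Lemma~\ref{lem:feasible1}, which is unnecessary here since the second and third constraints in \eqref{eq:128188} are non-strict (unlike their discrete counterparts in \eqref{eq:238877}), allowing the paper to take $\epsilon = 0$ outright.
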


\begin{proof}
	Let $b>0$ and choose $a=b+2 \frac{C_L}{b}-\mu$. Note that $a>0$ iff $\mu < b+2\frac{C_L}{b}$ and $a-\mu-b < 0$ iff $\mu > \frac {C_L}{b}$. Now, $\frac b2 \mu - C_L+\frac{b}{2}a  -\frac{b^2}{2}=0$ and 
	\begin{align*}
		\frac{b}{2}\sigma-a^2+a\mu+ab- 2L= -2 \mu^2+\Bigl( 2b+\frac{6 C_L}{b} \Bigr) \mu - 4 \frac{C_L^2}{b^2} - 2 (C_L+L)+\frac{b}{2} \sigma.
	\end{align*}
	The right-hand side of the latter equation is a quadratic function that is only positive between the roots
	\begin{align} \label{eq:12341}	
		\mu_\pm^b = \frac 12 \Bigl( b+\frac{3C_L}{b} \pm \sqrt{\Bigl(b+\frac{C_L}{b}\Bigr)^2 - 4 L + b \sigma} \Bigr). 
	\end{align}
	Note that, for $b<\frac{4L}{\sigma}$ we have that $\mu_-^b > \frac{C_L}{b}$ and $\mu_+^b < b+2\frac {C_L}{b}$. Moreover, the assumption $ab\ge C_L$ is satisfied iff $\mu \le b+\frac {C_L}{b}$. Thus, the set of friction parameters $\mu$ that satisfy \eqref{eq:128188} for the given pair $(a,b)$ is equal to
	$$
	\Bigl(\frac{C_L}{b}, \mu_-^b\Bigr] \cup \Bigl[\mu_+^b, b+2\frac{C_L}{b}\Bigr)
	$$
	and the set of friction parameters $\mu$ that, additionally, satisfy  $ab \ge C_L$ for the given pair $(a,b)$ is contained in 
	$
	(\sfrac{C_L}{b}, \mu_-^b \wedge (b+\sfrac{C_L}{b})).
	$
	Note that, for all $0<b<\frac{4L}{\sigma}$, the latter interval is non-empty, the upper and lower bounds are continuous in $b$ and the lower bound satisfies 
	$$
	\frac{C_L}{b} \overset{b \to 0}{\longrightarrow} \infty \quad \text{ and } \quad \frac{C_L}{b} \overset{b \to 4L/\sigma}{\longrightarrow} \frac{C_L \sigma}{4 L}.
	$$
	We thus showed that for every $\mu>\frac{C_L\sigma}{4L}$ there exists a pair $(a,b)$ such that \eqref{eq:128188} is satisfied and $ab \ge C_L$.
\end{proof}

We are now in the position to prove Theorem~\ref{theoSDE1}. The second part of the proof is more involved compared to the corresponding result in discrete time since we cannot immediately use the Borel-Cantelli lemma. In an additional step, we have to show that the process does not deviate too much from the values it takes at discrete times. 

\begin{proof}[Proof of Theorem~\ref{theoSDE1}]
	(i) and (iii): 
	Clearly, $T$ is an $(\cF_t)_{t \ge 0}$-stopping time satisfying the assumption of Proposition~\ref{prop:SDErate}.
	By Lemma~\ref{lem:2847}, there exist parameters $a, b>0$ such that \eqref{eq:128188} holds and $ab \ge C_L$. We let $m= \min(a,\mu-a+b)$ if $a\neq \mu-a+b$ and $m\in(a,\infty)$, otherwise. Then, Proposition~\ref{prop:SDErate} implies that there exists a $C>0$ such that for all $t \ge 0$
	$$
	\E[\1_{\{T>t\}} f(X_t)]\le C \exp(-mt).
	$$
	and $(X_t)_{t \ge 0}$ converges almost surely on the event $\{T=\infty\}$.
	
	(ii): We denote $C_L' := \|\grad f\|_{\mathrm{Lip}(\R^d)}\vee \|\Sigma\|_{F,\mathrm{Lip}(\R^d)}$, where $\|\cdot\|_{F, \mathrm{Lip}(\R^d)}$ is the Lipschitz norm that is induced by the Frobenius norm. 
	Let $n \in \N_0$ and note that for $t \in [n,n+1]$
	$$
	\sup_{s \in [n,t]} \1_{\{T > s\}} |X_{s}-X_n|^2 \le \int_{n\wedge T}^{t\wedge T} |V_u|^2 \, du \le  (t-n) \sup_{s \in {[n,t]}} \1_{\{T >s\}} |V_{s}|^2 .
	$$
	Now,
	\begin{align*}
		\E\Bigl[ \sup_{s \in [n,t]} \1_{\{T>s\}} |V_s|^2 \Bigr]&\le 4 \Bigl( \E[ \1_{\{T>n\}} |V_n|^2 ] + \mu^2 \E\Bigl[ \int_n^{t} \sup_{u \in [n,s]} \1_{\{T>u\}} |V_u|^2 \, ds \Bigr] \\
		& \quad  +  \E\Bigl[ \int_{n \wedge T}^{t \wedge T}  |\grad f(X_u)|^2 \, du \Bigr] + \E\Bigl[ \sup_{s \in [n, t]} \Bigl| \int_{n \wedge T}^{s \wedge T}  \Sigma(X_u) \, dW_u \Bigr|^2 \Bigr] \Bigr).
	\end{align*}
	Using the Lipschitz continuity of $\grad f$, we get
	\begin{align*}
		\E\Bigl[ \int_{n \wedge T}^{t \wedge T}  |\grad f(X_u)|^2 \, du \Bigr]
		&\le 2\E[ \1_{\{T>n\}} |\grad f(X_n)|^2] + 2(C_L')^2\E\Bigl[ \int_{n}^{t} \sup_{s \in {[n,u]}} \1_{\{T>s\}} |V_{s}|^2 \, du \Bigr].
	\end{align*}
	Moreover, using Doob's $L^2$-inequality and the It\^{o}-isometry,
	\begin{align*}
		\E\Bigl[ \sup_{s \in [n, t]} \Bigl| \int_{n \wedge T}^{s \wedge T}  &\Sigma(X_u) \, dW_u \Bigr|^2 \Bigr]  \le 4 \E\Bigl[ \int_{n \wedge T}^{t\wedge T} \|\Sigma(X_u)\|_F^2  \, du \Bigr] \\
		&\le 8 \E[\1_{\{T>n\}}\|\Sigma(X_n)\|_F^2] + 8(C_L')^2 \E\Bigl[ \int_n^{t} \sup_{s \in {[n,u]}} \1_{\{T >s\}} |V_{s}|^2 \, du \Bigr].
	\end{align*}
	Hence,
	\begin{align*}
		\E\Bigl[ \sup_{s \in [n,t]} \1_{\{T>s\}} |V_s|^2 \Bigr]&\le 32 \Bigl( \E[ \1_{\{T>n\}} (|V_n|^2+|\grad f(X_n)|^2+\|\Sigma(X_n)\|_F^2) ] \\
		&+ (\mu^2+2(C_L')^2)  \int_n^{t} \E\Bigl[ \sup_{s \in [n,u]} \1_{\{T>s\}} |V_s|^2  \Bigr] \, du\Bigr).
	\end{align*}
	Thus, by Gronwall's inequality there exists a constant $C\ge 0$ such that for all $n \in \N_0$ and $n\le t \le n+1$ we have 
	$$
	\E\Bigl[ \sup_{s \in [n,t]} \1_{\{T>s\}} |V_s|^2 \Bigr] \le C \,  \E[ \1_{\{T>n\}} (|V_n|^2+|\grad f(X_n)|^2+\|\Sigma(X_n)\|_F^2) ].
	$$
	Using Proposition~\ref{prop:SDErate} (i), Lemma~\ref{rem:Lipschitz} and \eqref{eq:PL}, there exist a constants $C, > 0$ such that for all $n \in \N_0$
	\begin{align*}
		\E[ \1_{\{T>n\}} (|V_n|^2+|\grad f(X_n)|^2+\|\Sigma(X_n)\|_F^2) ] \le C \exp(-mn).
	\end{align*}
	Therefore, by the Lipschitz-continuity of $\grad f$,
	\begin{align*}
		\E\Bigl[ &\sup_{s \in [n,n+1]} \1_{\{T>s\}} |f(X_s)-f(X_n)| \Bigr] \le \E\Bigl[  \int_{n \wedge T}^{(n+1)\wedge T} |\grad f(X_u)|\,  |V_u| \, du \Bigr] \\
		&\le \E[\1_{\{T>n\}}|\grad f(X_n)|^2]^{1/2}  \E\Bigl[\sup_{s \in [n,n+1]} \1_{\{T>s\}}|V_s|^2\Bigr]^{1/2}\\
		& \quad +C_L' \E\Bigl[\sup_{s \in [n,n+1]} \1_{\{T>s\}}|V_s|^2\Bigr] \\
		&\le C \exp(-mn),
	\end{align*}
	for a constant $C>0$.
	
	Next, we prove the statement. 
	For $m'<m$, $\eps>0$ and $n \in \N$ consider the set
	$$
	\IB_n = \{T=\infty\} \cap \Bigl\{\sup_{t \ge n} \exp(m't)f(X_t)\ge \eps\Bigr\}.
	$$
	We use the Markov inequality, the Lipschitz continuity of $\grad f$ and (i) to get, for some $C \ge 0$, that 
	\begin{align*}
		\P(\IB_n) &\le \sum_{i=n}^\infty \P(\{T\ge i\} \cap \{\exp(m' (i+1)) f(X_i) \ge \eps/4\}) \\
		&+ \sum_{i=n}^\infty \P\Bigl(\{T\ge i+1\} \cap \Bigl\{ \sup\limits_{t \in [i,i+1]}\exp(m' (i+1)) (f(X_t)-f(X_i)) \ge \eps/4\Bigr\}\Bigr)\\
		& \le \sum_{i=n}^\infty \exp(m' (i+1))\frac{4}{\eps} \E[\1_{\{T \ge i\}} f(X_i)]\\
		& + \sum_{i=n}^\infty \exp(m' (i+1))\frac{4}{\eps} \E\Bigl[\sup_{t \in [i,i+1]}\1_{\{T \ge t\}} |f(X_t)-f(X_i)|\Bigr] \\
		& \le C \sum_{i=n}^\infty \exp((m'-m) i) \overset{n \to \infty}{\longrightarrow} 0.
	\end{align*}
	Hence,
	$$
	\P\Bigl(\{T=\infty\} \cap \Bigl\{\limsup_{t \to \infty} \exp(mt)f(X_t) \ge \eps\Big\}\Bigr) \le \P\Bigl(\bigcap_{n \in \N} \IB_n\Bigr) =0
	$$
	so that $\exp(m t)f(X_t)\to 0$ almost surely on $\{T=\infty\}$. 
\end{proof}

For the admissible friction parameters $\mu>\frac{C_L\sigma}{4L}$, we use Proposition~\ref{prop:SDErate} to yield a rate of convergence for the expected objective function value in dependency of the technical parameter $b$. In order to get an optimal value for the convergence rate, we optimize $m(b,\mu)$ defined in the following lemma over all admissible choices of $b$. 

\begin{lemma} \label{lem:constrained1}
	Let $L, \sigma > 0$. Let $T$ be an $(\cF_t)_{t \ge 0}$-stopping time such that for all $t\ge 0$
	\begin{align*} 
		2Lf(X_t) \le |\grad f(X_t)|^2 \quad \text{ and } \quad  \|\Sigma(X_t)\|_F^2 \le \sigma f(X_t), \quad \text{ on } \{T>t\}.
	\end{align*}
	Let $0<b<\frac{4L}{\sigma}$ and
	$
	\mu \in (\frac{C_L}{b}, \mu_-^b] \cup [\mu_+^b, b+2\frac{C_L}{b}),
	$
	where $\mu_{\pm}^b$ is given by \eqref{eq:12341}.
	If $\P(T = \infty)<\P(T>0)$ additionally assume that $\mu \le b+\frac{C_L}{b}$. Then, for all $\eps>0$ there exists a $C>0$ such that for all $t \ge 0$
	\begin{align} \label{eq:28362}
		\E[\1_{\{T>t\}} f(X_t)] \le C \exp(-m(b,\mu) t),
	\end{align}
	where
	\begin{align} \label{eq:238565242}
		\begin{split}
			m(b,\mu) = \begin{cases}
				2(\mu-\frac{C_L}{b}), & \text{ if } \mu < \frac{1}{3} (b+4\frac{C_L}{b}) \\
				b+2\frac{C_L}{b}-\mu, & \text{ if } \mu > \frac{1}{3} (b+4\frac{C_L}{b}) \\
				\frac 23(b+\frac{C_L}{b})-\eps & \text{ if } \mu = \frac{1}{3} (b+4\frac{C_L}{b}).
			\end{cases}
		\end{split}
	\end{align}
\end{lemma}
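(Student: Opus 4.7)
The plan is to apply Proposition~\ref{prop:SDErate} with the parametrization $a = b + 2\frac{C_L}{b} - \mu$, which was shown in the proof of Lemma~\ref{lem:2847} to satisfy the structural constraints \eqref{eq:128188}. Under this choice, all remaining work is just to compute and optimize the resulting rate $\min(a, \mu - a + b)$ and to deal with the polynomial correction term that arises at the boundary.

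First I would verify that the hypotheses of Proposition~\ref{prop:SDErate} are met. Recall from Lemma~\ref{lem:2847} that for $0 < b < \frac{4L}{\sigma}$ and $\mu \in (\frac{C_L}{b}, \mu_-^b] \cup [\mu_+^b, b + 2\frac{C_L}{b})$, the pair $(a,b)$ with $a = b + 2\frac{C_L}{b} - \mu$ satisfies \eqref{eq:128188}, and under the additional constraint $\mu \le b + \frac{C_L}{b}$ it also satisfies $ab \ge C_L$. So Proposition~\ref{prop:SDErate} applies with this $(a,b)$.

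Next I would compute the rate. With $a = b + 2\frac{C_L}{b} - \mu$, one has
\begin{align*}
\mu - a + b = 2\mu - 2\frac{C_L}{b} = 2\Bigl(\mu - \frac{C_L}{b}\Bigr),
\end{align*}
so $\min(a, \mu - a + b) = \min\bigl(b + 2\frac{C_L}{b} - \mu,\; 2(\mu - \frac{C_L}{b})\bigr)$. The two arguments of the minimum coincide precisely when $b + 2\frac{C_L}{b} - \mu = 2(\mu - \frac{C_L}{b})$, i.e. when $3\mu = b + 4\frac{C_L}{b}$. For $\mu < \frac{1}{3}(b + 4\frac{C_L}{b})$ the second expression is smaller, giving rate $2(\mu - \frac{C_L}{b})$; for $\mu > \frac{1}{3}(b + 4\frac{C_L}{b})$ the first is smaller, giving rate $b + 2\frac{C_L}{b} - \mu$. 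In both cases $a \ne \mu - a + b$, so Proposition~\ref{prop:SDErate}(i) yields a clean exponential bound $C\exp(-m(b,\mu)t)$, which gives \eqref{eq:28362}.

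Finally, I would handle the critical case $\mu = \frac{1}{3}(b + 4\frac{C_L}{b})$. Here $a = \mu - a + b$ and both equal $\frac{2}{3}(b + \frac{C_L}{b})$, so Proposition~\ref{prop:SDErate}(i) gives the weaker bound $C(1+t)\exp(-\frac{2}{3}(b + \frac{C_L}{b})t)$. For any $\eps > 0$ one has $(1+t) \le C_\eps \exp(\eps t)$, so absorbing the polynomial factor yields $\E[\1_{\{T>t\}}f(X_t)] \le C'\exp(-(\frac{2}{3}(b + \frac{C_L}{b}) - \eps)t)$, as required.

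There is no real obstacle: the argument is essentially a computation of $\min(a, \mu - a + b)$ under the specific substitution from Lemma~\ref{lem:2847}, together with the standard trick of absorbing a linear factor into an $\eps$-loss in the exponent at the crossover point. The only care needed is to make sure the admissibility range of $(b,\mu)$ in the hypothesis matches exactly the range on which Lemma~\ref{lem:2847}'s construction produces valid parameters for Proposition~\ref{prop:SDErate}.
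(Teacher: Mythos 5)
Your proposal is correct and follows essentially the same route as the paper's proof: set $a = b + 2\frac{C_L}{b}-\mu$, invoke the verification from Lemma~\ref{lem:2847} that this choice satisfies \eqref{eq:128188} (and $ab\ge C_L$ when needed), apply Proposition~\ref{prop:SDErate}, and evaluate $\min(a,\mu-a+b)$, absorbing the linear factor into an $\eps$-loss at the crossover point. You have simply spelled out the arithmetic of the minimum that the paper states without intermediate steps.
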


\begin{proof}
	Let $b$ and $\mu$ be as in the assumptions and set $a=b+2\frac{C_L}{b}-\mu$. Then, by Proposition~\ref{prop:SDErate} and Lemma~\ref{lem:2847}, we get exponential convergence of $(\E[\1_{\{T>t\}}f(X_t)])_{t\ge 0}$ with rate $m(b,\mu)$, where for $a \neq \mu-a+b$ we have
	$$
	m(b,\mu)=\min(a,\mu-a+b)=\min\Bigl(b+2\frac{C_L}{b}-\mu,2\Bigl(\mu-\frac{C_L}{b}\Bigr)\Bigr)
	$$
	and, otherwise, for all $\eps >0$, we can choose $m(b,\mu):=a-\eps$.
\end{proof}

In order to derive the optimal rate of convergence for arbitrary, fixed friction parameter $\mu>\frac{C_L \sigma}{4L}$, one would have to maximize $m(b,\mu)$ over all admissible parameters $b$. We proceed by optimizing over $\mu$ and $b$, simultaneously.

\begin{remark} \label{rem:constrained2}
	We optimize the rate $m(b,\mu)$ over the admissible choices of $b$ and $\mu$. First, we fix $b>0$ and note that, in the case $m(b,\mu)=2(\mu-\frac{C_L}{b})$, the rate is optimal for the largest admissible $\mu$. If $\Phi(b):=b^4+\frac 98 \sigma b^3+\Bigl(2C_L-\frac{9}{2}L\Bigr)b^2+C_L^2>0$ we have $\mu_-^b<\frac{1}{3} (b+4\frac{C_L}{b})$. Thus, taking $\mu=\mu_-^b$ we get
	$$
	m(b,\mu_-^b) = b+\frac{C_L}{b} -\sqrt{\Bigl(b+\frac{C_L}{b}\Bigr)^2-4L+b\sigma}.
	$$
	If $\Phi(b) \le 0$ we have $\mu_-^b \ge \frac{1}{3} (b+4\frac{C_L}{b})$. Thus, taking $\mu=\frac 13 (b+4\frac{C_L}{b})$ we get, for any $\eps >0$,
	$$
	m\Bigl(b,\frac 13\Bigl(b+4\frac{C_L}{b}\Bigr)\Bigr) = \frac 23 \Bigl(b+\frac{C_L}{b}\Bigr)-\eps.
	$$
	In the case $m(b,\mu)=b+2\frac{C_L}{b}-\mu$, the rate is optimal for the smallest admissible $\mu$. If $\Phi(b) \ge 0$ we take $\mu=\mu_+^b$ and get
	$$
	m(b,\mu_+^b) = \frac 12 \Biggl( b+\frac{C_L}{b} -\sqrt{\Bigl(b+\frac{C_L}{b}\Bigr)^2-4L+b\sigma} \Biggr).
	$$
	If $\Phi(b)<0$ we take $\mu=\frac 13(b+4\frac{C_L}{b})$ and get, for any $\eps>0$,
	$$
	m\Bigl(b,\frac 13 \Bigl(b+4\frac{C_L}{b}\Bigr)\Bigr) = \frac 23 \Bigl(b+\frac{C_L}{b}\Bigr)-\eps.
	$$
\end{remark}

\begin{proof}[Proof of Theorem~\ref{theoSDE2}]
	Note that $C_L^*\ge C_L$ such that $C_L^*$ is a Lipschitz constant for $\grad f$ as well. By the computations above, the assumptions of Proposition~\ref{prop:SDErate} are satisfies for $C_L$ replaced by $C_L^*$, $b=\sqrt{C_L^*}$, $a=3\sqrt{C_L^*}-\mu$ and $\mu = 2\sqrt{C_L^*} - \sqrt{C_L^* - L + \frac 14 \sqrt{C_L^*} \sigma} $. In particular, $ab \ge C_L^*$, since $\mu \le b+\frac{C_L^*}{b}$, and $\Phi(b)>0$, since $\sigma >0$. Thus, there exists a constant $C>0$ such that
	$$
	\E[\1_{\{T>t\}}f(X_t)] \le C \exp(-mt),
	$$
	where 
	$
	m = 2 (\mu-\frac{C_L^*}{b})= 2\sqrt{C_L^*} -\sqrt{4C_L^*-4L+\sqrt{C_L^*}\sigma}.
	$
\end{proof}

\begin{proof}[Proof of Corollary~\ref{cor:probSDE}]
	Let $r_0$ be sufficiently small such that $B_{r_0}(y) \subset \mathcal D$. Let $r_1<r_0$, $\tilde T'_{r_1}=\inf\{t \ge 0: \int_0^t |V_s| \, ds >r_0-r_1\}$ and note that $T'_{r_1} < T$, almost surely. Thus, we get by \eqref{eq:2374278624},
	\begin{align*}
		\P(T<\infty) &\le \P \Bigl( \int_0^{T} |V_s| \, ds \ge  r_0-r_1 \Bigr) \le \frac{1}{r_0-r_1} \E\Bigl[ \int_0^{T} |V_s| \, ds  \Bigr] \\
		& \le \frac{1}{ r_0-r_1} C(e_0,\varphi_0) \int_0^\infty \exp(-ms) \, ds,
	\end{align*}
	for an $m>0$ and a constant $C(e_0,\varphi_0)$ that only depends on $e_0= \E[\1_{\{T>0\}} E_0]$ and $\varphi_0= \E[\1_{\{T>0\}} f(X_0)]$ and satisfies $C(e_0,\varphi_0) \to 0$ as $(e_0,\varphi_0) \to 0$.
	Thus, for every $\eps>0$ there exists an $r_0>0$ such that $\P(T<\infty) \le \eps $.
\end{proof}

\subsection*{Acknowledgements}
The authors would like to thank Vitalii Konarovskyi for carefully proof-reading the article and his many valuable suggestions. Thanks to Benjamin Fehrman for fruitful discussions in the beginning of this project.
The authors were supported by the Deutsche Forschungsgemeinschaft (DFG, German Research Foundation) – SFB 1283/2 2021 – 317210226.   BG acknowledges support by the Max Planck Society through the Research Group "Stochastic Analysis in the Sciences (SAiS)".


\newcommand{\etalchar}[1]{$^{#1}$}

\end{document}